\numberwithin{equation}{section}
\newtheorem{theorem}{Theorem}[section]
\newtheorem{proposition}[theorem]{Proposition}
\newtheorem{corollary}[theorem]{Corollary}
\newtheorem{lemma}[theorem]{Lemma}
\theoremstyle{definition}
\newtheorem{remark}[theorem]{Remark}
\newtheorem{example}[theorem]{Example}
\newtheorem{definition}[theorem]{Definition}
\newcommand{\prinA}{\mathcal{A}_{\bullet}}
\newcommand{\Yhats}{\hat{Y}_{1}, \ldots, \hat{Y_{n}}}
\newcommand{\basering}{\mathbb{Z}[q^{\pm \frac{1}{2}}]}
\def\aa{\mathbf{a}}
\def\bb{\mathbf{b}}
\def\cc{\mathbf{c}}
\def\dd{\mathbf{d}}
\def\ee{\mathbf{e}}
\def\ff{\mathbf{f}}
\def\gg{\mathbf{g}}
\def\hh{\mathbf{h}}
\def\xx{\mathbf{x}}
\def\ZZ{\mathbb{Z}}
\def\CC{\mathbb{C}}
\def\PP{\mathbb{P}}
\def\QQ{\mathbb{Q}}
\def\TT{\mathbb{T}}
\def\Fcal{\mathcal{F}}
\def\Xcal{\mathcal{X}}
\def\Qsf{\QQ_{\,\rm sf}}
\def\Trop{\operatorname{Trop}}
\def\sgn{\operatorname{sgn}}
\def\rank{\operatorname{rank}}
\newcommand{\overunder}[2]{
\!\begin{array}{c}
\scriptstyle{#1}\\[-.1in]
-\!\!\!-\!\!\!-\\[-.1in]
\scriptstyle{#2}
\end{array}
\!
}
\renewcommand{\eqref}[1]{{\rm (\ref{#1})}}
\begin{document}


\title[F-polynomials in Quantum Cluster Algebras]
{F-polynomials in Quantum Cluster Algebras
}

\author{Thao Tran}
\address{Department of Mathematics, Northeastern University,
Boston, MA 02115, USA} \email{tran.thao1@neu.edu}

\subjclass[2000]{Primary
16S99, 
Secondary
05E15, 
20G42
}

\begin{abstract}
$F$-polynomials and $\gg$-vectors were defined by Fomin and Zelevinsky  to give a formula which expresses cluster variables in a cluster algebra in terms of the initial cluster data.  A quantum cluster algebra is a certain noncommutative deformation of a cluster algebra.   In this paper, we define and prove the existence of analogous quantum $F$-polynomials for quantum cluster algebras.  We prove some properties of quantum $F$-polynomials. In particular, we give a recurrence relation which can be used to compute them.  Finally, we compute quantum $F$-polynomials and $\gg$-vectors for a certain class of cluster variables, which includes all cluster variables in type $\mbox{A}_{n}$ quantum cluster algebras.
\end{abstract}

\date{April 21, 2009}

\thanks{Research supported by the NSF grants DMS-0500534 and DMS-0801187.}

\maketitle

\tableofcontents

\section{Introduction}

Cluster algebras were introduced by Fomin and Zelevinsky in \cite{ca1} in order to study total positivity and canonical bases in semisimple groups.  Let $m \geq n$ be positive integers.  Roughly speaking, a \emph{cluster algebra} is a subalgebra generated by a distinguished collection of  generators called \emph{cluster variables} inside of an ambient  field $\mathcal{F}$ which is isomorphic to the field of rational functions in $m$ independent variables.   To obtain these cluster variables, one begins with an initial \emph{seed}.  A \emph{seed} is a pair $(\tilde{\xx}, \tilde{B})$ such that $\tilde{\xx}$ is an $m$-tuple of elements from $\mathcal{F}$ with the first $n$ terms being cluster variables and the remaining $m - n$ terms being \emph{coefficient variables}, and such that $\tilde{B}$ is an $m \times n$ integer matrix whose  top $n \times n$ submatrix is skew-symmetrizable; the $m$-tuple $\tilde{\xx}$ is called the \emph{extended cluster} of the seed.   \emph{Seed mutations} are certain operations which transform a seed into another seed.   In mutating from one seed $(\tilde{\xx}, \tilde{B})$ to another seed $(\tilde{\xx}', \tilde{B}')$, one cluster variable $x$ is exchanged for another cluster variable $x'$; the elements $x, x'$ satisfy a certain \emph{exchange relation} which is of the form $xx' = M^{+} + M^{-}$, where $M^{+}, M^{-}$ are monomials on disjoint subsets of  variables from from $\tilde{\xx} - \{ x \}$.  The role of the matrix $\tilde{B}$ is to dictate exactly what these monomials $M^{+}, M^{-}$ are.   The set of cluster variables which generate a given cluster algebra is obtained by mutating the initial seed with all possible sequences of mutations and taking all cluster variables from the seeds that result.  

Quantum cluster algebras were defined by Berenstein and Zelevinsky in \cite{quantum}.  A \emph{quantum cluster algebra} is a certain noncommutative deformation of a cluster algebra with an additional generator $q$ lying in the center.  Under this deformation, each extended cluster $(x_{1}, \ldots, x_{m})$ in the cluster algebra is replaced by the $m$-tuple $(X_{1}, \ldots, X_{m})$, where the elements $X_{1}, \ldots, X_{m}$ now \emph{quasi-commute}, i.e., for each $1 \leq i, j \leq m$, there exists $\lambda_{ij} \in \ZZ$ such that $X_{i}X_{j} = q^{\lambda_{ij}}X_{j}X_{i}$.     The exchange relations are also altered to allow for the fact that extended cluster elements must quasi-commute.

In \cite{coefficients}, Fomin and Zelevinsky defined $F$-polynomials and $\gg$-vectors corresponding to an initial $n \times n$ skew-symmetrizable integer matrix $B^{0}$.  They gave recurrence relations for the $F$-polynomials and $\gg$-vectors which essentially only depend on $B^{0}$.  One of the main results of \cite{coefficients} was a formula expressing any cluster variable in terms of $F$-polynomials and $\gg$-vectors using only the initial cluster of $\mathcal{A}$.

The main purpose of this paper is to demonstrate the existence of \emph{quantum $F$-polynomials}, which is an analogue of $F$-polynomials in the quantum cluster algebra setting.  Quantum $F$-polynomials satisfy the property that any cluster variable in a quantum cluster algebra may be computed (up to a multiple of a power of $q$) in a formula with the appropriate quantum $F$-polynomial and $\gg$-vector using only the initial extended cluster.   It is conjectured (and proven in some cases) that this formula can be sharpened so that the multiple of $q$ does not appear (see Theorem \ref{thm:no-q-factor}).   By setting $q = 1$ in a quantum $F$-polynomial, we obtain the appropriate $F$-polynomial for (nonquantum) cluster algebras.

The organization of the paper is as follows.   In Section \ref{section:ca}, we recall the definition of cluster algebras.   In Section \ref{section:f-polys}, we recall the definition of $F$-polynomials and $\gg$-vectors as well as the aforementioned formula for cluster variables (Theorem \ref{thm:cluster-var-formula}).   Section \ref{section:quantum} is devoted to recalling the definition of quantum cluster algebras.    In Section \ref{section:quantum-f-polys}, (left) quantum $F$-polynomials are defined.  Theorem \ref{thm:quantum-fpoly} is devoted to proving their existence.    Properties of quantum $F$-polynomials are given in Section \ref{section:quantum-f-polys-props}.    Proposition \ref{prop:fpoly-right} shows how to easily compute "right" quantum $F$-polynomials once the "left" ones are known.  A recurrence relation for quantum $F$-polynomials is given in Theorem \ref{thm:quantum-fpoly-rec}.  The last section is devoted to computing quantum $F$-polynomials in a particular class of examples where the cluster variables correspond to induced trees of the quiver which can be defined using the initial matrix $B^{0}$.    We show in the subsection that this formula may be used to compute all quantum $F$-polynomials corresponding to any $n \times n$ exchange matrix $B^{0}$ of type $\mbox{A}_{n}$.  In addition, a formula for $\gg$-vectors in type $\mbox{A}_{n}$ is given.  In the sequel to this paper, formulas for $\gg$-vectors and quantum $F$-polynomials will be provided for the remainder of the classical types.

\section{Cluster Algebras of Geometric Type} \label{section:ca}
Following \cite[Section 2]{coefficients}, we give the definition of a cluster algebra of geometric type as well as some properties of these cluster algebras.  The proofs of any statements given in this section can be found in \cite[Section 2]{coefficients}.
\begin{definition} \label{def:semifield-tropical}
Let $J$ be a finite set of labels,
and let $\Trop (u_j: j \in J)$ be an abelian group (written
multiplicatively) freely generated by the elements $u_j \, (j \in J)$.
We define the addition~$\oplus$ in $\Trop (u_j: j \in J)$ by
\begin{equation}
\label{eq:tropical-addition}
\prod_j u_j^{a_j} \oplus \prod_j u_j^{b_j} =
\prod_j u_j^{\min (a_j, b_j)} \,,
\end{equation}
and call $(\Trop (u_j: j \in J),\oplus,\cdot)$ a \emph{tropical
 semifield}.   If $J$ is empty,  we obtain the trivial semifield consisting of a single element~$1$.   The group ring of $\Trop (u_j: j \in J)$ is the ring of Laurent polynomials in the variables~$u_j\,$.
\end{definition}

Fix two positive integers $m$, $n$ with $m \geq n$.  Let $\mathbb{P} = \Trop (x_{n + 1}, \ldots, x_{m})$, and let $\mathcal{F}$ be the field of rational functions in $n$ independent variables with coefficients in $\QQ\PP$, the field of fractions of the integral group ring $\ZZ\PP$.  (Note that the definition of $\mathcal{F}$ does not depend on the auxiliary addition $\oplus$ in $\mathbb{P}$.)  The group ring $\ZZ\mathbb{P}$ will be the ground ring for the cluster algebra $\mathcal{A}$ to be defined, and $\mathcal{F}$ will be the ambient field, with $n$ being the \emph{rank} of $\mathcal{A}$.

\begin{definition} \label{def:labeled-seed}
A \emph{labeled seed} in $\mathcal{F}$ is a pair $(\tilde{\xx}, \tilde{B})$ where
\begin{itemize}
  \item $\tilde{\xx} = (x_{1}, \ldots, x_{m})$, where $x_{1}, \ldots, x_{n}$ are algebraically independent over $\QQ\PP$, and $\mathcal{F} = \QQ\PP(x_{1}, \ldots, x_{n})$, and 
  \item $\tilde{B}$ is an $m \times n$ integer matrix such that the submatrix $B$ consisting of the top $n$ rows and columns of $\tilde{B}$ is skew-symmetrizable (i.e., $DB$ is skew-symmetric for some $n \times n$ diagonal matrix $D$ with positive integer diagonal entries).
\end{itemize}
We call $\tilde{\xx}$ the \emph{extended cluster} of the labeled seed, $(x_{1}, \ldots, x_{n})$ the \emph{cluster}, $\tilde{B}$ the \emph{exchange matrix}, and the matrix $B$ the \emph{principal part} of $\tilde{B}$.
\end{definition}
We fix some notation to be used throughout the paper.  For $x \in \QQ$,  
\begin{align*}
[x]_+ &= \max(x,0); \\
\sgn(x) &=
\begin{cases}
-1 & \text{if $x<0$;}\\
0  & \text{if $x=0$;}\\
 1 & \text{if $x>0$;}
\end{cases}\\
\end{align*}
Also, for $i, j \in \ZZ$, write $[i, j]$ for the set $\{ k \in \ZZ : i \leq k \leq j \}$.  In particular, $[i, j] = \varnothing$ if $i > j$.

\begin{definition} \label{def:matrix-mut} Let $k \in [1, n]$.  We say that an $m \times n$ matrix $\tilde{B}'$ is obtained from an $m \times n$ matrix $\tilde{B} = (b_{ij})$ by \emph{matrix mutation} in direction $k$ if the entries of $\tilde{B}'$ are given by 
\begin{equation}
\label{eq:matrix-mutation}
b'_{ij} =
\begin{cases}
-b_{ij} & \text{if $i=k$ or $j=k$;} \\[.05in]
b_{ij} + \sgn(b_{ik}) \ [b_{ik}b_{kj}]_+
 & \text{otherwise.}
\end{cases}
\end{equation}
\end{definition} 

\begin{definition} \label{def:seed-mut} Let $(\tilde{\xx}, \tilde{B})$ be a labeled seed in $\mathcal{F}$ as in Definition \ref{def:labeled-seed}, and write $\tilde{B} = (b_{ij})$.  The \emph{seed mutation} $\mu_{k}$ in direction $k$ transforms $(\tilde{\xx}, \tilde{B})$ into the labeled seed $\mu_{k}(\tilde{\xx}, \tilde{B}) = (\tilde{\xx}', \tilde{B}')$, where 
\begin{itemize}
   \item $\tilde{\xx}'=(x_1',\dots,x_m')$, where $x'_{j} = x_{j}$ for $j \neq k$, and 
      \begin{equation}
\label{eq:exchange-relation}
x'_k = x_k^{-1}\left(
\displaystyle\prod_{i=1}^m x_i^{[b_{ik}]_+}
+ \displaystyle\prod_{i=1}^m x_i^{[-b_{ik}]_+}\right) ,
\end{equation}
   \item $\tilde{B}'$ is obtained from $\tilde{B}$ by matrix mutation in direction $k$.
\end{itemize}
\end{definition}
One may check that the pair $(\tilde{\xx}', \tilde{B}')$ obtained is again a labeled seed.  Furthermore, the seed mutation $\mu_{k}$ is involutive, i.e., applying $\mu_{k}$ to $(\tilde{\xx}, \tilde{B})$ yields the original labeled seed $(\tilde{\xx}, \tilde{B})$.

\begin{definition} Let $\mathbb{T}_{n}$ be the $n$-regular tree whose edges are labeled with $1, \ldots, n$ in such a way that for each vertex, the $n$ edges emanating from that vertex each receive different labels.  Write $t \frac{k}{\hspace{1cm}} t'$ to indicate $t, t' \in \mathbb{T}_{n}$ are joined by an edge with label $k$.
\end{definition}

\begin{definition} \label{def:cluster-pattern} A \emph{cluster pattern} is an assignment of a labeled seed $(\tilde{\xx}_{t}, \tilde{B}_{t})$ to every vertex $t \in \mathbb{T}_{n}$ such that if $t \frac{k}{\hspace{1cm}} t'$, then the labeled seeds assigned to $t$, $t'$ may be obtained from one another by seed mutation in direction $k$.   Write $\tilde{\xx}_{t}  = (x_{1; t}, \ldots, x_{m; t})$, $\tilde{B}_{t}  =  (b^t _{ij})$, and denote by $B_{t}$ the principal part of $\tilde{B}_{t}$.  
\end{definition}

\begin{definition} For a given cluster pattern, write 
\begin{equation}
\Xcal = \{ x_{j; t} : j \in [1, n], t \in \mathbb{T}_{n} \}.
\end{equation}
The elements of $\Xcal$ are the \emph{cluster variables}.  The \emph{cluster algebra} $\mathcal{A}$ associated to this cluster pattern is the $\ZZ\PP$-subalgebra of $\mathcal{F}$ generated by all cluster variables.  That is, $\mathcal{A} = \ZZ\PP[\Xcal]$.
\end{definition}

\section{$F$-polynomials and $\gg$-vectors} \label{section:f-polys}

The reference for all of the information given in this section (except the definition and properties of extended $\gg$-vectors)  is \cite{coefficients}.   For this section, fix an $n \times n$ skew-symmetrizable integer matrix $B^{0}$ and an initial vertex $t_{0} \in \mathbb{T}_{n}$.  Assume that any cluster algebra $\mathcal{A}$ in this section has initial exchange matrix $\tilde{B}^{0}$ with principal part $B^{0}$.

\begin{definition} We say that a cluster pattern $t \mapsto (\tilde{\xx}_{t}, \tilde{B}_{t})$ (or its corresponding cluster algebra) has \emph{principal coefficients} at $t_{0}$ if $\tilde{\xx}_{t_{0}} = (x_{1}, \ldots, x_{2n})$ (i.e., $m = 2n$) and the exchange matrix at $t_{0}$ is the \emph{principal matrix} corresponding to $B_{0}$ given by  
\begin{equation}
\tilde{B}_{t_{0}} = \left( \begin{array}{c} B^{0} \\ I
                             \end{array} \right)
\end{equation}
where $I$ is the $n \times n$ identity matrix.  Denote the corresponding cluster algebra by $\mathcal{A}_{\bullet} = \mathcal{A}_{\bullet}(B^{0}, t_{0})$. 
\end{definition}

\begin{definition} Let $\mathcal{A}_{\bullet} = \prinA(B^{0}, t_{0})$ be the cluster algebra with principal coefficients, with labeled seed at $t_{0}$ written as 
\begin{eqnarray}  \label{eq:initial-seed}
\tilde{\xx}_{t_{0}} = (x_{1}, \ldots, x_{n}, y_{1}, \ldots, y_{n}) \\
B_{t_{0}} = (b^0_{ij})
\end{eqnarray}
Let $\QQ_{\mbox{sf}}(z_{1}, \ldots, z_{\ell})$ denote the set of all rational functions in $\ell$ independent variables $z_{1}, \ldots, z_{\ell}$ which can be expressed as subtraction-free rational expression in $z_{1}, \ldots, z_{\ell}$.
Observe that by iterating the exchange relations in (\ref{eq:exchange-relation}), any cluster variable $x_{\ell; t} \in \prinA$ can be expressed as a unique rational function in $x_{1}, \ldots, x_{n}, y_{1}, \ldots, y_{n}$ given as a subtraction-free rational expression.  Denote this rational function by 
\begin{eqnarray}
X_{\ell; t} = X^{B^{0}; t_{0}}_{\ell; t} \in \Qsf(x_{1}, \ldots, x_{n}, y_{1}, \ldots, y_{n}).
\end{eqnarray}
Let $F_{\ell; t} = F^{B^{0}; t_{0}}_{\ell; t} \in \QQ_{sf}(u_{1}, \ldots, u_{n})$ denote the rational expression obtained by setting $x_{i} = 1$ and $y_{i} = u_{i}$ for all $i$ in $X_{\ell; t}$:
\begin{eqnarray}
F_{\ell; t} = X_{\ell; t}(1, \ldots, 1, u_{1}, \ldots, u_{n}). 
\end{eqnarray}
\end{definition}
Using \cite[Proposition~11.2]{ca2}, which is a sharpened version of the "Laurent phenomenon" (see \cite[Theorem~3.1]{ca1}) for cluster algebras with principal coefficients, the next theorem shows that the $F_{\ell; t}$ functions are indeed polynomials:

\begin{theorem} Let $\prinA = \prinA(B^{0}, t_{0})$ be a cluster algebra with principal coefficients at $t_{0}$.  Suppose that the initial seed in $\prinA$ is given as at (\ref{eq:initial-seed}).  Then 
\begin{eqnarray}
X_{\ell; t} & \in & \ZZ[x_{1}^{\pm 1}, \ldots, x_{n}^{\pm 1}, y_{1}, \ldots, y_{n}],  \\
F_{\ell; t} & \in  & \ZZ[u_{1}, \ldots, u_{n}].
\end{eqnarray}
\end{theorem}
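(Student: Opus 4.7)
The plan is to deduce both containments as direct consequences of the sharpened Laurent phenomenon \cite[Proposition~11.2]{ca2}, exactly as the text before the theorem already hints. In the principal coefficients setup at $t_0$, the initial extended cluster is $(x_1, \ldots, x_n, y_1, \ldots, y_n)$, with $y_1, \ldots, y_n$ playing the role of the coefficient variables and with the bottom $n \times n$ block of $\tilde B_{t_0}$ equal to the identity matrix. The sharpened Laurent phenomenon says that every cluster variable, when expressed in terms of the initial data, is a Laurent polynomial in the cluster variables $x_1, \ldots, x_n$ whose coefficients are (integer) polynomials in the coefficient variables $y_1, \ldots, y_n$. This is exactly the first containment $X_{\ell; t} \in \ZZ[x_1^{\pm 1}, \ldots, x_n^{\pm 1}, y_1, \ldots, y_n]$.

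For the second containment I would proceed by direct substitution using the first. By the first part one may write
\[
X_{\ell; t} \;=\; \sum_{\aa \in \ZZ^n,\,\bb \in \ZZ_{\geq 0}^n} c_{\aa, \bb}\, \xx^{\aa}\, \yy^{\bb},
\]
with only finitely many $c_{\aa, \bb} \in \ZZ$ nonzero. Substituting $x_i = 1$ for all $i$ collapses the $\xx$-dependence and leaves $\sum_{\aa, \bb} c_{\aa, \bb}\, \yy^{\bb} \in \ZZ[y_1, \ldots, y_n]$; renaming $y_i = u_i$ yields $F_{\ell; t} \in \ZZ[u_1, \ldots, u_n]$, as desired.

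The actual work — and the step I would expect to be the main obstacle if one had to redo it — is the sharpened Laurent phenomenon itself. Once that is available off the shelf, the theorem is an essentially formal consequence. The proof of \cite[Proposition~11.2]{ca2} goes by induction on the distance from $t_0$ in $\mathbb{T}_n$: one uses the exchange relation \eqref{eq:exchange-relation} to pass between neighboring seeds, and the identity block in $\tilde B_{t_0}$ is what guarantees that each mutation introduces factors of the $y_i$ only with nonnegative exponents, so no denominator in the $y_i$ is ever created. Since this has already been carried out in \cite{ca2}, here we only need to cite it and append the one-line substitution argument above.
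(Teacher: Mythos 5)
Your proposal is correct and follows exactly the route the paper takes: the paper offers no separate argument for this theorem beyond citing the sharpened Laurent phenomenon \cite[Proposition~11.2]{ca2} for the first containment, with the second containment following by the specialization $x_i = 1$, $y_i = u_i$ just as you describe. Nothing is missing.
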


The $F$-polynomials may be computed using the following recurrence.

\begin{proposition}
\label{fpoly-rec}
Let $t \mapsto \tilde B_t=(b^t_{ij}) \, (t\in\TT_n)$ be the family of
$2n\times n$ matrices associated with the cluster algebra $\prinA(B^{0}, t_{0})$.  Then the polynomials
$F_{\ell;t} = F_{\ell;t}^{B_{t_0};t_0}(u_1,\dots,u_n)$
are uniquely determined by the initial conditions
\begin{equation}
\label{fpoly-init}
F_{\ell;t_0} = 1 \quad (\ell = 1,
\dots, n) \ ,
\end{equation}
together with the recurrence relations
\begin{align}
\label{fpoly-rec1-nq}
F_{\ell;t'}&= F_{\ell;t} \quad \text{for $\ell\neq
  k$;}\\
\label{fpoly-rec2}
F_{k;t'} &= \frac{
\prod_{i=1}^n F_{i;t}^{[b_{ik}^t]_+}\prod_{j=1}^n u_j^{[b_{n+j,k}^t]_+}
+
\prod_{i=1}^n F_{i;t}^{[-b_{ik}^t]_+}\prod_{j=1}^n u_j^{[-b_{n+j,k}^t]_+}}{F_{k; t}}\,,
\end{align}
for every edge $t \overunder{k}{} t'$ such that $t$ lies on
the (unique) path from $t_0$ to $t'$ in~$\TT_n\,$.
\end{proposition}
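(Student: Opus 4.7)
My plan is to deduce the stated recurrence by specializing the exchange relations that the rational functions $X_{\ell; t}$ already satisfy in $\prinA$, and then to secure uniqueness by an induction on distance from $t_0$ in the tree $\TT_n$. The argument therefore reduces to (i) verifying that the specializations $F_{\ell;t} = X_{\ell;t}(1,\dots,1,u_1,\dots,u_n)$ solve the initial-condition/recurrence system, and (ii) checking that this system has at most one solution.

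For (i) the base case is immediate from the definition of principal coefficients: $X_{\ell;t_0}=x_\ell$, so $F_{\ell;t_0}=1$. For the recursion step I would fix an edge $t\overunder{k}{}t'$ with $t$ on the path from $t_0$ to $t'$. Because $\prinA$ has principal coefficients, the coefficient variables $y_1,\dots,y_n$ play the role of the frozen generators $x_{n+1},\dots,x_{2n}$ throughout mutation. Writing out the exchange relation \eqref{eq:exchange-relation} at the seed $(\tilde\xx_t,\tilde B_t)$ in the ambient field then gives
\begin{equation*}
X_{k;t}\,X_{k;t'} \;=\; \prod_{i=1}^{n}X_{i;t}^{[b^t_{ik}]_+}\prod_{j=1}^{n}y_j^{[b^t_{n+j,k}]_+} \;+\; \prod_{i=1}^{n}X_{i;t}^{[-b^t_{ik}]_+}\prod_{j=1}^{n}y_j^{[-b^t_{n+j,k}]_+},
\end{equation*}
together with $X_{\ell;t'}=X_{\ell;t}$ for $\ell\neq k$. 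Setting $x_i=1$ and $y_j=u_j$ in both identities transports them verbatim into \eqref{fpoly-rec1-nq} and \eqref{fpoly-rec2}.

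For (ii) I would root $\TT_n$ at $t_0$ and induct on the distance $d(t_0,t')$. Each non-initial vertex $t'$ is joined to its unique parent $t$ by a single edge with some label $k$, and the recurrence then expresses every $F_{\ell;t'}$ in terms of the $F_{i;t}$, which are determined by the inductive hypothesis; uniqueness on the whole tree follows. The one step that is not purely formal is justifying the division by $F_{k;t}$ in \eqref{fpoly-rec2}, and this is where I expect the only (minor) obstacle: one must invoke the preceding theorem to know $F_{k;t}\in\ZZ[u_1,\dots,u_n]$, and then observe that $F_{k;t}$, being obtained from a subtraction-free expression by positive substitution, has a positive, hence nonzero, value at $u_1=\cdots=u_n=1$, so it is a nonzero polynomial and the right-hand side of \eqref{fpoly-rec2} is well defined.
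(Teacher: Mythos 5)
Your proposal is correct and is essentially the standard argument: the paper states this proposition without proof, deferring to \cite{coefficients}, where it is established exactly as you do, by specializing $x_i=1$, $y_j=u_j$ in the exchange relations of $\prinA(B^0,t_0)$ (legitimate because all expressions involved are subtraction-free) and then noting that the recurrence, together with the nonvanishing of $F_{k;t}$, determines the family uniquely by induction on the distance from $t_0$ in $\TT_n$.
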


In \cite[Section 6]{coefficients}, the following $\ZZ^{n}$-grading of $\prinA = \prinA(B^{0}, t_{0})$ was introduced:

\begin{proposition} \label{def:g-vecs} Define a $\ZZ^{n}$-grading of $\ZZ[x_{1}^{\pm 1}, \ldots, x_{n}^{\pm 1}, y_{1}^{\pm}, \ldots, y_{n}^{\pm}]$ by
\begin{eqnarray} 
\deg(x_{i}) = \ee_{i},  \hspace{1cm} \deg(y_{j}) = -\bb^{0}_{j},
\end{eqnarray}
where $\ee_{1}, \ldots, \ee_{n}$ are the standard basis vectors for $\ZZ^{n}$ and $\bb^{0}_{j}$ is the $j$th column of $B^{0}$.  Under this $\ZZ^{n}$-grading, the cluster algebra $\prinA(B^{0}, t_{0})$ is a $\ZZ^{n}$-graded subalgebra of $\ZZ[x_{1}^{\pm 1}, \ldots, x_{n}^{\pm 1}, y_{1}^{\pm}, \ldots, y_{n}^{\pm}]$, and every cluster variable in $\prinA(B^{0}, t_{0})$ is a homogeneous element.
\end{proposition}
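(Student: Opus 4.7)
The plan is to verify first that the Laurent ring $R := \ZZ[x_1^{\pm 1},\ldots,x_n^{\pm 1},y_1^{\pm 1},\ldots,y_n^{\pm 1}]$ carries the prescribed $\ZZ^n$-grading, which is automatic because the degrees are assigned to algebraically independent generators. Since the coefficient variables $y_j$ are themselves homogeneous generators of $\Aprin$ by construction, to conclude that $\Aprin$ is a $\ZZ^n$-graded subalgebra of $R$ it suffices to show that every cluster variable $x_{\ell;t}$ is homogeneous. I would prove this by induction on the distance from $t_0$ to $t$ in $\TT_n$, with the trivial base case $\deg(x_{\ell;t_0}) = \ee_\ell$.

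For the inductive step, consider an edge $t \overunder{k}{} t'$ with $t$ closer to $t_0$, and assume that each $x_{i;t}$ is homogeneous of some degree $\gg_{i;t} \in \ZZ^n$. The exchange relation \eqref{eq:exchange-relation} writes $x_{k;t'}$ as $x_{k;t}^{-1}(M_+ + M_-)$, where $M_+$ and $M_-$ are monomials in $x_{1;t},\dots,x_{n;t},y_1,\dots,y_n$ whose exponent vectors differ by the $k$-th column of $\tilde B_t$. Thus $x_{k;t'}$ is homogeneous precisely when $\deg(M_+) = \deg(M_-)$, which unwinds to the identity
\begin{equation}
\label{eq:plan-identity}
\sum_{i=1}^n b^t_{ik}\,\gg_{i;t} \;=\; \sum_{j=1}^n b^t_{n+j,k}\,\bb^0_j \;=\; B^0 \cc^t_k,
\end{equation}
where $\cc^t_k$ denotes the $k$-th column of the lower half $C_t$ of $\tilde B_t$. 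Equivalently, assembling the $g$-vectors into an $n\times n$ matrix $G_t$ with columns $\gg_{i;t}$, what is needed is the matrix identity $G_tB_t = B^0 C_t$ for every $t \in \TT_n$.

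The main obstacle is establishing this identity $G_tB_t = B^0 C_t$, and I would address it by folding it into the induction: assume inductively both that each $x_{i;t}$ is homogeneous and that $G_tB_t = B^0 C_t$ holds. The base case is immediate since $G_{t_0} = I$, $B_{t_0} = B^0$, and $C_{t_0} = I$. For the inductive step, the identity \eqref{eq:plan-identity} at column $k$ delivers the homogeneity of $x_{k;t'}$ together with the explicit formula
\begin{equation}
\gg_{k;t'} \;=\; -\gg_{k;t} + \sum_{i=1}^n [-b^t_{ik}]_+ \gg_{i;t} - \sum_{j=1}^n [-b^t_{n+j,k}]_+ \bb^0_j,
\end{equation}
obtained by matching $\gg_{k;t'} + \gg_{k;t}$ with $\deg(M_-)$. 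Plugging this formula, together with the mutation rule \eqref{eq:matrix-mutation} applied to both halves of $\tilde B_t$, into $G_{t'}B_{t'}$ and $B^0 C_{t'}$, the identity at $t'$ follows from that at $t$ by a column-by-column comparison, invoking the elementary relation $[a]_+ - [-a]_+ = a$. This verification is technical bookkeeping rather than conceptually difficult, and is the only point in the proof where any real calculation is required.
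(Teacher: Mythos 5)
The paper does not actually prove this proposition: it is recalled from \cite[Section~6]{coefficients}, where it is obtained as part of the $Y$- and $F$-polynomial machinery of that paper. So your argument has to stand on its own, and it does. The reduction of homogeneity at the edge $t \overunder{k}{} t'$ to $\deg(M_+)=\deg(M_-)$, i.e.\ to the $k$-th column of the matrix identity $G_tB_t = B^0C_t$, is correct, and your plan of carrying that identity along in the induction closes up: writing $B_t=(b^t_{ij})$ and $C_t=(c^t_{ij})=(b^t_{n+i,j})$, the column-$k$ case of the identity at $t'$ is just the sign flip, while for $\ell\neq k$ the verification reduces to the elementary identity
\[
\sgn(b^t_{ik})\,[b^t_{ik}b^t_{k\ell}]_+ \;-\; b^t_{k\ell}\,[-b^t_{ik}]_+ \;=\; b^t_{ik}\,[b^t_{k\ell}]_+,
\]
applied once to the top half of $\tilde B_t$ (paired with your formula for $\gg_{k;t'}$) and once to the bottom half; both sides then collapse to $B^0\bigl(\cc^t_\ell + [b^t_{k\ell}]_+\cc^t_k + b^t_{k\ell}[-\cc^t_k]_+\bigr)$. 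I checked this and it works with no appeal to sign-coherence, because you only ever use the $\epsilon=-$ form of the $\gg$-vector recurrence — which, note, is exactly \eqref{hvec-rec2b} with $\epsilon=-$, so your induction also re-derives Proposition \ref{g-vec-rec-b} in that case, and the identity $G_tB_t=B^0C_t$ you isolate is the standard relation between $g$- and $c$-vectors. Two small points worth making explicit: you should invoke the Laurent phenomenon so that "homogeneous as a rational function" (a ratio of homogeneous Laurent polynomials) yields "homogeneous element of $\ZZ[x_1^{\pm1},\ldots,y_n^{\pm1}]$"; and the well-definedness of $\gg_{i;t}$ as $\deg(x_{i;t})$ is part of what the induction delivers, so it should be stated as part of the inductive hypothesis rather than assumed. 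Compared with the route in \cite{coefficients} (which derives homogeneity from the separation formula and the observation that each $\hat y_j$ has degree $0$), your proof is more elementary and self-contained, at the cost of the column-by-column mutation computation.
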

The $\ZZ^{n}$-degree of the cluster variable $x_{\ell; t} \in \prinA(B^{0}, t_{0})$ will be denoted by $\gg_{\ell; t} = \gg_{\ell; t}^{B^{0}; t} = \left[ \begin{array}{c} g_{1} \\ \vdots \\ g_{n} \end{array} \right] \in \ZZ^{n}$; we refer to $\gg_{\ell; t}$ as the \emph{$\gg$-vector} of $x_{\ell; t}$.  

For any cluster algebra $\mathcal{A}$ such that the initial extended cluster at $t_{0}$ is given by $\tilde{\xx} = (x_{1}, \ldots, x_{m})$ and the exchange matrix at $t_{0}$ is $\tilde{B}^{0} = (b^{0}_{ij})$, let  

\begin{eqnarray} \label{def:yhat}
y_{j} = \prod_{i = n + 1}^{m} x_{i}^{b^{0}_{ij}},  \hspace{1cm} \hat{y}_{j} = \prod_{i = 1}^{m} x_{i}^{b^{0}_{ij}} \hspace{0.5cm}
\end{eqnarray}
for $j \in [1, n]$.

The next theorem shows that any cluster variable in $\mathcal{A}$ may be computed in terms of the initial extended cluster if the corresponding $F$-polynomial and $\gg$-vector are known.

\begin{theorem} \label{thm:cluster-var-formula} Let $\mathcal{A}$ be a cluster algebra such that the principal part of the exchange matrix at $t_{0}$ is $B^{0}$.  Using the notation just given for the initial seed at $\mathcal{A}$, any cluster variable $x_{\ell; t}$ in $\mathcal{A}$ may be expressed as 
\begin{eqnarray}
x_{\ell; t} = \displaystyle \frac{F^{B^{0}; t_{0}}_{\ell; t}(\hat{y}_{1}, \ldots, \hat{y}_{n})}{F^{B^{0}; t_{0}}_{\ell; t}|_{\mathbb{P}}(y_{1}, \ldots, y_{n})} x_{1}^{g_{1}} \cdots x_{n}^{g_{n}},
\end{eqnarray}
where $\gg_{\ell; t}^{B^{0}; t_{0}} = \left[ \begin{array}{c} g_{1} \\ \vdots \\ g_{n} \end{array} \right]$ and $\mathbb{P} = \Trop(x_{n + 1}, \ldots, x_{m})$. 

Furthermore, for $j \in [1, n]$, $u_{j}$ does not divide $F_{\ell; t}$.  Thus, in the cluster algebra $\prinA = \prinA(B^{0}, t_{0})$, 
\begin{eqnarray} \label{eq:cluster-var-fpoly}
x_{\ell; t} = F^{B^{0}; t_{0}}_{\ell; t}(\hat{y}_{1}, \ldots, \hat{y}_{n})x_{1}^{g_{1}}\cdots x_{n}^{g_{n}}.
\end{eqnarray}
\end{theorem}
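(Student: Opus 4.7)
The plan is to prove the formula first in the principal-coefficients case, where the stronger identity (\ref{eq:cluster-var-fpoly}) holds, and then to deduce the general formula by a specialization argument from $\prinA(B^{0}, t_{0})$ to $\mathcal{A}$.

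For the principal-coefficients case, I would exploit the homogeneity provided by Proposition \ref{def:g-vecs}. After the change of variables $y_{j} = \hat{y}_{j}\prod_{i} x_{i}^{-b^{0}_{ij}}$, the ambient ring $\ZZ[x_{1}^{\pm 1}, \ldots, x_{n}^{\pm 1}, y_{1}, \ldots, y_{n}]$ becomes $\ZZ[x_{1}^{\pm 1}, \ldots, x_{n}^{\pm 1}, \hat{y}_{1}, \ldots, \hat{y}_{n}]$, with $\deg(x_{i}) = \ee_{i}$ and $\deg(\hat{y}_{j}) = 0$; its degree-$\gg_{\ell;t}$ piece is precisely $x_{1}^{g_{1}}\cdots x_{n}^{g_{n}} \cdot \ZZ[\hat{y}_{1}, \ldots, \hat{y}_{n}]$. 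Since $x_{\ell;t}$ is homogeneous of degree $\gg_{\ell;t}$, we obtain $x_{\ell;t} = P(\hat{y}_{1}, \ldots, \hat{y}_{n})\, x_{1}^{g_{1}}\cdots x_{n}^{g_{n}}$ for some polynomial $P$. Specializing $x_{i} \mapsto 1$ sends $\hat{y}_{j} \mapsto y_{j}$ and kills the monomial $x_{1}^{g_{1}}\cdots x_{n}^{g_{n}}$, yielding $F_{\ell;t}(y_{1}, \ldots, y_{n}) = P(y_{1}, \ldots, y_{n})$ by the definition of $F_{\ell;t}$, forcing $P = F_{\ell;t}$.

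To pass to a general cluster algebra $\mathcal{A}$ sharing the principal part $B^{0}$, I would construct a specialization map sending each initial cluster variable of $\prinA$ to the corresponding one in $\mathcal{A}$ and each $y_{j} \in \prinA$ to $\prod_{i > n} x_{i}^{b^{0}_{ij}} \in \ZZ\PP$, and then verify that the map commutes with seed mutation along all of $\TT_{n}$. Applying this map to (\ref{eq:cluster-var-fpoly}) would then produce the general formula; the tropical denominator $F_{\ell;t}|_{\PP}(y_{1}, \ldots, y_{n})$ arises as the normalization that converts the principal-coefficient exchange relation (in which $y_{j}$-monomials are present explicitly as ring elements) into the general-coefficient exchange (\ref{eq:exchange-relation}), where part of the $y$-monomial is absorbed into the $\PP$-valued coefficient and must be divided out. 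For the final assertion that no $u_{j}$ divides $F_{\ell;t}$, it suffices to show $F_{\ell;t}(0, \ldots, 0) = 1$, which I would establish by induction on the distance from $t_{0}$ in $\TT_{n}$ using the recurrence (\ref{fpoly-rec2}): setting all $u_{i} = 0$ kills every $u$-monomial whose exponent is nonzero, so in each of the two numerator terms exactly one survives and evaluates to $1$ by the inductive hypothesis, while division by $F_{k;t}$ (also with constant term $1$) preserves the constant term.

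The main obstacle I expect is showing that the specialization map commutes with mutation: one must track how the Laurent-polynomial identities satisfied by the cluster variables in $\prinA$ descend to the correctly coefficient-normalized identities in $\mathcal{A}$, and the tropical factor $F_{\ell;t}|_{\PP}$ emerges precisely from reconciling these two normalizations. Once that reconciliation is controlled, the theorem follows formally from the homogeneity argument above.
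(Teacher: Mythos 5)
The paper itself does not prove Theorem \ref{thm:cluster-var-formula}; it is quoted from \cite{coefficients}, so I am comparing your proposal against the proof given there. Your homogeneity argument for the principal-coefficients identity \eqref{eq:cluster-var-fpoly} is correct and is essentially \cite[Corollary~6.3]{coefficients}: granting Proposition \ref{def:g-vecs}, the invertible monomial change of variables $(x,y)\mapsto (x,\hat y)$ makes each $\hat y_j$ of degree zero, the degree-$\gg_{\ell;t}$ component of $\ZZ[x^{\pm}, \hat y^{\pm}]$ is $x_1^{g_1}\cdots x_n^{g_n}\,\ZZ[\hat y^{\pm}]$, and specializing $x_i=1$ identifies the degree-zero factor with $F_{\ell;t}$. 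For the general-coefficient formula your specialization strategy has the right shape, but the step you defer --- that the specialization commutes with mutation once one divides by the tropical evaluation --- is not a routine verification: it is the entire content of the ``separation of additions'' theorem \cite[Theorem~3.7]{coefficients}, and as written your proposal names the obstacle without overcoming it.

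The concrete failure is in your proof of the non-divisibility claim. You reduce it to the stronger statement $F_{\ell;t}(0,\dots,0)=1$ and propose an induction via \eqref{fpoly-rec2}, asserting that at $u=0$ ``exactly one of the two numerator terms survives.'' The two coefficient monomials are $\prod_j u_j^{[b^t_{n+j,k}]_+}$ and $\prod_j u_j^{[-b^t_{n+j,k}]_+}$; for exactly one of them to equal $1$ at $u=0$ the vector $(b^t_{n+j,k})_{j}$ must be nonzero with all entries of one sign. That is sign-coherence of $c$-vectors, equivalent to the constant-term conjecture \cite[Conjecture~5.4]{coefficients}, which this paper itself treats as an open hypothesis (it is the assumption of Theorem \ref{thm:no-q-factor}, and the Remark after Lemma \ref{lemma:rho-zero} records that it is known only in the skew-symmetric and categorified settings). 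If the vector had mixed signs, both numerator terms would vanish at $u=0$ and your induction would yield nothing. The assertion actually needed is weaker and has a soft proof in the intended logical order: first establish the general formula of the theorem, then apply it to $\prinA(B^0,t_0)$ itself, where $\PP=\Trop(y_1,\dots,y_n)$ and $F_{\ell;t}|_{\PP}(y)=\prod_j y_j^{m_j}$ with $m_j$ the minimal exponent of $u_j$ in $F_{\ell;t}$; comparing with your homogeneity formula \eqref{eq:cluster-var-fpoly} forces $\prod_j y_j^{m_j}=1$, i.e.\ every $m_j=0$, which is exactly the statement that no $u_j$ divides $F_{\ell;t}$. So the non-divisibility should be deduced from the first display, not proved independently by an induction that secretly assumes sign-coherence.
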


Next, we introduce \emph{extended $\gg$-vectors} which generalize the $\gg$-vectors to any cluster algebra $\mathcal{A}$. 

\begin{definition} \label{def:extended-g-vec} Let $\mathcal{A}$ be any cluster algebra with initial extended cluster $\tilde{\xx} = (x_{1}, \ldots, x_{m})$, and let $\ell \in [1, n]$, $t \in \mathbb{T}_{n}$.   
Define $\gg_{\ell; t}^{\tilde{B}^{0}; t_{0}} = (g_{1}, \ldots, g_{m})$ to be the unique vector in $\ZZ^{m}$ such that the cluster variable $x_{\ell; t} \in \mathcal{A}$ satisfies
\begin{eqnarray} 
x_{\ell; t} = F_{\ell; t}^{\tilde{B}^{0}; t_{0}}(\hat{y}_{1}, \ldots, \hat{y}_{n})x_{1}^{g_{1}}\ldots x_{m}^{g_{m}}.
\end{eqnarray}
For $\ell \in [n + 1, m]$, define $\gg_{\ell; t}^{\tilde{B}^{0}; t_{0}} = \ee_{\ell} \in \ZZ^{m}$.  We call $\tilde{\gg}_{\ell; t} = \tilde{\gg}_{\ell; t}^{\tilde{B}^{0}; t_{0}}$ the \emph{extended $\gg$-vector}.  
\end{definition}

 Observe that  the existence of $\tilde{\gg}_{\ell; t}$ follows immediately from Theorem \ref{thm:cluster-var-formula}.   Explicitly, the first $n$ coordinates of $\tilde{\gg}_{\ell; t}$ are given by $\gg_{\ell; t}^{B^{0}; t_{0}}$, and the remaining $m - n$ coordinates are given by the exponents of $x_{n + 1}, \ldots, x_{m}$ in the expression 
\begin{eqnarray}
\frac{1}{F_{\ell; t}^{B^{0}; t_{0}}|_{\PP} (y_{1}, \ldots, y_{n})},
\end{eqnarray}
where $\PP = \Trop(x_{n + 1}, \ldots, x_{m}).$  In particular, $\gg_{\ell; t}^{B^{0}; t_{0}} = \tilde{\gg}_{\ell; t}^{\tilde{B}^{0}; t_{0}}$ when $\tilde{B}^{0}$ is principal.  (Here, we extend $\gg_{\ell; t}^{B^{0}; t_{0}} \in \ZZ^{n}$ to an element of $\ZZ^{2n}$ by appending $n$ 0's.)

The next proposition gives a recurrence relation for the extended $\gg$-vectors.    Let $\tilde{b}^{j}$ denote the $j$th column of the matrix $\tilde{B}^{0}$ $(j \in [1, n])$.  Let $\tilde{B}^{t} = (b^{t}_{ij})$ denote the matrix obtained from $\tilde{B}^{0}$ by mutating the matrix from $t_{0}$ to vertex $t \in \mathbb{T}_{n}$, and let $(b^{\bullet, t}_{ij})$ be the $2n \times n$ matrix obtained by mutating the principal matrix corresponding to $B^{0}$ from $t_{0}$ to $t$.  

\begin{proposition} \label{h-vec-props}
For $\ell \in [1, m]$, $t \in \mathbb{T}_{n}$, $\tilde{\gg}_{\ell; t}$ is given by the initial conditions
\begin{equation}
\label{hvec-init}
\tilde{\gg}_{\ell;t_{0}} = \ee_\ell \quad (\ell = 1,
\dots, m)
\end{equation}
together with the recurrence relations
\begin{eqnarray}
\label{hvec-rec1}
\tilde{\gg}_{\ell;t'} & = & \tilde{\gg}_{\ell;t} \quad \text{for $\ell\neq
  k$;}\\
\label{hvec-rec2}
\tilde{\gg}_{k;t'} & = & -\tilde{\gg}_{k;t} +\sum_{i=1}^m [-b^t_{ik}]_+ \tilde{\gg}_{i;t}
-\sum_{j=1}^n  [-b^{\bullet, t}_{n+j,k}]_+ \tilde{\textbf{b}}^{j}\,
 \,,
\end{eqnarray}
where  the (unique) path from $t_0$ to $t'$ in $\TT_n$ ends
with the edge $t \overunder{k}{} t'$. 
\end{proposition}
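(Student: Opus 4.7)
The plan is to argue by induction on the distance in $\TT_n$ from $t_0$ to $t'$. The two main tools are the exchange relation \eqref{eq:exchange-relation} at the edge $t \overunder{k}{} t'$ incident to $t'$, and the $F$-polynomial recurrence of Proposition \ref{fpoly-rec} with the substitution $u_j = \hat{y}_j$. Throughout I write $x^{\mathbf{v}} := \prod_{j=1}^m x_j^{v_j}$ for $\mathbf{v} \in \ZZ^m$, so that by Definition \ref{def:extended-g-vec} we have $x_{\ell; t} = F_{\ell; t}(\hat{y})\, x^{\tilde{\gg}_{\ell; t}}$ for $\ell \in [1, n]$; note also that $\hat{y}_j = x^{\tilde{\bb}^{j}}$ by \eqref{def:yhat}, where $\tilde{\bb}^{j}$ denotes the $j$th column of $\tilde{B}^{0}$.

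The initial condition \eqref{hvec-init} is immediate: $F_{\ell; t_0} = 1$ and $x_{\ell; t_0} = x_\ell$ force $\tilde{\gg}_{\ell; t_0} = \ee_\ell$ for $\ell \in [1, n]$, while for $\ell > n$ the claim is part of Definition \ref{def:extended-g-vec}. The first recurrence \eqref{hvec-rec1} likewise follows because $x_{\ell; t'} = x_{\ell; t}$ and $F_{\ell; t'} = F_{\ell; t}$ by \eqref{fpoly-rec1-nq} when $\ell \neq k$.

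For the main recurrence \eqref{hvec-rec2}, substitute the inductive identity $x_{i; t} = F_{i; t}(\hat{y})\, x^{\tilde{\gg}_{i; t}}$ (with $x_{i; t} = x^{\ee_i}$ for $i > n$) into the exchange relation to obtain
\[
x_{k; t}\, x_{k; t'} = \prod_{i=1}^{n} F_{i; t}(\hat{y})^{[b^t_{ik}]_+}\, x^{\mathbf{v}_+} + \prod_{i=1}^{n} F_{i; t}(\hat{y})^{[-b^t_{ik}]_+}\, x^{\mathbf{v}_-},
\]
with $\mathbf{v}_\pm = \sum_{i=1}^m [\pm b^t_{ik}]_+\, \tilde{\gg}_{i; t}$. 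In parallel, substituting $u_j = \hat{y}_j = x^{\tilde{\bb}^{j}}$ in \eqref{fpoly-rec2} yields
\[
F_{k; t}(\hat{y})\, F_{k; t'}(\hat{y}) = \prod_{i=1}^{n} F_{i; t}(\hat{y})^{[b^t_{ik}]_+}\, x^{\mathbf{w}_+} + \prod_{i=1}^{n} F_{i; t}(\hat{y})^{[-b^t_{ik}]_+}\, x^{\mathbf{w}_-},
\]
with $\mathbf{w}_\pm = \sum_{j=1}^n [\pm b^{\bullet, t}_{n+j, k}]_+\, \tilde{\bb}^{j}$. The left-hand side of the first display equals $F_{k; t}(\hat{y}) F_{k; t'}(\hat{y})\, x^{\tilde{\gg}_{k; t} + \tilde{\gg}_{k; t'}}$, so if the two right-hand sides differ by multiplication by a single monomial $x^{\mathbf{c}}$ then $\mathbf{c} = \tilde{\gg}_{k; t} + \tilde{\gg}_{k; t'}$; reading off the monomial factors gives $\mathbf{c} = \mathbf{v}_- - \mathbf{w}_-$, which rearranges to \eqref{hvec-rec2}.

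The main obstacle is therefore to verify that the two right-hand sides above do differ by a single Laurent monomial. Since each $F_{i; t}$ has nonnegative integer coefficients and constant term $1$ (provable by induction from \eqref{fpoly-init}--\eqref{fpoly-rec2}), the polynomial factors $\prod_i F_{i; t}(\hat{y})^{[\pm b^t_{ik}]_+}$ on the two sides are identical and nonzero, so it suffices to check the tropical-duality identity $\mathbf{v}_+ - \mathbf{v}_- = \mathbf{w}_+ - \mathbf{w}_-$, i.e.
\[
\sum_{i=1}^m b^t_{ik}\, \tilde{\gg}_{i; t} = \sum_{j=1}^n b^{\bullet, t}_{n+j, k}\, \tilde{\bb}^{j}.
\]
Its first $n$ coordinates are precisely the principal $\gg$-vector recurrence established in \cite{coefficients}, and its remaining $m - n$ coordinates can be verified directly by tracking the tropical-evaluation factor $F_{\ell; t}|_{\PP}(y_1, \ldots, y_n)$ from Theorem \ref{thm:cluster-var-formula} onto the coefficient variables $x_{n+1}, \ldots, x_m$.
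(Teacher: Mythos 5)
Your route is genuinely different from the paper's: the paper works with the element $Y_{k;t}$ of \cite[Section~3]{coefficients} and its formulas \eqref{eq:Y-F-2}--\eqref{eq:yY-geometric}, so that both terms of the exchange are packaged into the single factor $(Y_{k;t}+1)$ and only the $\epsilon=-$ monomials ever appear; you instead compare the two-term exchange relation with the two-term $F$-recurrence and reduce everything to the identity
\[
\sum_{i=1}^m b^t_{ik}\,\tilde{\gg}_{i;t} \;=\; \sum_{j=1}^n b^{\bullet,t}_{n+j,k}\,\tilde{\bb}^{j}.
\]
The reduction itself is fine: once this identity is granted, multiplying the right side of your second display by $x^{\mathbf{v}_- - \mathbf{w}_-}$ literally reproduces the right side of your first display, and the conclusion follows from the algebraic independence of $x_1,\dots,x_m$ together with $F_{i;t}(\hat y)=x_{i;t}\,x^{-\tilde{\gg}_{i;t}}\neq 0$. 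The genuine gap is that the displayed identity is where all the content of the proposition lives, and your justification of it does not hold up. Its first $n$ coordinates are \emph{not} ``precisely the principal $\gg$-vector recurrence'': the recurrence relates $\gg_{k;t'}$ to the $\gg_{i;t}$ across an edge, whereas you need the linear relation $\sum_{i=1}^n b^t_{ik}\gg_{i;t}=\sum_{j=1}^n b^{\bullet,t}_{n+j,k}\,\bb^0_j$ at a single vertex $t$ (the duality $G_tB_t=B^0C_t$). This does follow from Proposition \ref{g-vec-rec-b}, but only by taking the recurrence for \emph{both} signs $\epsilon=\pm$ and subtracting, which you must say. The remaining $m-n$ coordinates, which record the tropical evaluations $F_{i;t}|_{\PP}(y_1,\dots,y_n)$, are dispatched with ``can be verified directly by tracking''; that is exactly the nontrivial half and needs an actual argument. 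The clean fix is to invoke the formula from \cite[Section~3]{coefficients} expressing $\hat y_{k;t}$ in terms of the initial data, $\hat y_{k;t}=\prod_{j}\hat y_j^{\,b^{\bullet,t}_{n+j,k}}\prod_{i}F_{i;t}(\hat y)^{b^t_{ik}}$, and compare it with $\hat y_{k;t}=\prod_{i}x_{i;t}^{b^t_{ik}}$ after substituting your inductive factorizations; that yields the full identity in all $m$ coordinates at once. (This is the same input the paper uses, just organized around $\hat y_{k;t}$ instead of $Y_{k;t}+1$.)

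Separately, your parenthetical claim that nonnegativity of coefficients and constant term $1$ for $F_{i;t}$ are ``provable by induction from \eqref{fpoly-init}--\eqref{fpoly-rec2}'' is false: the constant-term statement is Conjecture 5.4 of \cite{coefficients}, which this paper explicitly treats as open in general (see the remark following Lemma \ref{lemma:rho-zero}), and positivity is likewise a deep theorem, not an easy induction. Fortunately your argument does not need either fact: the two ``polynomial factors'' are the same expression by inspection, and they are nonzero as noted above. Delete that sentence and supply the duality identity properly, and the proof goes through.
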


\begin{proof}    Equation (\ref{hvec-init}) is clear from the fact that $F_{\ell; t_{0}} = 1$ for all $\ell \in [1, n]$. 
Let $t \frac{k}{\hspace{1cm}} t'$ in $\mathbb{T}_{n}$.  If $\ell \neq k$, then $F_{\ell; t} = F_{\ell; t'}$, and (\ref{hvec-rec1}) follows.  Now consider $\tilde{\gg}_{k; t'}$.

 In section 3 of \cite{coefficients}, an element $Y_{k; t} \in \QQ_{\mbox{sf}}(u_{1}, \ldots, u_{n})$ was defined satisfying certain properties that we recall below.    By \cite[Proposition 3.12]{coefficients},
\begin{eqnarray}
\label{eq:Y-F-2}
F_{k;t'} = \frac{Y_{k;t} + 1}
{(Y_{k;t} + 1)|_{\Trop(u_1, \dots, u_n)}} \cdot
F_{k;t}^{-1} \prod_{i=1}^n F_{i;t}^{[-b^{t}_{ik}]_+} \ ,
\end{eqnarray}
where the right hand side is computed in the field $\QQ(u_{1}, \ldots, u_{n})$ of rational functions.  By \cite[(3.16)]{coefficients}, the cluster variable $x_{k; t'} \in \mathcal{A}$ is given by 

\begin{eqnarray}
\label{eq:x-X}
x_{k;t'} = \frac{(Y_{k;t} + 1)|_\Fcal(\hat y_1, \dots, \hat y_n)}
{(Y_{k;t} + 1)|_\PP(y_1, \dots, y_n)} \cdot
x_{k;t}^{-1} \prod_{i=1}^n x_{i;t}^{[-b_{ik}^{t}]_+} \ ,
\end{eqnarray} 
where $\PP = \Trop(x_{n + 1}, \dots, x_m)$, and $\hat{y}_{i}$, $y_{i}$ are elements of the ambient field $\mathcal{F}$ as defined at (\ref{def:yhat}).

By \cite[(3.14)]{coefficients}, 
\begin{eqnarray}
\label{eq:yY-geometric}
Y_{k;t}|_{\PP}(y_1, \dots, y_n) = \prod_{i=n+1}^m x_i^{b^t_{ik}} \in \mathcal{F}.
\end{eqnarray}
 
 By (\ref{eq:yY-geometric}), 
\begin{eqnarray}
(Y_{k; t} + 1)|_{\PP}(y_{1}, \ldots, y_{n}) = \prod_{i = n + 1}^{m} x_{i}^{\min(0, b_{ik}^{t})}.
\end{eqnarray} 
 
Since $\min(0, b_{ik}^{t}) = -[-b^{t}_{ik}]_{+}$, equation (\ref{eq:x-X}) may be rewritten as

\begin{eqnarray} \label{eqn:Y1}
x_{k; t'} = (Y_{k; t} + 1)(\hat{y}_{1}, \ldots, \hat{y}_{n}) x_{k; t}^{-1} \prod_{i = 1}^{m}x_{i; t}^{[-b_{ik}^{t}]_{+}},
\end{eqnarray}

By Definition \ref{def:extended-g-vec}, it follows from equation (\ref{eqn:Y1}) that 
\begin{eqnarray}
x_{k; t'} = P_{k; t}(\hat{y}_{1}, \ldots, \hat{y}_{n})x_{1}^{g_{1}'}\ldots x_{m}^{g_{m}'},
\end{eqnarray}
where 
\begin{eqnarray}
P_{k; t} = (Y_{k; t} + 1) F_{k; t}^{-1} \prod_{i = 1}^{n} F_{i; t}^{[-b_{ik}^{t}]_{+}}, \\ 
(g_{1}', \ldots, g_{m}') = -\tilde{\gg}_{k;t} +\sum_{i=1}^m [-b^t_{ik}]_+ \tilde{\gg}_{i;t}.
\end{eqnarray}

In the particular case, where $\mathcal{A}$ has principal coefficients, (\ref{eq:yY-geometric}) implies that 
\begin{eqnarray}
(Y_{k; t} + 1)|_{\Trop(y_{1}, \ldots, y_{n})}(y_{1}, \ldots, y_{n}) = \prod_{j = 1}^{n} y_{j}^{\min(0, b_{j + n, k}^{\bullet, t})}
\end{eqnarray}

Thus, 
\begin{eqnarray}
(Y_{k; t} + 1)|_{\Trop(u_{1}, \ldots, u_{n})} = \prod_{j = 1}^{n} u_{j}^{\min(0, b_{j + n, k}^{\bullet, t})} \in \QQ_{\mbox{sf}}(u_{1}, \ldots, u_{n}).
\end{eqnarray}

Equation (\ref{eq:Y-F-2}) may be rewritten as 
\begin{eqnarray}
\label{eqn:FY-formula} F_{k; t'} = (Y_{k; t} + 1) F_{k; t}^{-1} \prod_{i = 1}^{n} F_{i; t}^{[-b_{ik}^{t}]_{+}} \prod_{j = 1}^{n} u_{j}^{[-b_{j + n, k}^{\bullet, t}]_{+}} 
\end{eqnarray}

It follows that
\begin{eqnarray}
x_{k; t'} = F_{k; t'}(\hat{y}_{1}, \ldots, \hat{y}_{n})x_{1}^{g_{1}'}\ldots x_{m}^{g_{m}'} \prod_{j = 1}^{n} \hat{y}_{j}^{-[-b_{j + n, k}^{\bullet, t}]_{+}}, 
\end{eqnarray}
and equation (\ref{hvec-rec2}) follows from Definition \ref{def:extended-g-vec}.

\end{proof}

\begin{proposition} [{\cite[Proposition~6.6]{coefficients}}] \label{g-vec-rec-b} In the particular case when $\tilde{B}^{0}$ is principal and $\gg_{i; t}^{B^{0}; t_{0}} = \tilde{\gg}_{i; t}^{\tilde{B}^{0}; t_{0}}$ for all $i \in [1, n], t \in \mathbb{T}_{n}$, the recurrence (\ref{hvec-rec2}) may be replaced by the following recurrence:
\begin{eqnarray}
\label{hvec-rec2b}
\gg_{k;t'} & = & -\gg_{k;t} +\sum_{i=1}^{2n} [\epsilon b^t_{ik}]_+ \gg_{i;t}
-\sum_{j=1}^n  [\epsilon b^{\bullet, t}_{n+j,k}]_+ \tilde{\textbf{b}}^{j}\,
 \,,
\end{eqnarray}
for $\epsilon \in \{ +, - \}$. 
\end{proposition}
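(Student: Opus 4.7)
The plan is to observe that the case $\epsilon=-$ of (\ref{hvec-rec2b}) is just Proposition~\ref{h-vec-props} specialized to the principal setting: when $\tilde{B}^{0}$ is principal we have $m=2n$, $\tilde{B}^{t}=(b^{\bullet,t}_{ij})$, and $\tilde{\gg}_{i;t}^{\tilde{B}^{0};t_{0}}$ either equals $\gg_{i;t}^{B^{0};t_{0}}$ extended by zeros (for $i\in[1,n]$) or equals $\ee_{i}\in\ZZ^{2n}$ (for $i\in[n+1,2n]$). It therefore suffices to prove that the right-hand side of (\ref{hvec-rec2b}) is the same for $\epsilon=+$ as for $\epsilon=-$. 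Using the identity $[x]_{+}-[-x]_{+}=x$, this independence is equivalent to the linear identity
\begin{equation} \label{eq:plan-identity}
\sum_{i=1}^{2n} b^{t}_{ik}\, \gg_{i;t} \;=\; \sum_{j=1}^{n} b^{t}_{n+j,k}\, \tilde{\textbf{b}}^{j}
\end{equation}
in $\ZZ^{2n}$, for every $k\in[1,n]$ and $t\in\TT_{n}$.

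I would establish (\ref{eq:plan-identity}) by exploiting the $\ZZ^{n}$-grading of $\prinA(B^{0},t_{0})$ from Proposition~\ref{def:g-vecs}, under which every cluster variable is homogeneous. Applying this grading to the exchange relation
$$x_{k;t}\, x_{k;t'} \;=\; \prod_{i=1}^{2n} x_{i;t}^{[b^{t}_{ik}]_{+}} + \prod_{i=1}^{2n} x_{i;t}^{[-b^{t}_{ik}]_{+}}$$
forces the two monomials on the right-hand side to have equal $\ZZ^{n}$-degree. Using $\deg(x_{i;t})=\gg_{i;t}^{B^{0};t_{0}}$ for $i\in[1,n]$ and $\deg(x_{n+j;t})=\deg(y_{j})=-\bb^{0}_{j}$ for $j\in[1,n]$, subtracting these two degree expressions and collecting terms yields precisely the upper $n$ coordinates of (\ref{eq:plan-identity}). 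The lower $n$ coordinates are immediate from the principality of $\tilde{B}^{0}$: the coefficient variables are fixed under mutation, so $\gg_{i;t}=\ee_{i}$ for $i\in[n+1,2n]$, and the lower half of $\tilde{\textbf{b}}^{j}$ is $\ee_{j}$, so both sides of (\ref{eq:plan-identity}) have bottom $n$ entries equal to $(b^{t}_{n+1,k},\dots,b^{t}_{2n,k})^{T}$.

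The main obstacle will be bookkeeping between the $\ZZ^{n}$-valued $\gg$-vector $\gg^{B^{0};t_{0}}_{\ell;t}$ and its $\ZZ^{2n}$-valued extended cousin $\tilde{\gg}^{\tilde{B}^{0};t_{0}}_{\ell;t}$. In particular, one must verify that the identification stated after Definition~\ref{def:extended-g-vec} (appending zeros) is genuinely valid throughout the calculation; this rests on the fact that $F_{\ell;t}^{B^{0};t_{0}}|_{\PP}(y_{1},\dots,y_{n})=1$ for all $\ell\in[1,n]$ in the principal case, which in turn follows from $F_{\ell;t}$ having constant term $1$. Once these identifications are settled, the remainder is a short linear verification.
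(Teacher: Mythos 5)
Your argument is correct. Note that the paper itself gives no proof of Proposition \ref{g-vec-rec-b} -- it simply cites \cite[Proposition~6.6]{coefficients} -- and your reconstruction is essentially the argument from that source: reduce to the sign-independence of the right-hand side via $[x]_+-[-x]_+=x$, and obtain the resulting linear identity $\sum_{i=1}^{2n} b^t_{ik}\,\gg_{i;t}=\sum_{j=1}^n b^t_{n+j,k}\,\tilde{\bb}^{j}$ from the homogeneity (Proposition \ref{def:g-vecs}) of both sides of the exchange relation, the bottom $n$ coordinates being trivial. Two small remarks. First, the identification $\gg_{i;t}^{B^0;t_0}=\tilde{\gg}_{i;t}^{\tilde B^0;t_0}$ is already a \emph{hypothesis} of the proposition, so your final paragraph is not needed; and to the extent one wants to justify it independently, it follows from the unconditional fact that $u_j$ does not divide $F_{\ell;t}$ (stated in Theorem \ref{thm:cluster-var-formula}), not from the stronger constant-term-$1$ property, which in this paper is only known conditionally for general $B^0$. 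Second, for the sum over $i\in[n+1,2n]$ one must read $\gg_{i;t}$ as the extended $\gg$-vector $\ee_i$ of the coefficient variable, exactly as in Proposition \ref{h-vec-props}; with that reading the $\epsilon=-$ case is literally the specialization of Proposition \ref{h-vec-props}, as you say.
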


\section{Quantum Cluster Algebras} \label{section:quantum}
In this section, we define quantum cluster algebras and state properties to be used later.  See \cite[Sections 3 and 4]{quantum} for additional information.

Let $L$ be a lattice of rank $m$, with associated skew-symmetric bilinear form $\Lambda: L \times L \rightarrow \ZZ$.  We also introduce a formal variable $q$ which commutes with all other elements, and consider the ring $\ZZ[q^{\pm \frac{1}{2}}]$ of integer Laurent polynomials in the variable $q^{\frac{1}{2}}$.

\begin{definition} \cite[Definition 4.1]{quantum} \label{def:torus} The \emph{based quantum torus} associated with $L$ is the $\mathbb{Z}[q^{\pm \frac{1}{2}}]$-algebra $\mathcal{T}$ with distinguished $\mathbb{Z}[q^{\pm \frac{1}{2}}]$-basis $\{ X^{\ee} : \ee \in L  \}$ and multiplication given by 
\begin{equation}
X^{\ee}X^{\ff} = q^{\Lambda(\ee, \ff)/2}X^{\ee + \ff} \hspace{1cm} (\ee, \ff \in L).
\end{equation}
\end{definition}
It is easy to check that $\mathcal{T}$ is associative.  The basis elements satisfy the following commutation relations:
\begin{equation}
X^{\ee}X^{\ff} = q^{\Lambda(\ee, \ff)}X^{\ff}X^{\ee}.
\end{equation} 
In addition, we have
\begin{equation}
X^{0} = 1, \hspace{0.75cm} (X^{\ee})^{-1} = X^{-\ee} \hspace{0.75cm} (\ee \in L).
\end{equation}
The based quantum torus $\mathcal{T}$ is an Ore domain, which means that it is contained in $\mathcal{F}$, its skew-field of fractions.  The quantum cluster algebra to be defined below will be a $\ZZ[q^{\pm \frac{1}{2}}]$-subalgebra of $\mathcal{F}$.

Our first goal is to define the quantum analogue of a labeled seed.  First, we must make some definitions.

\begin{definition} \cite[Definition 4.3]{quantum} A \emph{toric frame} in $\mathcal{F}$ is a mapping $M: \ZZ^{m} \rightarrow \mathcal{F} - \{0\}$ of the form 
\begin{equation}
M(\cc) = \phi(X^{\eta(\cc)}),
\end{equation}
where $\phi$ is an automorphism of $\mathcal{F}$, and $\eta: \ZZ^{m} \rightarrow L$ is an isomorphism of lattices.
\end{definition}
By definition, the elements $M(\cc)$ form a basis of $\phi(\mathcal{T})$, which is an isomorphic copy of $\mathcal{T}$ in $\mathcal{F}$.  The commutation relations of these elements are given by 
\begin{eqnarray} \label{eq:toric-product}
M(\cc)M(\dd) & = & q^{\Lambda_{M}(\cc, \dd)/2}M(\cc + \dd), \\
M(\cc)M(\dd) & = & q^{\Lambda_{M}(\cc, \dd)}M(\dd)M(\cc), 
\end{eqnarray}
where the bilinear form $\Lambda_{M}$ on $\ZZ^{m}$ is obtained by transferring the form $\Lambda$ from $L$ via the lattice isomorphism $\eta$.  We also have 
\begin{equation}
M(0) = 1, \hspace{0.5cm} M(\cc)^{-1} = M(-\cc) \hspace{0.5cm} (\cc \in \ZZ^{m}).
\end{equation}
We use the same symbol $\Lambda_{M}$ to denote the skew-symmetric matrix whose entries are given by 
\begin{eqnarray} \label{eq:lambda-entries}
\lambda_{ij} = \Lambda_{M}(\ee_{i}, \ee_{j})
\end{eqnarray}
where $\ee_{1}, \ldots, \ee_{m}$ are the standard basis vectors of $\ZZ^{m}$.  For a given toric frame, write $X_{i} = M(e_{i})$ for $i \in [1, m]$.  The elements $X_{i}$ satisfy the \emph{quasi-commutation relations}:
\begin{equation} \label{eq:quasi-commutation}
X_{i}X_{j} = q^{\lambda_{ij}}X_{j}X_{i}.
\end{equation}
Using (\ref{eq:toric-product}) and (\ref{eq:lambda-entries}), it follows that for any $\cc = (c_{1}, \ldots, c_{m}) \in \ZZ^{m}$, 
\begin{eqnarray} \label{def:monomial}
M(\cc) = q^{\frac{1}{2} \sum_{\ell < k} c_{k}c_{\ell}\lambda_{k\ell}}X_{1}^{c_{1}}\ldots X_{m}^{c_{m}},
\end{eqnarray}
so that the toric frame $M$ is uniquely determined by the elements $X_{1}, \ldots, X_{m}$.

\begin{definition} \cite[Definition 3.1]{quantum}  \label{def:compatible} Let $\tilde{B} = (b_{ij})$ be an $m \times n$ integer matrix, and let $\Lambda = (\lambda_{ij})$ be a skew-symmetric $m \times m$ integer matrix.  We say that the pair $(\Lambda, \tilde{B})$ is \emph{compatible} if for every $j \in [1, n]$ and $i \in [1, m]$, we have 
\begin{eqnarray}
\displaystyle \sum_{k = 1}^{m} b_{kj}\lambda_{ki} = \delta_{ij}d_{j}
\end{eqnarray}
for some positive integers $d_{j}$ $(j \in [1, n])$.  In other words, the product $\tilde{B}^{T}\Lambda$ is equal to the $n \times m$ matrix $( D | 0 )$, where $D$ is a $n \times n$ diagonal matrix with positive integer diagonal entries $d_{1}, \ldots, d_{n}$. 
\end{definition}

\begin{proposition} \cite[Proposition 3.3]{quantum} \label{prop:compatible}
If a pair $(\Lambda, \tilde{B})$ is compatible, then the principal part of  $\tilde{B}$ is skew-symmetrizable.
\end{proposition}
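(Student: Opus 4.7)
The plan is to derive the skew-symmetry of $DB$ (with $B$ the principal part of $\tilde{B}$ and $D$ the same diagonal matrix appearing in the compatibility condition) by a short bilinear-form computation. The compatibility hypothesis tells us that
\begin{equation*}
\tilde{B}^{T}\Lambda = (D \mid 0),
\end{equation*}
with $D$ an $n\times n$ diagonal matrix whose entries $d_{1},\ldots,d_{n}$ are positive integers, and $0$ the $n\times(m-n)$ zero block. Recall also that the definition of skew-symmetrizable for an $n\times n$ integer matrix $B$ asks for a diagonal matrix with positive integer entries $D'$ such that $D'B$ is skew-symmetric; the natural candidate for $D'$ is the $D$ above.

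First, I would multiply the displayed equation on the right by $\tilde{B}$ and evaluate the result in two different ways. Writing $\tilde{B} = \bigl[\begin{smallmatrix}B\\ C\end{smallmatrix}\bigr]$ with $B$ the $n\times n$ principal part, the product $(D\mid 0)\tilde{B}$ collapses to $DB$, so
\begin{equation*}
\tilde{B}^{T}\Lambda\tilde{B} \;=\; DB.
\end{equation*}

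Next, I would observe that the left-hand side is manifestly skew-symmetric: taking transposes and using $\Lambda^{T}=-\Lambda$ (since $\Lambda$ is skew-symmetric by assumption) gives
\begin{equation*}
(\tilde{B}^{T}\Lambda\tilde{B})^{T} \;=\; \tilde{B}^{T}\Lambda^{T}\tilde{B} \;=\; -\tilde{B}^{T}\Lambda\tilde{B}.
\end{equation*}
Combining the two displays yields $(DB)^{T}=-DB$, i.e., $DB$ is skew-symmetric. Since $D$ is diagonal with positive integer entries, this is exactly the assertion that $B$ is skew-symmetrizable, completing the proof.

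There is no real obstacle: the argument is a one-line manipulation of $\tilde{B}^{T}\Lambda\tilde{B}$ once one notices that the compatibility condition lets one compute it as $DB$, while skew-symmetry of $\Lambda$ forces it to be skew-symmetric. The only thing to be careful about is the block decomposition of $\tilde{B}$ so that $(D\mid 0)\tilde{B} = DB$ picks out only the principal part.
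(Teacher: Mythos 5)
Your argument is correct and is essentially the proof given in the cited source (Berenstein--Zelevinsky, Proposition 3.3): multiplying the compatibility identity $\tilde{B}^{T}\Lambda = (D\mid 0)$ on the right by $\tilde{B}$ gives $DB = \tilde{B}^{T}\Lambda\tilde{B}$, which is skew-symmetric because $\Lambda$ is. Since $D$ is diagonal with positive integer entries, this is precisely the definition of $B$ being skew-symmetrizable, so nothing is missing.
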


\begin{definition} \cite[Definition 4.5]{quantum} \label{def:quantum-seed} A \emph{quantum seed} is a pair $(M, \tilde{B})$, where 
\begin{itemize}
  \item $M$ is a toric frame in $\mathcal{F}$, and 
  \item $\tilde{B}$ is an $m \times n$ integer matrix such that $(\Lambda_{M}, \tilde{B})$ is a compatible pair in the sense of Definition \ref{def:compatible}.
\end{itemize}
\end{definition}
Suppose that in the classical limit (i.e., taking $q = 1$), the elements $X_{1}, \ldots, X_{m}$ specialize to $x_{1}, \ldots, x_{m}$.  Then these elements $x_{1}, \ldots, x_{m}$ form a free generating set of the ambient field, so by Proposition \ref{prop:compatible}, we have that $(\tilde{\xx} = (x_{1}, \ldots, x_{m}), \tilde{B})$ is a labeled seed.

The next goal is to define mutation for quantum seeds.  First, we extend the definition of matrix mutations to compatible pairs.  Let $k \in [1, n]$, and pick a sign $\epsilon \in \{ \pm 1 \}$.  Denote by $E_{\epsilon}$ the $m \times m$ matrix with entries given by 
\begin{equation} \label{eq:E-epsilon}
e_{ij} = 
\begin{cases}
  \delta_{ij} & \text{if } j \neq k \\
  -1 & \text{if } i = j = k \\
  \max(0, -\epsilon b_{ik}) & \text{if } i \neq j = k. 
\end{cases}
\end{equation}
Set 
\begin{eqnarray} \label{eq:Lambda-mutation}
\Lambda' = E_{\epsilon}^{T}\Lambda E_{\epsilon}.
\end{eqnarray}
Then $\Lambda'$ is skew-symmetric.  Furthermore, one may verify that $\Lambda'$ is independent of sign $\epsilon$ and that $(\Lambda', \mu_{k}(\tilde{B}))$ is a compatible pair (see \cite[Proposition 3.4]{quantum}).  

\begin{definition} \cite[Definition 3.5]{quantum} Let $(\Lambda, \tilde{B})$ be a compatible pair, and let $k \in [1, n]$.  We say that $(\Lambda', \mu_{k}(\tilde{B}))$ (with $\Lambda'$ as above) is obtained by \emph{mutation} in direction $k$ from $(\Lambda, \tilde{B})$, and write $\mu_{k}(\Lambda, \tilde{B}) = (\Lambda', \mu_{k}(\tilde{B}))$. 
\end{definition}

Write
\begin{equation}
\left(\hspace{-0.1cm}\begin{array}{c}r \\ p\end{array}\hspace{-0.1cm}\right)_{t} = \displaystyle \frac{(t^{r} - t^{-r})\cdots(t^{r-p+1} - t^{-r+p-1})}{(t^{p} - t^{-p})\cdots(t - t^{-1}) }
\end{equation}
for the $t$-binomial coefficient.   The $t$-binomial coefficients satisfy the equation
\begin{eqnarray} \label{eqn:t-binomial-formula}
\prod_{p = 0}^{r - 1} (1 + t^{r - 1 - 2p}x) = \sum_{p = 0}^{r} \left(\hspace{-0.1cm}\begin{array}{c}r \\ p\end{array}\hspace{-0.1cm}\right)_{t} x^p
\end{eqnarray}

For mutations of toric frames, let $(M, \tilde{B})$ be a quantum seed, with $\tilde{B} = (b_{ij})$.  Fix an index $k \in [1, n]$ and a sign $\epsilon \in \{ \pm 1 \}$.  Define a mapping $M': \ZZ^{m} \rightarrow \mathcal{F} - \{0\}$ by setting, for $\cc = (c_{1}, \ldots, c_{n}) \in \ZZ^{m}$ such that $c_{k} \geq 0$,
\begin{equation} \label{eq:toric-frame-mutation}
M'(\cc) = \displaystyle \sum_{p = 0}^{c_{k}} \left(\hspace{-0.1cm}\begin{array}{c}c_{k}\\p  \end{array}\hspace{-0.1cm}\right)_{q^{d_{k}/2}} M(E_{\epsilon}\cc + \epsilon p \bb^{k}), \hspace{0.5cm} M'(-\cc) = M'(\cc)^{-1},
\end{equation} 
where $\bb^{k}$ denotes the $k$th column of $\tilde{B}$, and the matrix $E_{\epsilon}$ is given at (\ref{eq:E-epsilon}).

\begin{proposition} \cite[Proposition 4.7]{quantum}
\begin{enumerate}
\item[(1)] The mapping $M'$ is a toric frame which does not depend on the choice of sign $\epsilon$.
\item[(2)] The pair $(\Lambda_{M'}, \mu_{k}(\tilde{B}))$ is compatible and is obtained from $(\Lambda_{M}, \tilde{B})$ by mutation in direction $k$.
\item[(3)] $(M', \mu_{k}(\tilde{B}))$ is a quantum seed.
\end{enumerate}
\end{proposition}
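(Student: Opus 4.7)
My plan is to dispatch the three parts in order, with the bulk of the work concentrated in part (1). For the independence of $\epsilon$ in (\ref{eq:toric-frame-mutation}), I would first establish the change-of-sign identity
\[
E_{-\epsilon}\cc \,=\, E_{\epsilon}\cc \,+\, \epsilon\, c_k\, \bb^{k},
\]
which follows component-wise from $[x]_{+} - [-x]_{+} = x$. Substituting this into the definition of $M'_{-\epsilon}(\cc)$ and re-indexing the sum by $p \mapsto c_k - p$, then invoking the symmetry $\binom{c_k}{p}_{t} = \binom{c_k}{c_k - p}_{t}$ of $t$-binomial coefficients, converts the resulting expression back into $M'_{\epsilon}(\cc)$.

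For the toric-frame property, I would set $X_i' := M'(\ee_i)$ and first verify the quasi-commutation relations
\[
X_i' X_j' \,=\, q^{\lambda'_{ij}} X_j' X_i', \qquad \text{where} \quad \Lambda' := E_{\epsilon}^{T} \Lambda_{M} E_{\epsilon}.
\]
For $i, j \neq k$ we have $E_{\epsilon}\ee_i = \ee_i$, so $X_i' = X_i$ and the relation is immediate with $\lambda'_{ij} = \lambda_{ij}$. The essential case is $i \neq k$, $j = k$. Writing $X_k' = M(\cc_{+}) + M(\cc_{-})$ with $\cc_{\pm} = -\ee_k + \sum_{\ell \neq k}[\pm b_{\ell k}]_{+}\ee_{\ell}$, I would move $X_i$ past each summand via (\ref{eq:toric-product}) and show that
\[
\Lambda_{M}(\ee_i, \cc_{+}) \,=\, \Lambda_{M}(\ee_i, \cc_{-}) \,=\, \lambda'_{ik}.
\]
Their difference is $\sum_{\ell \neq k} b_{\ell k}\lambda_{i\ell} = -\sum_{\ell} b_{\ell k}\lambda_{\ell i} = -\delta_{ik}d_k = 0$ for $i \neq k$ by the compatibility of $(\Lambda_M, \tilde{B})$; that their common value equals $\lambda'_{ik}$ is then a direct evaluation of the $(i,k)$-entry of $E_\epsilon^{T}\Lambda_M E_\epsilon$.

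Once these quasi-commutations are in hand, together with the algebraic independence of $X_1', \ldots, X_m'$ (verifiable by taking the classical limit $q \to 1$ and recognizing the usual cluster mutation of Definition \ref{def:seed-mut}), I can define a $\basering$-algebra isomorphism from the quantum torus associated with $\Lambda'$ onto the subalgebra of $\mathcal{F}$ generated by the $X_i'$ by sending $X^{\ee_i} \mapsto X_i'$. Composing with the automorphism $\phi$ presenting $M$ yields the pair $(\phi', \eta')$ exhibiting $M'$ as a toric frame with $\Lambda_{M'} = \Lambda'$. Part (2) of the proposition then reduces to the matrix identity $\mu_k(\tilde{B})^{T}\Lambda' = [D\,|\,0]$, a straightforward consequence of $\tilde{B}^{T}\Lambda_M = [D\,|\,0]$ after substituting the definitions in terms of $E_\epsilon$; part (3) is then immediate from Definition \ref{def:quantum-seed}.

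The principal obstacle is the middle computation: the requirement that the two monomial summands of $X_k'$ quasi-commute with every $X_i$ ($i \neq k$) by the \emph{same} power of $q$ is precisely what the compatibility condition $\tilde{B}^{T}\Lambda_M = [D\,|\,0]$ delivers. Without this alignment, the proposed formula (\ref{eq:toric-frame-mutation}) could not define a toric frame at all, so this step is not merely a calculation but the structural heart of the proposition.
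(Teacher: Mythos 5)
Note first that this proposition is quoted in the paper from \cite[Proposition~4.7]{quantum} without proof, so there is no internal argument to compare against; I am judging your outline against what the cited result actually requires. Your treatment of the sign-independence is correct and is the standard one: the identity $E_{-\epsilon}\cc = E_{\epsilon}\cc + \epsilon c_k \bb^k$, the reindexing $p \mapsto c_k - p$, and the symmetry of the $t$-binomial coefficients do the job. Your identification of the compatibility condition as the reason the two summands $M(\cc_+)$, $M(\cc_-)$ of $X_k'$ quasi-commute with each $X_i$ ($i\neq k$) by the same power of $q$ is also correct, since $\cc_+-\cc_-=\bb^k$ and $\Lambda_M(\ee_i,\bb^k)=-\delta_{ik}d_k$.

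There is, however, a genuine gap in your argument that $M'$ is a toric frame. Establishing that $X_1',\dots,X_m'$ satisfy the quasi-commutation relations of $\Lambda'$ and are independent produces \emph{some} toric frame $N$ with $N(\ee_i)=X_i'$; it does not show that $N$ coincides with the map $M'$ defined by \eqref{eq:toric-frame-mutation}. A toric frame is determined by its values on all of $\ZZ^m$ via \eqref{def:monomial}, so you must verify that for every $\cc$ with $c_k\geq 0$ the explicit sum
\begin{equation*}
\sum_{p=0}^{c_k}\left(\hspace{-0.1cm}\begin{array}{c}c_k\\p\end{array}\hspace{-0.1cm}\right)_{q^{d_k/2}} M(E_{\epsilon}\cc+\epsilon p\bb^{k})
\end{equation*}
equals $q^{\frac12\sum_{\ell<j}c_jc_\ell\lambda'_{j\ell}}(X_1')^{c_1}\cdots(X_m')^{c_m}$, and that the clause $M'(-\cc)=M'(\cc)^{-1}$ is consistent with this. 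This is the computational heart of the proposition and is exactly where the $q$-binomial coefficients come from: writing $X_k'=M(\cc_-)\bigl(1+q^{-d_k/2}M(\bb^k)\bigr)$ and using that $M(\cc_-)$ and $M(\bb^k)$ quasi-commute by $q^{d_k}$ (again by compatibility), the power $(X_k')^{c_k}$ expands via the $t$-binomial formula \eqref{eqn:t-binomial-formula} into precisely the displayed sum. Without this step your argument establishes only that the restriction of $M'$ to the standard basis extends to a toric frame, not that $M'$ itself is one. (Your appeal to the classical limit for independence is also loose in the skew-field setting --- the cited source instead exhibits the automorphism $\phi'$ directly --- but that is a matter of rigor rather than a wrong idea.) Parts (2) and (3) are fine as reductions to the matrix identity $\mu_k(\tilde B)^T\Lambda'=(D\,|\,0)$ and to the definitions.
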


This proposition justifies the next definition: 

\begin{definition} \cite[Definition 4.8]{quantum} Let $(M, \tilde{B})$ be a quantum seed, and let $k \in [1, n]$.  Suppose that $M'$ is given at (\ref{eq:toric-frame-mutation}) and $\tilde{B}' = \mu_{k}(\tilde{B})$.  Then we say that the quantum seed $(M', \tilde{B}')$ is obtained from $(M, \tilde{B})$ by \emph{mutation} in direction $k$, and write $(M', \tilde{B}') = \mu_{k}(M, \tilde{B})$.
\end{definition}

\emph{Quantum cluster patterns} may be defined in a completely analogous way to cluster patterns by simply replacing the labeled seeds $(\xx_{t}, \tilde{B}_{t})$ by the quantum seeds $(M_{t}, \tilde{B}_{t})$ in Definition \ref{def:cluster-pattern}.  In this case, we write $X_{j; t} = M_{t}(\ee_{j})$ for $j \in [1, m]$, $t \in \mathbb{T}_{n}$.  The \emph{cluster variables} are the elements $X_{j; t}$ for $j \in [1, n]$, $t \in \mathbb{T}_{n}$.  Observe that $X_{j; t} = X_{j; t_{0}}$ for $i \in [n + 1, m]$, $j \in \mathbb{T}_{n}$; these are the $m - n$ \emph{coefficient variables} which do not depend on the seed $t$.  The \emph{cluster} at the seed $t$ is $(X_{1; t}, \ldots, X_{n; t})$, and the \emph{extended cluster} is $(X_{1; t}, \ldots, X_{m; t})$.

The next proposition provides the analogue in quantum cluster algebras of the exchange relation given at (\ref{eq:exchange-relation}).

\begin{proposition} \cite[Proposition 4.9]{quantum}  \label{prop:quantum-exchrel} Let $(M, \tilde{B})$ be a quantum seed, and suppose the quantum seed $(M', \tilde{B}')$ is obtained from $(M, \tilde{B})$ by mutation in direction $k \in [1, n]$.  For $i \in [1, n]$, set $X_{i} = M(\ee_{i})$, $X'_{i} = M'(\ee_{i})$.  Then $X'_{i} = X_{i}$ for $i \neq k$, and 
\begin{equation}
X'_{k} = M(-\ee_{k} + \displaystyle \sum_{i \in [1, m]} [b_{ik}]_{+} \ee_{i}) + M(-\ee_{k} + \displaystyle \sum_{i \in [1, m]} [-b_{ik}]_{+} \ee_{i}).
\end{equation}
\end{proposition}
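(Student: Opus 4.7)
The plan is to directly unfold the definition \eqref{eq:toric-frame-mutation} of $M'$ at the standard basis vectors $\ee_i$ for $i \in [1,n]$ and compare with the stated formula. I will choose the sign $\epsilon = +1$ throughout; by part (1) of the proposition immediately preceding the statement, the answer is independent of this choice.

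For $i \neq k$, the $k$th coordinate of $\ee_i$ vanishes, so the sum in \eqref{eq:toric-frame-mutation} collapses to the single term with $p = 0$, yielding $M'(\ee_i) = M(E_+ \ee_i)$. Direct inspection of \eqref{eq:E-epsilon} shows that the $i$th column of $E_+$ is $\ee_i$ for $i \neq k$, so $M'(\ee_i) = M(\ee_i) = X_i$, which settles the first half of the statement. For $i = k$, we have $c_k = 1$, and the two $q^{d_k/2}$-binomial coefficients $\binom{1}{0}$ and $\binom{1}{1}$ both equal $1$, so
\[
M'(\ee_k) \;=\; M(E_+ \ee_k) \,+\, M(E_+ \ee_k + \bb^k).
\]
By \eqref{eq:E-epsilon}, the $k$th column of $E_+$ is $-\ee_k + \sum_{i \in [1,m]} [-b_{ik}]_+\, \ee_i$, where the index $i = k$ may be included freely because $b_{kk} = 0$ (a consequence of the principal part of $\tilde{B}$ being skew-symmetrizable, as recorded in Proposition \ref{prop:compatible}). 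This produces the second summand of the claimed formula verbatim. For the first summand, apply the coordinatewise identity $[-x]_+ + x = [x]_+$ to rewrite $E_+ \ee_k + \bb^k = -\ee_k + \sum_{i \in [1,m]} [b_{ik}]_+\, \ee_i$, matching the stated expression.

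The statement is essentially a bookkeeping exercise that unfolds the definitions, and there is no substantive obstacle. The only points that require care are using $b_{kk} = 0$ to include the index $i = k$ in the two sums so as to match the formula as written, and observing that working with $\epsilon = -1$ instead would present the two summands in the reversed order; this symmetry is precisely the content of part~(1) of the cited proposition and so requires no separate verification here.
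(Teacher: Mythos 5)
Your verification is correct and complete: the paper itself only cites this result from Berenstein--Zelevinsky without proof, and your direct unfolding of \eqref{eq:toric-frame-mutation} at the basis vectors, using $\binom{1}{0}_{q^{d_k/2}}=\binom{1}{1}_{q^{d_k/2}}=1$, the identity $[-x]_++x=[x]_+$, and $b_{kk}=0$, is exactly the standard argument. Nothing is missing.
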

Finally, we are ready for the definition of quantum cluster algebra.  Observe that one consequence of Proposition \ref{prop:quantum-exchrel} is that in a given quantum cluster pattern, $X_{j; t} = X_{j; t'}$ for any $j \in [n + 1, m]$, $t, t' \in \mathbb{T}_{n}$.  Put $X_{j} = X_{j; t}$ for $j \in [n + 1, m]$, and write $\mathcal{X} = \{ X_{j; t} : j \in [1, n], t \in \mathbb{T}_{n} \}$ for the collection of all cluster variables.

\begin{definition} \cite[Definition 4.12]{quantum} \label{def:quantum-cluster-alg} For a given  quantum cluster pattern $t \mapsto (M_{t}, \tilde{B}_{t})$, the associated \emph{quantum cluster algebra} $\mathcal{A}$ is the $\ZZ[q^{\pm \frac{1}{2}}, X_{n + 1}^{\pm 1}, \ldots, X_{m}^{\pm 1}]$-subalgebra of the ambient skew-field $\mathcal{F}$, generated by the elements of $\mathcal{X}$.
\end{definition}

The next result will be referred to as the \textit{quantum Laurent phenomenon}:

\begin{theorem} \cite[Corollary 5.2]{quantum} \label{thm:quantum-laurent} The cluster algebra $\mathcal{A}$ as above is contained in $\ZZ[q^{\pm \frac{1}{2}}, X_{1}^{\pm 1}, \ldots, X_{m}^{\pm 1}]$.
\end{theorem}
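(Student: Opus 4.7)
The plan is to adapt the upper bound construction from the classical Laurent phenomenon (\cite{ca1}, \cite{ca2}) to the quantum setting. For each quantum seed $(M_{t}, \tilde{B}_{t})$ attached to a vertex $t \in \mathbb{T}_{n}$, let $\mathcal{T}_{t} \subset \mathcal{F}$ denote the based quantum torus with basis $\{M_{t}(\mathbf{c}) : \mathbf{c} \in \ZZ^{m}\}$, which equals $\ZZ[q^{\pm 1/2}]\langle X_{1;t}^{\pm 1}, \ldots, X_{m;t}^{\pm 1}\rangle$. Define the \emph{quantum upper cluster algebra} at $t_{0}$ to be
\[
\mathcal{U}(t_{0}) \;=\; \mathcal{T}_{t_{0}} \;\cap\; \bigcap_{k=1}^{n} \mathcal{T}_{t_{k}},
\]
where $t_{k}$ is the vertex of $\mathbb{T}_{n}$ adjacent to $t_{0}$ along the edge labeled $k$. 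The final goal reduces to proving (i) $\mathcal{A} \subset \mathcal{U}(t_{0})$ and (ii) $\mathcal{U}(t) = \mathcal{U}(t_{0})$ for every $t \in \mathbb{T}_{n}$, since the containment $\mathcal{U}(t_{0}) \subset \mathcal{T}_{t_{0}}$ is built in by definition.

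The first step, which is immediate, is to verify that each once-mutated cluster variable $X_{k;t_{k}}$ lies in $\mathcal{T}_{t_{0}}$. This follows directly from the quantum exchange relation of Proposition \ref{prop:quantum-exchrel}: the right-hand side $M_{t_{0}}(-\mathbf{e}_{k} + \sum_{i}[b_{ik}]_{+}\mathbf{e}_{i}) + M_{t_{0}}(-\mathbf{e}_{k} + \sum_{i}[-b_{ik}]_{+}\mathbf{e}_{i})$ is a sum of two Laurent monomials (with coefficients in $\ZZ[q^{\pm 1/2}]$) in the initial quasi-commuting variables $X_{1;t_{0}}, \ldots, X_{m;t_{0}}$.

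The heart of the argument is mutation invariance of the upper cluster algebra: $\mathcal{U}(t_{0}) = \mathcal{U}(t_{k})$ for every $k \in [1,n]$. My plan here is to establish a quantum analogue of the ``starfish'' lemma of \cite{ca2}. The key inputs are that each quantum torus $\mathcal{T}_{t}$ is an Ore domain, that the generators $X_{1;t}, \ldots, X_{m;t}$ admit a unique factorization property (up to powers of $q^{1/2}$), and that the exchange relation exhibits $X_{k;t_{0}}$ and $X_{k;t_{k}}$ as a pair of ``coprime'' exchange elements. Granted mutation invariance, I then argue by induction on the distance $d(t_{0}, t)$ in $\mathbb{T}_{n}$: a cluster variable $X_{\ell;t}$ belongs to $\mathcal{T}_{t} \subset \mathcal{U}(t) = \mathcal{U}(t_{0}) \subset \mathcal{T}_{t_{0}}$, which is exactly the claim.

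The main obstacle is the noncommutative upper bound / starfish lemma. Because the tori $\mathcal{T}_{t_{0}}$ and $\mathcal{T}_{t_{k}}$ only quasi-commute with each other, one cannot simply take intersections of commutative Laurent rings; instead one must track powers of $q^{1/2}$ throughout and invoke the compatibility condition $\tilde{B}^{T}\Lambda = (D \mid 0)$ from Definition \ref{def:compatible} to check that the old variables $X_{i;t_{0}}$ (for $i \neq k$) quasi-commute with the new variable $X_{k;t_{k}}$ in exactly the right way. This is precisely what lets one extend a Laurent expansion for an element of $\mathcal{U}(t_{0})$ into one over $\mathcal{T}_{t_{k,j}}$ for any further mutation direction $j$, and is the step that requires the most delicate bookkeeping. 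Everything else in the argument is a faithful noncommutative translation of the classical proof.
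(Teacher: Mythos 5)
First, a point of reference: the paper does not prove Theorem \ref{thm:quantum-laurent} at all --- it is imported verbatim as Corollary 5.2 of \cite{quantum}, so there is no internal proof to compare against. Your outline is in substance the strategy of the original Berenstein--Zelevinsky argument: define the quantum upper bound $\mathcal{U}(t_0)$ as the intersection of the initial quantum torus with the $n$ adjacent ones, check $\mathcal{A} \subset \mathcal{U}(t_0)$ via the exchange relation, and deduce the Laurent phenomenon from invariance of $\mathcal{U}$ under mutation. So the plan is the right one.

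The gap is that the plan's central claim --- mutation invariance of the quantum upper bound, your ``quantum starfish lemma'' --- is announced rather than proven, and it carries essentially all of the content of the theorem. To close it you need two concrete ingredients. First, a description of $\mathcal{T}_{t_0} \cap \mathcal{T}_{t_k}$ as the $\ZZ[q^{\pm 1/2}]$-span of the monomials $M_{t_0}(\cc)$ with $c_k \geq 0$ together with monomials involving nonnegative powers of $X_{k;t_k}$; this is where one uses that $X_{k;t_0}X_{k;t_k}$ is a sum of two monomials in the remaining variables and that the quantum torus generated by the $X_{i;t_0}$ with $i \neq k$ is a domain. Second, for invariance under a further mutation in direction $j \neq k$, one needs the two exchange monomials $M(\sum_i [b_{ij}]_+ \ee_i)$ and $M(\sum_i [-b_{ij}]_+ \ee_i)$ to be coprime in the appropriate noncommutative sense; in the quantum setting this is automatic because the compatibility condition $\tilde{B}^T\Lambda = (D\,|\,0)$ with $D$ invertible forces $\tilde{B}$ to have full rank $n$, a genuine simplification over the classical upper-bounds argument of \cite{ca2} that is worth stating explicitly rather than folding into ``bookkeeping.'' As written, neither ingredient is established, so the proposal is a correct road map but not yet a proof; for the purposes of this paper the honest short justification is simply the citation to \cite[Corollary 5.2]{quantum}.
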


\section{F-polynomials in Quantum Cluster Algebras} \label{section:quantum-f-polys}
\noindent For this section, fix the following:
\begin{itemize}
  \item a vertex $t_{0} \in \mathbb{T}_{n}$;
  \item an $n \times n$ skew-symmetrizable integer matrix $B^{0} = (b^{0}_{ij})$;
  \item an $n \times n$ diagonal matrix $D$ with positive integer entries $d_{1}, \ldots, d_{n}$ on the diagonal satisfying the property that $DB^{0}$ is skew-symmetric.
\end{itemize}
In this section and the next, we will assume all quantum cluster algebras $\mathcal{A}$ under consideration have the following properties.  The extended exchange matrix at $t_{0}$, which will be denoted by $\tilde{B}^{0}$, has principal part $B^{0}$.   The $m \times m$ skew-symmetric integer matrix  which gives the quasi-commutation relations for the cluster variables and coefficients at $t_{0}$ will be denoted by $\Lambda_{0}$, and this matrix will be assumed to satisfy the following compatibility condition with $\tilde{B}^{0}$:
\begin{eqnarray} \label{compatibility}
(\tilde{B}^{0})^{T} \Lambda_{0} = ( D | 0 ).
\end{eqnarray}

For any such quantum cluster algebra $\mathcal{A}$, denote the ambient skew-field of the quantum cluster algebra $\mathcal{A}$ by $\mathcal{F}$.   Let $\tilde{B}^{t} = (b^{t}_{ij})$ denote the extended exchange matrix at the vertex $t$.  Write $\tilde{\bb}^{j; t} \in \mathbb{Z}^m$ for the $j$th column of $\tilde{B}^{t}$, and set $\tilde{\bb}^{j} = \tilde{\bb}^{j; t_{0}}$.  Let $M_{t}$ denote the toric frame at the vertex $t \in \mathbb{T}_{n}$, and put $M_{0} := M_{t_{0}}$.  Let $\Lambda_{t}$ denote the skew-symmetric bilinear form on $\mathbb{Z}^{m}$ corresponding to $M_{t}$.  (We will also use $\Lambda_{t}$ for the $m \times m$ skew-symmetric integer matrix which gives the quasi-commutation relations in the cluster at $t$).  Denote the cluster variables in the corresponding cluster by $X_{j; t} := M_{t}(e_{j})$ for $1 \leq j \leq n$, and write $X_{j} = X_{j; t_{0}}$ for the cluster variables in the initial cluster.   For $j \in [1, n]$, $t \in \mathbb{T}_{n}$, write
\begin{eqnarray} \label{def:y-hat}
\hat{Y}_{j; t}  =  M_{t}(\tilde{\bb}^{j; t}), \hspace{0.5cm} \hat{Y}_{j} =  \hat{Y}_{j; t_{0}}
\end{eqnarray}
From Remark 4.6 of \cite{quantum}, it is known that 

\begin{eqnarray} \label{yhat-comm2}
\Lambda_{t}(\tilde{\bb}^{i; t}, \tilde{\bb}^{j; t}) = d_{i}b^{t}_{ij}.
\end{eqnarray}
Equivalently, the elements $\hat{Y}_{j; t}$ satisfy the quasi-commutation relation
\begin{eqnarray} \label{yhat-comm1}
\hat{Y}_{i; t} \hat{Y}_{j; t} = q^{d_{i}b^{t}_{ij}} \hat{Y}_{j; t} \hat{Y}_{i; t}.
\end{eqnarray}
\noindent Note that these quasi-commutation relations are the same in every quantum cluster algebra under consideration since they only depend on the entries of the matrices $D$ and $B^{0}$.

Let $R = R_{B^{0}, D}$ be the $\mathbb{Z}[q^{\pm \frac{1}{2}}]$-algebra with generating set \{$Z_{j} : 1 \leq j \leq n \}$ such that the $Z_{i}$ satisfy the same quasi-commutation relations as the $\hat{Y}_{i}$:
\begin{eqnarray}
Z_{i}Z_{j} = q^{d_{i}b^{0}_{ij}}Z_{j}Z_{i}.
\end{eqnarray}

Then $R$ is a based quantum torus in the sense of Definition \ref{def:torus}.  Thus, we can consider the skew-field $\mathcal{F}(R)$ of right fractions of $R$:
\begin{eqnarray}
\mathcal{F}(R) = \{ FG^{-1} : F, G \in R \}.
\end{eqnarray}
For $F \in \mathcal{F}(R)$,  denote by $F(\hat{Y})$ the element of $\mathcal{F}$ obtained from $F$ by setting each $Z_{i}$ to $\hat{Y}_{i}$ for all $1 \leq i \leq n$.    In analogy to the notation at (\ref{def:monomial}), for $\textbf{c} = (c_{1}, \ldots, c_{n}) \in \ZZ^{n}$, define 
\begin{eqnarray} \label{eqn:Z-monomial}
Z^{\textbf{c}} = q^{\frac{1}{2} \sum_{1 \leq i < j \leq n} d_{j}b^{0}_{ji}c_{i}c_{j}} Z_{1}^{c_{1}} \cdots Z_{n}^{c_{n}}.
\end{eqnarray}
     
\begin{proposition} [{Example 0.5}, {\cite{oberwolfach}}] \label{prop:prinA-quantum} Suppose $\Lambda$ is an $n \times n$ skew-symmetric integer matrix.   The cluster algebra $\mathcal{A}_{\bullet} = \mathcal{A}_{\bullet}(B^{0}, t_{0})$ has a unique quantization such that the quasi-commutation relations of the cluster variables in the cluster at $t_{0}$ are given by $\Lambda$.  The unique $2n \times 2n$ matrix $\Lambda_{0}$ satisfying (\ref{compatibility}) is given by 
\begin{eqnarray}
\Lambda_{0} = \left(    \begin{array}{ccc} 
                       \Lambda & &  -\Lambda B^{0} - D \\
                       -(B^{0})^{T}\Lambda + D & & (B^{0})^{T}\Lambda B^{0} + (B^{0})^{T}D  \\   \end{array}  \right).
\end{eqnarray}  

\end{proposition}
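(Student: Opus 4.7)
The plan is to pin down $\Lambda_0$ directly by the compatibility equation $(\tilde B^0)^T \Lambda_0 = (D\mid 0)$, using the fact that for principal coefficients $\tilde B^0 = \left(\begin{smallmatrix} B^0 \\ I \end{smallmatrix}\right)$. This makes $(\tilde B^0)^T = ((B^0)^T \mid I)$, so multiplying it against a block decomposition of $\Lambda_0$ will immediately solve for all unknown blocks in terms of $\Lambda$, $B^0$, and $D$. Uniqueness will then be essentially automatic, and the content of the proposition will become the verification that the resulting matrix is actually skew-symmetric.

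First I would write the unknown as
\[
\Lambda_0 \;=\; \begin{pmatrix} \Lambda' & C \\ -C^T & E \end{pmatrix},
\]
with $\Lambda'$, $E$ skew-symmetric $n\times n$ blocks and $C$ arbitrary $n\times n$. The requirement that the quasi-commutation matrix of the cluster $(X_1,\dots,X_n)$ at $t_0$ equals $\Lambda$ forces $\Lambda' = \Lambda$. Then expanding
\[
\bigl((B^0)^T\mid I\bigr)\begin{pmatrix}\Lambda & C\\ -C^T & E\end{pmatrix} \;=\; \bigl((B^0)^T\Lambda - C^T \;\big|\; (B^0)^T C + E\bigr) \;=\; (D\mid 0)
\]
and reading off the two $n\times n$ blocks yields $C^T = (B^0)^T\Lambda - D$ and $E = -(B^0)^T C$. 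Using $D^T = D$ and $\Lambda^T = -\Lambda$ these simplify to $C = -\Lambda B^0 - D$ and $E = (B^0)^T\Lambda B^0 + (B^0)^T D$, matching the stated formula and establishing uniqueness.

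The main obstacle, and the only step that uses a hypothesis beyond pure linear algebra, is checking that this candidate $\Lambda_0$ is skew-symmetric. Skew-symmetry of the off-diagonal blocks is built in by construction, and $\Lambda^T = -\Lambda$ handles the top-left block. For the bottom-right block I would compute
\[
E^T \;=\; (B^0)^T \Lambda^T B^0 + D^T B^0 \;=\; -(B^0)^T\Lambda B^0 + D B^0,
\]
so $E^T = -E$ reduces to $DB^0 + (B^0)^T D = 0$, i.e.\ to the skew-symmetry of $DB^0$. This is precisely the defining property of the symmetrizer $D$ for $B^0$, so the verification goes through.

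Once $\Lambda_0$ is shown to be skew-symmetric and compatible with $\tilde B^0$, the pair $(\Lambda_0,\tilde B^0)$ defines a quantum seed, and mutating produces a quantum cluster pattern whose classical limit (setting $q=1$) recovers the original principal-coefficient cluster pattern for $\mathcal{A}_\bullet(B^0,t_0)$. This will finish both the existence of the quantization and its uniqueness subject to the prescribed quasi-commutations $\Lambda$ on the initial cluster.
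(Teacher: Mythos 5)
Your argument is correct. The paper itself gives no proof of this proposition -- it is quoted from \cite{oberwolfach} -- so there is nothing to compare against, but your block-matrix computation is the standard verification: solving $((B^0)^T\mid I)\Lambda_0=(D\mid 0)$ block by block forces exactly the stated formula, and the one genuinely non-automatic point, skew-symmetry of the lower-right block $(B^0)^T\Lambda B^0+(B^0)^TD$, correctly reduces to $DB^0+(B^0)^TD=0$, which is precisely the skew-symmetry of $DB^0$ assumed of the symmetrizer $D$. The only thing worth stating a little more explicitly is that uniqueness of $\Lambda_0$ holds only once the top-left block is pinned to $\Lambda$ by the prescribed quasi-commutation relations, which you do note.
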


\begin{definition} We say that quantum cluster algebra defined in Proposition \ref{prop:prinA-quantum} has \emph{principal coefficients}, and denote it by $\prinA = \prinA(B^{0}, D, \Lambda, t_{0})$.
\end{definition}

\begin{theorem} \label{thm:quantum-fpoly}  Let $j \in [1, n]$, $t \in \mathbb{T}_{n}$, $\Lambda$ an $n \times n$ skew-symmetric integer matrix, and put  $\prinA = \prinA(B^{0}, D, \Lambda, t_{0})$.
Let $\gg_{j; t} = \gg_{j; t}^{B^{0}; t_{0}}$ be the $\gg$-vector (as in Proposition \ref{def:g-vecs}).
\begin{enumerate}
\item[(1)] There exists a unique polynomial $F_{j; t} = F_{j; t}^{B^{0}; D; t_{0}}$ in $Z_{1}, \ldots, Z_{n}$ with coefficients in $\mathbb{Z}[q^{\pm \frac{1}{2}}]$ such that the cluster variable $X_{j; t} \in \prinA$ is given by 
\begin{eqnarray} \label{fpoly-eqn-g}
X_{j; t} = F_{j; t}(\hat{Y})M_{0}(\gg_{j; t}),
\end{eqnarray}
where $M_{0}$ is the toric frame at $t_{0}$ for $\prinA$, and $\hat{Y}_{1}, \ldots, \hat{Y}_{n}$ are defined as at (\ref{def:y-hat}) using the columns of the principal matrix with respect to $B^{0}$.
(Here, we consider $\gg_{j; t}$ as an element in $\ZZ^{2n}$ by appending $n$ 0's to the end of the vector.)  The polynomial $F_{j; t}$ does not depend on the choice of the matrix $\Lambda$.  We call this polynomial a \emph{(left) quantum $F$-polynomial}.
\item[(2)] Let $\tilde{B}^{0}$ be an $m \times n$ integer matrix with principal part $B^{0}$.  Then there exists $\lambda_{j; t}^{\tilde{B}^{0}; t_{0}} = \lambda_{j; t} \in \frac{1}{2}\mathbb{Z}$ such that in any quantum cluster algebra $\mathcal{A}$ whose initial exchange matrix is $\tilde{B}^{0}$, we have that the cluster variable $X_{j; t} \in \mathcal{A}$ is given by 
\begin{eqnarray} \label{fpoly-eqn-h}
X_{j; t} = q^{\lambda_{j; t}}F_{j; t}(\hat{Y})M_{0}(\tilde{\gg}_{j; t}^{\tilde{B}^{0}; t_{0}}),
\end{eqnarray}
where $M_{0}$ is the toric frame at $t_{0}$ for $\mathcal{A}$, and $\hat{Y}_{1}, \ldots, \hat{Y}_{n}$ are defined using the columns of $\tilde{B}^{0}$.
(Note that $\lambda_{j; t} = \lambda_{j; t}^{\tilde{B}^{0}; t_{0}}$ depends on $\tilde{B}^{0}$ but not on  $\mathcal{A}$ or the choice of initial quasi-commutation relations.)
\item[(3)] Setting $q = 1$ and $Z_{i} = u_{i}$ ($i \in [1, n]$) in $F_{j; t}$ yields the (nonquantum) $F$-polynomial $F_{j; t}^{B^{0}; t_{0}}$. 
\end{enumerate}
\end{theorem}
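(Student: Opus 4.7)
The plan is to prove (1), (2), and (3) simultaneously by induction on the distance $d(t_0,t)$ in $\mathbb{T}_n$. In the base case $t = t_0$ we have $X_{j;t_0} = M_0(\ee_j)$ and $\gg_{j;t_0} = \tilde\gg_{j;t_0} = \ee_j$, so $F_{j;t_0} = 1$ and $\lambda_{j;t_0} = 0$ verify everything. In the inductive step, for an edge $t \overunder{k}{} t'$ with $d(t_0,t') = d(t_0,t)+1$, the case $j \neq k$ transfers trivially from $t$ to $t'$, so the entire content reduces to $j = k$.

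For part (1), working in $\prinA(B^0,D,\Lambda,t_0)$, I would apply the quantum exchange relation (Proposition~\ref{prop:quantum-exchrel}) to write
\begin{equation*}
X_{k;t'} = M_t\Bigl(-\ee_k + \sum_i [b^t_{ik}]_+\, \ee_i\Bigr) + M_t\Bigl(-\ee_k + \sum_i [-b^t_{ik}]_+\, \ee_i\Bigr),
\end{equation*}
expand each $M_t(\cdot)$ via \eqref{def:monomial} as a $q$-monomial in the $X_{i;t}$'s, and substitute the inductive formula $X_{i;t} = F_{i;t}(\hat Y)M_0(\gg_{i;t})$ for $i \in [1,n]$ (and $X_{i;t} = M_0(\ee_i)$ for $i \in [n+1,2n]$). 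The commutation relation $\hat Y_j M_0(\cc) = q^{\Lambda_0(\tilde\bb^j,\cc)}M_0(\cc)\hat Y_j$, whose exponent is forced by $(\tilde B^0)^T\Lambda_0 = (D\,|\,0)$ to depend only on $B^0$ and $D$, lets me pull all $\hat Y$'s to the left and all $M_0(\cdot)$'s to the right within each summand. The $\epsilon$-independent $\gg$-vector recurrence of Proposition~\ref{g-vec-rec-b} then guarantees that both summands factor as $G_\pm(\hat Y)\cdot M_0(\gg_{k;t'})$ with the \emph{same} $M_0$-part, so setting $F_{k;t'} := G_+ + G_-$ gives $X_{k;t'} = F_{k;t'}(\hat Y)M_0(\gg_{k;t'})$. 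The resulting recurrence for $F_{k;t'}$ in terms of the $F_{i;t}$'s uses only $B^0$ and $D$, establishing $\Lambda$-independence; polynomiality with $\basering$-coefficients follows from the quantum Laurent phenomenon (Theorem~\ref{thm:quantum-laurent}) combined with a sharpened form analogous to \cite[Proposition~11.2]{ca2}, together with the fact that $Z_j \mapsto \hat Y_j$ identifies $R$ with a based quantum subtorus of $\Fcal$.

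For part (2), I would redo the same computation in a general quantum cluster algebra $\mathcal A$ with initial matrix $\tilde B^0$, replacing $\gg_{i;t}$ by $\tilde\gg_{i;t}$ throughout. Since the $\hat Y_j$'s obey the same quasi-commutation \eqref{yhat-comm1}, the polynomial in $\hat Y$'s produced by the computation is \emph{identical} to that of part (1). However, the extended $\gg$-vector recurrence (Proposition~\ref{h-vec-props}) is \emph{not} sign-independent: it differs from Proposition~\ref{g-vec-rec-b} by the correction $-\sum_j[-b^{\bullet,t}_{n+j,k}]_+\,\tilde\bb^j$. Consequently the two summands no longer yield the same $M_0$-part, and pulling out a common $M_0(\tilde\gg_{k;t'})$ leaves behind a scalar $q^{\lambda_{k;t'}}$ whose exponent is determined by the mutation combinatorics of $\tilde B^0$ alone. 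Part (3) follows immediately: setting $q = 1$ in the recurrence for $F_{k;t'}$ recovers the classical recurrence of Proposition~\ref{fpoly-rec}, and the initial conditions match, so the $q=1$ specialization equals $F^{B^0;t_0}_{k;t'}$.

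The main obstacle is the commutation-relation bookkeeping in the inductive step: one must verify in detail that the accumulated $q$-factors, when combined with the bilinear form $\Lambda_0$ and the $\gg$-vector recurrence, produce \emph{exactly} $M_0(\gg_{k;t'})$ in part (1) or $q^{\lambda_{k;t'}}M_0(\tilde\gg_{k;t'})$ in part (2), and that the remaining bracket is a genuine polynomial in the $Z_j$'s rather than merely a noncommutative Laurent expression. Confirming the polynomiality is likely to require isolating a quantum analogue of the sharpened Laurent phenomenon of \cite[Proposition~11.2]{ca2}, ensuring that $F_{k;t}$ divides the natural recurrence expression for $F_{k;t'}$ inside the noncommutative polynomial ring generated by the $Z_j$'s.
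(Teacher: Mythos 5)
Your overall skeleton (induction on the distance from $t_0$, the quantum exchange relation, pushing the $\hat Y$'s to the left past the $M_0(\cdot)$'s, and matching the result against the $\gg$-vector recurrences) coincides with the paper's. The one structural difference is that you process the two exchange monomials separately and invoke the sign-independence of Proposition \ref{g-vec-rec-b}, whereas the paper factors the exchange relation as $(q^{d_k/2}\hat Y_{k;t}+1)\,M_t(-\ee_k+\sum_i[-b^t_{ik}]_+\ee_i)$ and then proves a mutation rule for the elements $\hat Y_{j;t}$ (Lemma \ref{yhatprop}) expressing $\hat Y_{k;t}$ as a subtraction-free rational expression in $\hat Y_1,\dots,\hat Y_n$. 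Your variant is workable for part (1) (it is essentially how Theorem \ref{thm:quantum-fpoly-rec} is proved later), but in part (2) the ``$+$''-monomial produces $M_0(-\tilde\gg_{k;t}+\sum_i[b^t_{ik}]_+\tilde\gg_{i;t})$, and relating this to $M_0(\tilde\gg_{k;t'})$ up to a monomial in the $\hat Y_j$ requires knowing that $\sum_i b^t_{ik}\tilde\gg_{i;t}$ lies in the $\ZZ$-span of the columns $\tilde\bb^{1},\dots,\tilde\bb^{n}$; your claim that only a scalar $q^{\lambda_{k;t'}}$ is left over silently uses this and it is not addressed.

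The first genuine gap is polynomiality. You yourself flag that you would need ``a quantum analogue of the sharpened Laurent phenomenon of \cite[Proposition~11.2]{ca2}''; no such statement is available, and the paper does not prove one. Instead it argues in two steps: (i) the quantum Laurent phenomenon together with additivity of Newton polytopes applied to $C(\hat Y)=A(\hat Y)P(X_1,\dots,X_{2n})$ forces every exponent vector of $F_{j;t}(\hat Y)$ into the $\QQ$-span, hence (using that the bottom halves of $\tilde\bb^{1},\dots,\tilde\bb^{n}$ are $\ee_1,\dots,\ee_n$) the $\ZZ$-span, of these columns, so $F_{j;t}$ is at least a Laurent polynomial in the $Z_i$; and (ii) since $F_{j;t}$ is obtained from $Z_1,\dots,Z_n$ by subtraction-free operations, setting $q=1$ cannot shrink its Newton polytope, so $N(F_{j;t})=N(F^{B^0;t_0}_{j;t})$, and the classical $F$-polynomial having no negative exponents rules out negative exponents in $F_{j;t}$. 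Without an argument of this kind your $F_{j;t}$ remains only an element of the skew field of fractions of $R$, and the division by $F_{k;t}$ in the recurrence is not justified inside a polynomial ring.

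The second gap is the independence of the accumulated $q$-exponents. The power of $q$ collected when expanding $M_t(\cdot)$ into an ordered product involves $\Lambda_t$, while the power collected when reassembling the $M_0(\tilde\gg_{i;t})$'s involves $\Lambda_0$; a priori both depend on the chosen compatible form, i.e.\ on $\mathcal A$ (and, in part (1), on $\Lambda$). That the relevant combination depends only on $\tilde B^0$ is exactly the content of the paper's Lemma \ref{lemma:rho-props}, which shows that $\rho^t_{ij}=\Lambda_t(\ee_i,\ee_j)-\Lambda_0(\tilde\gg_{i;t},\tilde\gg_{j;t})$ satisfies a recurrence driven solely by the mutated matrices. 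You assert this independence (``determined by the mutation combinatorics of $\tilde B^0$ alone'') without proof; it is not automatic and must be established for both the $\Lambda$-independence in (1) and the $\mathcal A$-independence of $\lambda_{j;t}$ in (2).
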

\begin{proof} We start by proving the existence of $F_{j; t} \in \mathcal{F}(R)$ which satisfy (\ref{fpoly-eqn-h}) so that $\lambda_{j; t} = 0$ if $\tilde{B}^{0}$ is principal.  Proceed by induction on the distance of $t$ from the initial vertex $t_{0}$ in the tree $\mathbb{T}_{n}$.  For $1 \leq j \leq n$, $X_{j; t_{0}} = M_{0}(\textbf{e}_{j})$ and $\tilde{\gg}_{j; t}^{\tilde{B}^{0}; t_{0}} = \ee_{j}$, so take $F_{j; t_{0}} = 1$ and $\lambda_{j; t_{0}} = 0$.  Now suppose that for some vertex $t \in \mathbb{T}_{n}$, each $F_{j; t}$ has been defined satisfying (\ref{fpoly-eqn-g}) and (\ref{fpoly-eqn-h}). Let $t' \in \mathbb{T}_{n}$ such that $t \frac{k}{\hspace{1cm}} t'$.  If $j \neq k$, then $X_{j; t'} = X_{j; t}$ and $\tilde{\gg}_{j; t'}^{\tilde{B}^{0}; t_{0}} = \tilde{\gg}_{j; t}^{\tilde{B}^{0}; t_{0}}$, so put $F_{j; t'} = F_{j; t}$. Now consider the cluster variable $X_{k; t'}$. Using Proposition \ref{prop:quantum-exchrel} and the fact that $[b]_{+} = b + [-b]_{+}$ for any $b \in \ZZ$, 
\begin{eqnarray}
X_{k; t'} = M_{t}(-\textbf{e}_{k} + \sum_{i = 1} ^{m} [-b_{ik} ^{t}]_{+} \textbf{e}_{i} + \tilde{\bb}^{k; t}) + M_{t}(-\textbf{e}_{k} + \sum_{i = 1} ^{m} [-b_{ik} ^{t}]_{+} \textbf{e}_{i})   \\
= q^{\frac{1}{2}d_{k}}M_{t}(\tilde{\bb}^{k; t})M_{t}(-\textbf{e}_{k} + \sum_{i = 1} ^{m} [-b_{ik} ^{t}]_{+} \textbf{e}_{i}) + M_{t}(-\textbf{e}_{k} + \sum_{i = 1} ^{m} [-b_{ik} ^{t}]_{+} \textbf{e}_{i})   \\
= (q^{\frac{1}{2}d_{k}} \hat{Y}_{k; t} + 1)M_{t}(-\textbf{e}_{k} + \sum_{i = 1} ^{m} [-b_{ik} ^{t}]_{+} \textbf{e}_{i}).   \hspace{4.5cm} \label{clustervareqn}
\end{eqnarray}
The second equality follows from the fact that for $j \in [1, n]$, $i \in [1, m]$
\begin{eqnarray} \label{quasicomm1}
\Lambda_{t}(\tilde{\bb}^{j; t}, \textbf{e}_{i}) = \delta_{ij}d_{j},
\end{eqnarray}
which is  a restatement of the compatibility condition between $\tilde{B}^{t}$ and $\Lambda_{t}$.  

Some notation that will be used throughout this section and the next: let $G_{1}, \ldots, G_{p}$ be elements of a skew-field.  Then define 
\begin{eqnarray}
\prod^{\rightarrow}_{i \in [1, p]} G_{i} = G_{1} \ldots G_{p}.
\end{eqnarray}
The elements $G_{1}, \ldots, G_{p}$ are not necessarily commutative, so this product notation establishes a fixed order in which the elements are to be multiplied.

Let $\rho_{1} \in \frac{1}{2} \mathbb{Z}$ be such that  $M_{t}(-\textbf{e}_{k} + \sum_{i = 1} ^{m} [-b_{ik} ^{t}]_{+} \textbf{e}_{i})$ is equal to 
\begin{eqnarray} \label{rho1}
 q^{\rho_{1}} M_{t}(-\textbf{e}_{k}) \left( \prod^{\rightarrow}_{i \in [1, n]} M_{t}([-b^{t}_{ik}]_{+} \textbf{e}_{i}) \right) M_{0}(\sum_{i = n + 1} ^{m} [-b_{ik} ^{t}]_{+} \textbf{e}_{i})
\end{eqnarray}
Thus, 
\begin{eqnarray}
X_{k; t'}  =  q^{\rho_{1}}(q^{\frac{1}{2}d_{k}}\hat{Y}_{k; t} + 1) X_{k; t}^{-1} \left(\prod^{\rightarrow}_{i \in [1, n]} X_{i; t}^{[-b^{t}_{ik}]_{+}}\right) M_{0}(\sum_{i = n + 1} ^{m} [-b_{ik} ^{t}]_{+} \textbf{e}_{i}).
\end{eqnarray}
Using the expressions given at (\ref{fpoly-eqn-h}) for cluster variables at the vertex $t$, 

\begin{eqnarray}  \label{Xlt1}
\noindent X_{k; t'} & = & q^{\rho_{1} + \lambda'}(q^{\frac{1}{2}d_{k}}\hat{Y}_{k; t} + 1) (F_{k; t}(\hat{Y}) M_{0}(\tilde{\gg}^{\tilde{B}^{0}; t_{0}}_{k; t}))^{-1}  \\
\nonumber & & \hspace{0.7cm} \times \left( \prod^{\rightarrow}_{i \in [1, n]} (F_{i; t}(\hat{Y}) M_{0}(\tilde{\gg}^{\tilde{B}^{0}; t_{0}}_{i; t}))^{[-b^{t}_{ik}]_{+}} \right) M_{0}(\sum_{i = n + 1} ^{m} [-b_{ik} ^{t}]_{+} \textbf{e}_{i}),
\end{eqnarray}
where $\lambda' \in \frac{1}{2}\mathbb{Z}$ is given by 
\begin{eqnarray} \label{lambda2}
\lambda' =  - \lambda_{k; t} + \sum_{i = 1} ^{n} [-b_{ik} ^{t}]_{+} \lambda_{i; t}.
\end{eqnarray}

Observe that $\lambda'$ does not depend on $\mathcal{A}$, only on the choice of $\tilde{B}^{0}$, and that $\lambda' = 0$ if $\tilde{B}^{0}$ is principal.

By (\ref{quasicomm1}), the elements $\hat{Y}_{j}$ and $X_{i}$ obey the following quasi-commutation relations for $j \in [1, n]$, $i \in [1, m]$:

\begin{eqnarray} \label{quasicomm2}
\hat{Y}_{j}X_{i} = q^{\delta_{ij}d_{j}}X_{i}\hat{Y}_{j}.
\end{eqnarray}
Observe that this quasi-commutation relation only depends on the entries of $D$.  Using (\ref{quasicomm2}), we can move all $M_{0}(\tilde{\gg}^{\tilde{B}^{0}; t_{0}}_{i; t})$ to the right in (\ref{Xlt1}), from which it follows that 

\begin{eqnarray}  \label{Xlt2}
X_{k; t'} & = & q^{\rho_{1} + \lambda'}(q^{\frac{1}{2}d_{k}}\hat{Y}_{k; t} + 1) P_{k; t'}(\hat{Y}) M_{0}(\tilde{\gg}^{\tilde{B}^{0}; t_{0}}_{k; t})^{-1} \\
\nonumber & & \hspace{1cm} \times  \left( \prod^{\rightarrow}_{i \in [1, n]} M_{0}(\tilde{\gg}^{\tilde{B}^{0}; t_{0}}_{i; t})^{[-b^{t}_{ik}]_{+}}  \right)  M_{0}(\sum_{i = n + 1} ^{m} [-b_{ik} ^{t}]_{+} \textbf{e}_{i}), 
\end{eqnarray}
where $P_{k; t'}$ is some element of $\mathcal{F}(R)$.  Observe that $P_{k; t'}$ does not depend on $\tilde{B}^{0}$ or the choice of quantum cluster algebra $\mathcal{A}$ for the following reasons: for each $j \in [1, n]$, $F_{j; t}$ does not depend on $\mathcal{A}$ or $\tilde{B}^{0}$, by the inductive hypothesis; the first $n$ coordinates of each $\tilde{\gg}^{\tilde{B}^{0}; t_{0}}_{j; t}$ are equal to $\gg_{j; t}$, which only depend on $B^{0}$ and $D$, and the remaining $m - n$ coordinates correspond to coefficient variables $X_{n + 1}, \ldots, X_{m}$, which commute with the $\hat{Y}_{j}$ elements; finally, the powers $[-b^{t}_{ik}]_{+}$ of the $M_{0}(\tilde{\gg}_{i; t}^{\tilde{B}^{0}; t_{0}})$ are in the principal part of $\tilde{B}^{t}$, which is the same for every quantum cluster algebra presently under consideration.

Write 

\begin{eqnarray}
\tilde{\gg}'_{k; t'} = -\tilde{\gg}^{\tilde{B}^{0}; t_{0}}_{k; t} +  \sum_{i =  1} ^{m} [-b_{ik} ^{t}]_{+} \tilde{\gg}^{\tilde{B}^{0}; t_{0}}_{i; t} 
\end{eqnarray}
\noindent and let $\rho_{2} \in \frac{1}{2} \mathbb{Z}$ such that   
\begin{eqnarray} \label{rho2}
M_{0}(\tilde{\gg}'_{k; t'}) =  q^{ \rho_{2}} M_{0}(-\tilde{\gg}_{k; t}) \left( \prod_{i \in [1, n]}^{\rightarrow} M_{0}(\tilde{\gg}^{\tilde{B}^{0}; t_{0}}_{i; t})^{[-b^{t}_{ik}]_{+}} \right) \\
\times M_{0}(\sum_{i = n + 1} ^{m} [-b_{ik} ^{t}]_{+} \textbf{e}_{i})
\end{eqnarray}
Put $\rho = \rho_{1} - \rho_{2}$.  Then 
\begin{eqnarray}  \label{Xlt3}
X_{k; t'} & = & q^{\rho + \lambda'}(q^{\frac{1}{2}d_{k}}\hat{Y}_{k; t} + 1) P_{k; t'}(\hat{Y}) M_{0}(\tilde{\gg}'_{k; t'}).
\end{eqnarray}
To bring this expression closer to (\ref{fpoly-eqn-h}), we must show that $\hat{Y}_{k; t}$ can be expressed as a subtraction-free rational expression in $\hat{Y}_{1}, \ldots, \hat{Y}_{n}$.   

\begin{lemma} \label{yhatprop} Let $t \frac{k}{\hspace{1cm}} t'$ be vertices of $\mathbb{T}_{n}$, let $j \in [1, n]$, and write $b = b^{t}_{kj} = -b^{t'}_{kj}$.  Then
\begin{eqnarray}  \label{yhatrecc}
\hat{Y}_{j; t'} = \left\{   \begin{array}{cl}
                            \hat{Y}_{j; t} \prod_{p = 0}^{|b| - 1} (1 + q^{(-d_{k}p - \frac{d_{k}}{2})}\hat{Y}_{k; t}) & \mbox{ if } b \leq 0, j \neq k  \\
                   \hat{Y}_{j; t}\hat{Y}_{k; t}^{b} \prod_{p = 0}^{b - 1} (\hat{Y}_{k; t} + q^{(-d_{k}p - \frac{d_{k}}{2})})^{-1} & \mbox{ if } b \geq 0, j \neq k \\
                        \hat{Y}_{k; t}^{-1} & \mbox{ if } j = k
                            \end{array}
                        \right.
\end{eqnarray}
\end{lemma} 
\begin{proof} 

\

\noindent \textit{Case 1: $b \leq 0$, $j \neq k$.}  

\noindent Using (\ref{eq:toric-frame-mutation}), 
\begin{eqnarray} 
\hat{Y}_{j; t'} = \sum_{p = 0}^{|b|} \left(\!\! \begin{array}{c} |b| \\ p  \end{array}  \!\!\right)_{q^{d_{k}/2}}   M_{t}(E_{-}\tilde{\bb}^{j; t'} - p\tilde{\bb}^{k; t}), 
\end{eqnarray}
where $E_{-} = (e_{il})$ is the $m \times m$ matrix whose entries are given at (\ref{eq:E-epsilon}).
For each $p = 1, \ldots, |b|$, 
\begin{eqnarray} \label{Eminus}
E_{-}\tilde{\bb}^{j; t'} & = & -b^{t'}_{kj}\textbf{e}_{k} + \sum_{i \in [1, m], i \neq k} (b^{t'}_{ij} + b^{t'}_{kj}[b^{t}_{ik}]_{+})\textbf{e}_{i}  \\
\nonumber & = &  b^{t}_{kj}\textbf{e}_{k} + \sum_{i \in [1, m], i \neq k} (b_{ij}^{t} + [bb^{t}_{ik}]_{+}sgn(b) - b[b^{t}_{ik}]_{+})\textbf{e}_{i}   
\end{eqnarray}
Since $b \leq 0$, one can check that 
\begin{eqnarray}
E_{-}\tilde{\bb}^{j; t'} = b^{t}_{kj}\textbf{e}_{k} + \sum_{i \in [1, m], i \neq k} (b_{ij}^{t} - bb^{t}_{ik})\textbf{e}_{i} = \tilde{\bb}^{j; t} - b\tilde{\bb}^{k; t}.
\end{eqnarray}
It follows that 
\begin{eqnarray}
\hat{Y}_{j; t'} & = & \sum_{p = 0}^{|b|} \left(\!\! \begin{array}{c} |b| \\ p  \end{array}  \!\!\right)_{q^{d_{k}/2}}   M_{t}(\tilde{\bb}^{j; t} - b\tilde{\bb}^{k; t} - p\tilde{\bb}^{k; t}) \\
\nonumber & = &  \sum_{p = 0}^{|b|} \left(\!\! \begin{array}{c} |b| \\ p  \end{array}  \!\!\right)_{q^{d_{k}/2}}   M_{t}(\tilde{\bb}^{j; t} + p\tilde{\bb}^{k; t})  \\
\nonumber & = & \sum_{p = 0}^{|b|} \left(\!\! \begin{array}{c} |b| \\ p  \end{array}  \!\!\right)_{q^{d_{k}/2}}   q^{-d_{k}|b|p/2}  M_{t}(\tilde{\bb}^{j; t})M_{t}(\tilde{\bb}^{k; t})^{p}  \\
\nonumber & = &  M_{t}(\tilde{\bb}^{j; t}) \prod_{p = 0}^{|b| - 1} [1 + (q^{d_{k}/2})^{(|b| - 1 - 2p)}(q^{-d_{k}|b|/2}M_{t}(\tilde{\bb}^{k; t}))]
\end{eqnarray}
The last equality follows from the $t$-binomial formula.  The proposition in this case follows after simplifying this last expression.

\

\noindent \textit{Case 2: $b \geq 0$, $j \neq k$.}

\noindent In this case, $\hat{Y}_{j; t'} = M_{t'}(-\tilde{\bb}^{j; t'})^{-1}$, and 

\begin{eqnarray} 
M_{t'}(-\tilde{\bb}^{j; t'}) = \sum_{p = 0}^{|b|} \left(\!\! \begin{array}{c} |b| \\ p  \end{array}  \!\!\right)_{q^{d_{k}/2}}   M_{t}(-E_{-}\tilde{\bb}^{j; t'} - p\tilde{\bb}^{k; t})
\end{eqnarray}
Using the expression at (\ref{Eminus}), 
\begin{eqnarray}
E_{-}\tilde{\bb}^{j; t'} = b^{t}_{kj}\textbf{e}_{k} + \sum_{i \in [1, m], i \neq k} b_{ij}^{t}\textbf{e}_{i} = \tilde{\bb}^{j; t}.
\end{eqnarray}
Thus, 
\begin{eqnarray}
\hat{Y}_{j; t'} & = & \left[\sum_{p = 0}^{|b|} \left(\!\! \begin{array}{c} |b| \\ p  \end{array}  \!\!\right)_{q^{d_{k}/2}} M_{t}(-\tilde{\bb}^{j; t} - p\tilde{\bb}^{k; t})\right]^{-1} \\
\nonumber & = & \left[\sum_{p = 0}^{|b|} \left(\!\! \begin{array}{c} |b| \\ p  \end{array}  \!\!\right)_{q^{d_{k}/2}} q^{-d_{k}|b|p/2}M_{t}(\tilde{\bb}^{k; t})^{-p}M_{t}(\tilde{\bb}^{j; t})^{-1}\right]^{-1} \\
\nonumber & = & M_{t}(\tilde{\bb}^{j; t}) \prod_{p = 0}^{|b| - 1} \left[(1 + (q^{d_{k}/2})^{|b| - 1 - 2p})(q^{-d_{k}|b|/2}M_{t}(\tilde{\bb}^{k; t})^{-1} )\right]^{-1}
\end{eqnarray}
where the last expression follows from the $t$-binomial formula (\ref{eqn:t-binomial-formula}). Additional simplification yields the proposition in this case.

\

\noindent \textit{Case 3: $j = k$} 

\noindent In this case, $b = 0$.  Using (\ref{eq:toric-frame-mutation}) again, 
\begin{eqnarray}
\hat{Y}_{k; t'} = M_{t}(E_{-}\tilde{\bb}^{k; t'}) = M_{t}(\tilde{\bb}^{k; t'}) = M_{t}(-\tilde{\bb}^{k; t}) = M_{t}(\tilde{\bb}^{k; t})^{-1}.
\end{eqnarray}
\end{proof}
  
 \begin{remark} The recurrence relations for the $\hat{Y}_{i; t}$ which appear in Lemma \ref{yhatprop} are essentially the same as certain quantum mutation maps which occur in quantum spaces $\mathcal{X}_{q}$ as defined in \cite{fg} (see Lemma 3.2 in \emph{loc. cit.}).
  \end{remark} 
  
By Lemma \ref{yhatprop}, one may show that there exists a unique $\mathcal{Y}_{k; t} \in \mathcal{F}(R)$ such that $\mathcal{Y}_{k; t}(\hat{Y}) = \hat{Y}_{k; t}$; furthermore, $\mathcal{Y}_{k; t}$ can be expressed as a subtraction-free rational expression in $Z_{1}, \ldots, Z_{n}$ depending only on $B^{0}$ and $D$, and not on $\tilde{B}^{0}$ or $\mathcal{A}$.    Set
\begin{eqnarray}
G_{k; t} = q^{\frac{1}{2}d_{k}}\mathcal{Y}_{k; t} + 1 \in \mathcal{F}(R).
\end{eqnarray}
By (\ref{hvec-rec2}),
\begin{eqnarray}
\tilde{\gg}_{k; t'}^{\tilde{B}^{0}; t_{0}} = \tilde{\gg}_{k; t}' - \sum_{i = 1} ^{n} [-b^{\bullet, t}_{ik}]_{+}\tilde{\bb}^{i}.
\end{eqnarray}

Let $\lambda''$ be an element of $\frac{1}{2}\mathbb{Z}$ satisfying 

\begin{eqnarray}
M_{0}(\tilde{\gg}_{k; t'}) & = & q^{\lambda''} M_{0}(-[-b^{\bullet, t}_{nk}]_{+}\tilde{\bb}^{n; t_{0}}) \ldots M_{0}(-[-b^{\bullet, t}_{1k}]_{+} \tilde{\bb}^{1; t_{0}}) M_{0}(\tilde{\gg}'_{k; t'})\\
\nonumber & = & q^{\lambda''}  \hat{Y}_{n}^{-[-b^{\bullet, t}_{nk}]_{+}} \ldots \hat{Y}_{1} ^{-[-b^{\bullet, t}_{1k}]_{+}}M_{0}(\tilde{\gg}'_{k; t'})
\end{eqnarray}
Using (\ref{yhat-comm2}) and Proposition \ref{h-vec-props}, $\lambda''$ can be written explicitly as 

\begin{eqnarray}
\frac{1}{2} \Lambda_{0}(\sum_{i = 1} ^{n} [-b^{\bullet, t}_{ik}]_{+}\tilde{\bb}^{i}, \tilde{\gg}'_{k; t'}) + \frac{1}{2} \sum_{1 \leq i < j \leq n} \Lambda_{0}([-b^{\bullet, t}_{ik}]_{+}\tilde{\bb}^{i}, [-b^{\bullet, t}_{jk}]_{+}\tilde{\bb}^{j})  \\
\nonumber = \frac{1}{2} \Lambda_{0}(\sum_{i = 1} ^{n} [-b^{\bullet, t}_{ik}]_{+}\tilde{\bb}^{i}, \gg_{k; t'}) + \frac{1}{2} \sum_{1 \leq i < j \leq n} [-b^{\bullet, t}_{ik}]_{+}[-b^{\bullet, t}_{jk}]_{+}d_{i}b_{ij}  \label{lambda3}
 \end{eqnarray}
Thus, $\lambda''$ will only depend on $B^{0}$, $D$, not on $\tilde{B}^{0}$ or on $\mathcal{A}$.   Let $\rho_{\bullet}$ be the value of $\rho$ as obtained above when $\tilde{B}^{0}$ is the principal matrix corresponding to $B^{0}$.  Let   

\begin{eqnarray}  \label{fpolydef0}
F_{k; t'} = q^{\rho_{\bullet} - \lambda''} G_{k; t}P_{k; t'}Z_{1}^{[-b^{\bullet, t}_{1k}]_{+}} \ldots Z_{n}^{[-b^{\bullet, t}_{nk}]_{+}},
\end{eqnarray}
\begin{eqnarray} \label{eq:lambda-rec}
\lambda_{k; t'} = \rho - \rho_{\bullet} + \lambda'.
\end{eqnarray}

By (\ref{Xlt3}),  

\begin{eqnarray}
X_{k; t'} & = & q^{\rho + \lambda'}(q^{\frac{1}{2}d_{k}}\hat{Y}_{k; t} + 1) P_{k; t'}(\hat{Y}) M_{0}(\tilde{\gg}'_{k; t'}) \\
& = & q^{\rho - \rho_{\bullet} + \lambda' + \lambda''} F_{k; t'}(\hat{Y}) \hat{Y}_{n}^{-[-b^{\bullet, t}_{nk}]_{+}} \ldots \hat{Y}_{1} ^{-[-b^{\bullet, t}_{1k}]_{+}}M_{0}(\tilde{\gg}'_{k; t'}) \\
& = & q^{\rho - \rho_{\bullet} + \lambda'} F_{k; t'}(\hat{Y}) M_{0}(\tilde{\gg}_{k; t'})    \label{fpolydef0b}
\end{eqnarray}

The proof of the existence of $F_{j; t} \in \mathcal{F}(R)$ satisfying (\ref{fpoly-eqn-g}) and (\ref{fpoly-eqn-h}) now follows by induction.

Note that $\lambda_{k; t'} = 0$ if $\tilde{B}^{0}$ is the principal matrix corresponding to $B^{0}$.   To check that $F_{k; t'}$ is independent of the choice of $\tilde{B}^{0}$ or $\mathcal{A}$, and that $\lambda_{k; t'}$ depends on $\tilde{B}^{0}$ but not $\mathcal{A}$, we must prove that $\rho$ depends on $\tilde{B}^{0}$, but not on the choice of quantum cluster algebra $\mathcal{A}$.  For $t \in \mathbb{T}_{n}$, $i, j \in [1, m]$, define
\begin{eqnarray}
\rho^{t}_{ij}(\tilde{B}^{0}) = \Lambda_{t}(e_{i}, e_{j}) - \Lambda_{0}(\tilde{\gg}_{i; t}^{\tilde{B}^{0}; t_{0}}, \tilde{\gg}_{j; t}^{\tilde{B}^{0}; t_{0}}).
\end{eqnarray}
With this notation and the definitions of $\rho_{1}, \rho_{2}$ given at (\ref{rho1}),  (\ref{rho2}),  it follows that 
\begin{eqnarray}
\rho & = & \rho_{1} - \rho_{2} \\
 \nonumber    & = & -\frac{1}{2} \sum_{i = 1}^{m} [-b^{t}_{ik}]_{+} \rho^{t}_{ik}(\tilde{B}^{0}) 
           + \frac{1}{2} \sum [-b^{t}_{ik}]_{+} [-b^{t}_{jk}]_{+} \rho_{ij}^{t}(\tilde{B}^{0}),
\end{eqnarray}
where the last summation ranges over $j \in [1, n], i \in [1, m]$ such that $j < i$.  Thus, the assertion about $\rho$ follows from the next lemma. 

\begin{lemma} \label{lemma:rho-props} Let $i, j \in [1, m]$, and let $\rho^{t}_{ij} = \rho^{t}_{ij}(\tilde{B}^{0})$. 
The following properties hold:
\begin{eqnarray} 
\label{rho-init}  \rho^{t_{0}}_{ij} & = & 0 \\
\label{rho-prop1} \rho^{t}_{ij} & = & -\rho^{t}_{ji} \\
\label{rho-prop2} \rho^{t}_{ii} & = & 0 \\
\label{rho-prop3} \rho^{t}_{ij} & = & 0 \hspace{0.5cm} \mbox{ if } i \in [n + 1, m] \mbox{ or } j \in [n + 1, m].
\end{eqnarray}
If $t \frac{k}{\hspace{1cm}} t'$ in $\mathbb{T}_{n}$, then 
\begin{eqnarray}
\label{rho-rec1} \rho^{t'}_{ij} & = & \rho^{t}_{ij} \hspace{0.5cm} \mbox{ if } i, j \neq k \\
\label{rho-rec2} \rho^{t'}_{ik} & = & -\rho^{t}_{ik} + \sum_{\ell = 1}^{m} [-b_{ik}^{t}]_{+} \rho^{t}_{i, \ell} - \sum_{\ell = 1}^{n} [-b^{\bullet, t}_{\ell + n, k}]_{+} g_{\ell} d_{\ell} \hspace{0.5cm} \mbox{ if } i \neq k, i \in [1, n] 
\end{eqnarray}
where $\gg_{i; t}^{B^{0}; t_{0}} = (g_{1}, \ldots, g_{n})$.

Consequently, $\rho^{t}_{ij}(\tilde{B}^{0})$ depends only on $\tilde{B}^{0}$, not on the choice of quantum cluster algebra $\mathcal{A}$.
\end{lemma}
\begin{proof} Equations (\ref{rho-init}), (\ref{rho-rec1}) follow from (\ref{hvec-init}), (\ref{hvec-rec1}), respectively, while (\ref{rho-prop1}), (\ref{rho-prop2}) follow from the fact that $\Lambda_{0}$, $\Lambda_{t}$ are skew-symmetric.  Also, (\ref{rho-prop3}) is clear when $i, j \in [n + 1, m]$.  Let $i \in [1, m]$, $i \neq k$.  From (\ref{eq:Lambda-mutation}),
\begin{eqnarray}
\Lambda_{t'}(e_{i}, e_{k})  =  -\Lambda_{t}(e_{i}, e_{k}) + \sum_{\ell = 1}^{m} [-b^{t}_{\ell k}]_{+} \Lambda_{t}(e_{i}, e_{\ell}) 
\end{eqnarray}
Using (\ref{hvec-rec2}), $\Lambda_{0}(\tilde{\gg}_{i; t'}^{\tilde{B}^{0}; t_{0}}, \tilde{\gg}_{k; t'}^{\tilde{B}^{0}; t_{0}})$ equals
\begin{eqnarray}
& \displaystyle \Lambda_{0}(\tilde{\gg}_{i; t}^{\tilde{B}^{0}; t_{0}}, -\tilde{\gg}_{k; t}^{\tilde{B}^{0}; t_{0}} + \sum_{\ell = 1}^{m} [-b^{t}_{\ell k}]_{+} \tilde{\gg}_{\ell; t}^{\tilde{B}^{0}; t_{0}} - \sum_{\ell = 1}^{n} [-b^{\bullet, t}_{\ell + n, k}]_{+} \tilde{b}^{\ell}) \\
\nonumber & \displaystyle \hspace{3cm} = -\Lambda_{0}(\tilde{\gg}_{i; t}^{\tilde{B}^{0}; t_{0}}, \tilde{\gg}_{k; t'}^{\tilde{B}^{0}; t_{0}}) + \sum_{\ell = 1}^{m} [-b^{t}_{\ell k}]_{+} \Lambda_{0}(\tilde{\gg}_{i; t}^{\tilde{B}^{0}; t_{0}}, \tilde{\gg}_{\ell; t}^{\tilde{B}^{0}; t_{0}}) \\
\nonumber & \displaystyle \hspace{4cm} - \sum_{\ell = 1}^{n} [-b^{\bullet, t}_{\ell + n, k}]_{+} \Lambda_{0}.(\tilde{\gg}_{i; t}^{\tilde{B}^{0}; t_{0}}, \tilde{b}^{\ell})
\end{eqnarray}
First, consider the case where $i \in [1, n]$. Then 
\begin{eqnarray}
\rho^{t'}_{ik} = -\rho^{t}_{ik} + \displaystyle \sum_{\ell = 1}^{m} [-b^{t}_{\ell k}]_{+}\rho^{t}_{i\ell} + \sum_{\ell = 1}^{n} [-b^{\bullet, t}_{\ell + n, k}]_{+} \Lambda_{0}(\tilde{\gg}_{i; t}^{\tilde{B}^{0}; t_{0}}, \tilde{b}^{\ell}).
\end{eqnarray}
By Proposition \ref{h-vec-props}, for $i \in [1, n]$, the first $n$ coordinates of $\tilde{\gg}_{i; t}^{\tilde{B}^{0}; t_{0}}$ are given by $\gg_{i; t}^{B^{0}; t_{0}}$.  By (\ref{quasicomm1}), 
\begin{eqnarray}
\sum_{\ell = 1}^{n} [-b^{\bullet, t}_{\ell + n, k}]_{+} \Lambda_{0}(\tilde{\gg}_{i; t}^{\tilde{B}^{0}; t_{0}}, \tilde{b}^{\ell}) = -\sum_{\ell = 1}^{n} [-b^{\bullet, t}_{\ell + n, k}]_{+} g_{\ell} d_{\ell}.
\end{eqnarray}
This proves (\ref{rho-rec2}).

Now suppose that $i \in [n + 1, m]$.  Then 
\begin{eqnarray} \label{rho-rec3}
\rho^{t'}_{ik} = -\rho^{t}_{ik} + \displaystyle \sum_{\ell = 1}^{m} [-b^{t}_{\ell k}]_{+}\rho^{t}_{i\ell}.
\end{eqnarray}
To prove that $\rho^{t}_{ij} = 0$ when $i$ or $j$ is in $[n + 1, m]$, use induction on the distance of $t$ from $t_{0}$, with (\ref{rho-init}) as the base of the induction, and (\ref{rho-prop1}), (\ref{rho-prop2}), (\ref{rho-prop3}) (known when both $i, j \in [n + 1, m]$), (\ref{rho-rec1}), and (\ref{rho-rec3}) as recurrence relations.
\end{proof}

Setting $q = 1$ in (\ref{fpoly-eqn-g}) yields
\begin{eqnarray}
x_{j; t} = F_{j; t}|_{q = 1}(\hat{y}_{1}, \ldots, \hat{y}_{n})x_{1}^{g_{1}}\ldots x_{n}^{g_{n}} \in \prinA(B^{0}, t_{0}),
\end{eqnarray}
where $\gg_{j; t} = (g_{1}, \ldots, g_{n})$.  Comparing this (\ref{eq:cluster-var-fpoly}), it follows that 
\begin{eqnarray}
F_{j; t}|_{q = 1}(\hat{y}_{1}, \ldots, \hat{y}_{n}) = F_{j; t}^{B^{0}; t_{0}}(\hat{y}_{1}, \ldots, \hat{y_{n}}).  
\end{eqnarray}
Since $\hat{y}_{1}, \ldots, \hat{y}_{n}$ are algebraically independent, part (3) of Theorem \ref{thm:quantum-fpoly}  follows.  

For the uniqueness of $F_{j; t}$, observe that if another $F'_{j; t} \in R$ satisfies equation (\ref{fpoly-eqn-g}), then $F_{j; t}(\hat{Y}) = F'_{j; t}(\hat{Y})$.  Since $\hat{Y}_{1}, \ldots, \hat{Y}_{n}$ are algebraically independent, it follows that $F_{j; t} = F'_{j; t}$.

We conclude the proof of part (1) of Theorem \ref{thm:quantum-fpoly} by showing that $F_{j; t}$ is a polynomial in $Z_{1}, \ldots, Z_{n}$ with coefficients in $\mathbb{Z}[q^{\pm \frac{1}{2}}]$.  The quantum Laurent phenomenon (Theorem \ref{thm:quantum-laurent}) implies that each cluster variable $X_{j; t}$ in $\mathcal{A}_{\bullet}(B^{0}, D, \Lambda, t_{0})$ can be expressed as a Laurent polynomial in $X_{1}, \ldots, X_{2n}$ with coefficients in $\basering$, which implies the same is true for $F_{j; t}(\hat{Y})$.  Let $F_{j; t}(\hat{Y}) = P(X_{1}, \ldots, X_{2n})$ for some Laurent polynomial $P$ with coefficients in $\basering$, and write $F_{j; t} = A^{-1}C$ for some elements $A, C \in R$.  Then 
\begin{eqnarray} \label{cap-eqn}
C(\hat{Y}) & = & A(\hat{Y})P(X_{1}, \ldots, X_{2n}). 
\end{eqnarray}
If $F$ is a Laurent polynomial in $X_{1}, \ldots, X_{2n}$, then let the Newton polytope $\mbox{Newt}(F)$ of $F$ be the convex hull of the set 
\begin{eqnarray}
\{ (a_{1}, \ldots, a_{2n}) \in \ZZ^{2n} : X_{1}^{a_{1}} \ldots X_{2n}^{a_{2n}} \mbox{ has nonzero coefficient in } F \}.
\end{eqnarray}
Taking the Newton polytope of both sides of (\ref{cap-eqn}),
\begin{eqnarray}
\hspace{1cm} \mbox{Newt}(C(\hat{Y})) & = & \mbox{Newt}(A(\hat{Y})) + \mbox{Newt}(P(X_{1}, \ldots, X_{2n})) 
\end{eqnarray}
\noindent This implies that $\mbox{Newt}(P(X_{1}, \ldots, X_{2n}))$ is contained in the $\mathbb{Q}$-linear span of $\tilde{\bb}^{1}, \ldots, \tilde{\bb}^{n}$.  Thus, each exponent vector of $P(X_{1}, \ldots, X_{2n})$ can be expressed as a $\mathbb{Q}$-linear combination of $\tilde{\bb}^{1}, \ldots, \tilde{\bb}^{n}$.  However, the last $n$ coordinates of the vector $\tilde{\bb}^{i}$ is given by the vector $\ee_{i} \in \ZZ^{n}$, so each exponent vector must in fact be a $\mathbb{Z}$-linear combination of $\tilde{\bb}^{1}, \ldots, \tilde{\bb}^{n}$.  This proves $P(X_{1}, \ldots, X_{2n}) = F_{j; t}(\hat{Y})$ is a Laurent polynomial in $\Yhats$.  Since $\Yhats$ are algebraically independent, this implies $F_{j; t}$ is a Laurent polynomial in $Z_{1}, \ldots, Z_{n}$ with coefficients in $\ZZ[q^{\pm \frac{1}{2}}]$.

Now consider the Newton polytope $N(F_{j; t})$ of $F_{j; t}$ with respect to $Z_{1}, \ldots, Z_{n}$, i.e., $N(F_{j; t})$ is the convex hull in $\mathbb{R}^{n}$ of the set 
\begin{eqnarray}
\{ \aa \in \ZZ^{n} : Z^{\aa} \mbox{ has nonzero coefficient in } F_{j; t} \}.
\end{eqnarray}
Denote by $N(F)$ the Newton polytope of the polynomial $F(u_{1}, \ldots, u_{n})$ with respect to $u_{1}, \ldots, u_{n}$, which is defined in a analogous way $N(F_{j; t})$.   By following the proof of the existence of $F_{j; t}$, it is easy to see that $F_{j; t}$ is obtained from $Z_{1}, \ldots, Z_{n}$ via a series of subtraction-free rational transformations.  This means that for each vertex $\textbf{c} \in N(F_{j; t})$, the monomial in  $F_{j; t}$ with exponent vector $\textbf{c}$ will have a coefficient that can be expressed as a subtraction-free rational expression in $\ZZ[q^{\pm \frac{1}{2}}]$.  Thus, setting $q = 1$ will not shrink the Newton polytope.  Consequently,
\begin{eqnarray} \label{fpoly-newton-polytope}
N(F_{j; t}) = N(F_{j; t}^{B^{0}; t_{0}}),
\end{eqnarray}
where $F_{j; t}^{B^{0}; t_{0}}$ is the nonquantum $F$-polynomial.  Since $F_{j; t}$ is a polynomial in $u_{1}, \ldots, u_{n}$, the polytope $N(F_{j; t}^{B^{0}; t_{0}})$ does not contain any points with negative coordinates. The same is true of $N(F_{j; t})$, forcing $F_{j; t}$ to be a polynomial in $Z_{1}, \ldots, Z_{n}$.
\end{proof}

\begin{example} \label{example:simple-quantum-fpolys} For all $j \in [1, n]$, $F_{j; t_{0}} = 1$.  If $t_{0} \frac{k}{\hspace{1cm}} t$ in $\mathbb{T}_{n}$, then the $\gg$-vector recurrences (Proposition \ref{g-vec-rec-b}) imply that 
\begin{eqnarray}
\gg_{k; t} = -\ee_{k} + \sum_{i = 1}^{n} [-b_{ik}^{0}]_{+}\ee_{i}.
\end{eqnarray}
From equation (\ref{clustervareqn}), it follows that 
\begin{eqnarray}  \label{eqn:fpoly-simple}
F_{k; t} = q^{d_{k}/2}Z_{k} + 1.
\end{eqnarray}
For $j \neq k$, $F_{j; t} = 1$.
\end{example}

\section{Properties of Quantum $F$-polynomials} \label{section:quantum-f-polys-props}

We continue to use the same notation as in the previous section.     Part (2) of Theorem \ref{thm:quantum-fpoly} may be strengthened under a certain condition which is conjectured to be true in general.

\begin{theorem} \label{thm:no-q-factor} If the nonquantum $F$-polynomial $F_{j; t}^{B^{0}; t_{0}}$ has nonzero constant term for all $j \in [1, n]$, $t \in \mathbb{T}_{n}$, then for any cluster variable $X_{j; t} \in \mathcal{A}$, 
\begin{eqnarray} \label{fpoly-eqn-h2}
X_{j; t} = F_{j; t}(\hat{Y})M_{0}(\tilde{\gg}_{j; t}^{\tilde{B}^{0}; t_{0}}).
\end{eqnarray}
\end{theorem}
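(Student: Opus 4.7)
The plan is to induct on the distance $d$ of $t$ from $t_{0}$ in $\mathbb{T}_{n}$, proving $\lambda_{j;t}^{\tilde{B}^{0};t_{0}} = 0$. The base case $d = 0$ is immediate from $X_{j;t_{0}} = M_{0}(\ee_{j})$, $F_{j;t_{0}} = 1$, and $\tilde{\gg}_{j;t_{0}} = \ee_{j}$.

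For the inductive step, take $t'$ at distance $d+1$ adjacent to a vertex $t$ at distance $d$ via an edge labeled $k$. If $j \neq k$, then $X_{j;t'} = X_{j;t}$, $F_{j;t'} = F_{j;t}$, and $\tilde{\gg}_{j;t'} = \tilde{\gg}_{j;t}$, so $\lambda_{j;t'} = \lambda_{j;t} = 0$. For $j = k$, the recurrence (\ref{eq:lambda-rec}) in the proof of Theorem~\ref{thm:quantum-fpoly} yields $\lambda_{k;t'} = \rho - \rho_{\bullet} + \lambda'$, and the term $\lambda' = -\lambda_{k;t} + \sum_{i=1}^{n}[-b_{ik}^{t}]_{+}\lambda_{i;t}$ vanishes by the inductive hypothesis. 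The task therefore reduces to showing $\rho = \rho_{\bullet}$.

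To exploit the constant-term hypothesis, I would expand $X_{k;t'} = q^{\lambda_{k;t'}} F_{k;t'}(\hat{Y})M_{0}(\tilde{\gg}_{k;t'})$ as a Laurent polynomial in $X_{1}, \ldots, X_{m}$ using the quantum Laurent phenomenon (Theorem~\ref{thm:quantum-laurent}). The lowest term (with respect to the partial order induced by columns of $\tilde{B}^{0}$) equals $q^{\lambda_{k;t'}}c \cdot M_{0}(\tilde{\gg}_{k;t'})$, where $c \in \ZZ[q^{\pm 1/2}]$ is the constant term of $F_{k;t'}$. By part~(3) of Theorem~\ref{thm:quantum-fpoly}, $c|_{q=1}$ equals the constant term of $F_{k;t'}^{B^{0};t_{0}}$, which is nonzero by hypothesis; hence $c \neq 0$. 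Running the analogous expansion in $\prinA$, where part~(1) of Theorem~\ref{thm:quantum-fpoly} states the formula with no $q$-factor, gives bottom term $c \cdot M_{0}^{\bullet}(\gg_{k;t'})$. Since $F_{k;t'}$ is the same polynomial in both cases, a comparison using Lemma~\ref{lemma:rho-props} --- whose key consequence is that $\rho^{t}_{ij}$ vanishes whenever $i$ or $j$ lies in $[n+1,m]$, so that $\rho$ reduces to a combination of quantities depending only on $B^{0}$ and the $\gg$-vectors --- forces $q^{\lambda_{k;t'}} = 1$.

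The main obstacle will be the careful tracking of $q$-powers across the two Laurent expansions to justify the cancellation of the nonzero constant $c$. The hypothesis of nonzero constant term is essential: without it the lowest coefficient would vanish, removing the handle on $\lambda_{k;t'}$ and leaving no test element against which to compare the principal-coefficient computation.
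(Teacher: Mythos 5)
Your induction scheme, the reduction of the problem to $\lambda_{k;t'}=\rho-\rho_{\bullet}+\lambda'$ with $\lambda'=0$ by the inductive hypothesis, and the observation that the constant term $c$ of $F_{k;t'}$ is nonzero under the hypothesis (via part (3) of Theorem \ref{thm:quantum-fpoly}) all match the paper. The gap is the central step: you claim that comparing the bottom term $q^{\lambda_{k;t'}}c\,M_{0}(\tilde{\gg}_{k;t'})$ of the Laurent expansion in $\mathcal{A}$ with the bottom term $c\,M_{0}^{\bullet}(\gg_{k;t'})$ of the expansion in $\prinA$ ``forces $q^{\lambda_{k;t'}}=1$.'' These two expansions live in two different algebras with different toric frames, and there is no homomorphism or identification between them under which the two bottom coefficients are required to agree. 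The identity $X_{k;t'}=q^{\lambda_{k;t'}}F_{k;t'}(\hat{Y})M_{0}(\tilde{\gg}_{k;t'})$ is internally consistent in $\mathcal{A}$ for \emph{any} value of $\lambda_{k;t'}$; reading off its lowest coefficient simply returns $q^{\lambda_{k;t'}}c$ and produces no equation. (A secondary issue: for a general $\tilde{B}^{0}$ the columns $\tilde{\bb}^{1},\ldots,\tilde{\bb}^{n}$ need not be linearly independent, so the ``bottom term'' of $F_{k;t'}(\hat{Y})M_{0}(\tilde{\gg}_{k;t'})$ with respect to your partial order need not even be isolable.)

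What is missing is a constraint \emph{internal} to the single algebra $\mathcal{A}$ that detects the power of $q$, and this is exactly what the paper's Lemma \ref{lemma:rho-zero} supplies. There one takes the quasi-commutation relation $X_{i;t}X_{j;t}=q^{\Lambda_{t}(\ee_{i},\ee_{j})}X_{j;t}X_{i;t}$ between two cluster variables of the \emph{same} cluster, substitutes the expressions $F_{i;t}(\hat{Y})M_{0}(\gg_{i;t})$ for both, cancels the frame monomials using $M_{0}(\gg_{i;t})M_{0}(\gg_{j;t})=q^{\Lambda_{0}(\gg_{i;t},\gg_{j;t})}M_{0}(\gg_{j;t})M_{0}(\gg_{i;t})$, and then compares the (nonzero) constant terms of the two resulting elements of $\mathcal{F}(R)$ to conclude $\Lambda_{t}(\ee_{i},\ee_{j})=\Lambda_{0}(\gg_{i;t},\gg_{j;t})$, i.e.\ $\rho^{t}_{ij}=0$ for all $i,j$. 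This yields $\rho=\rho_{\bullet}=0$ and hence $\lambda_{k;t'}=0$. The nonzero-constant-term hypothesis is used precisely to compare the two sides of this single-algebra identity; your proposal invokes the hypothesis but never deploys it in a relation that actually constrains $\lambda_{k;t'}$, so the argument as written does not close.
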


\begin{proof} We need to show that $\lambda_{j; t} = 0$ (see (\ref{fpoly-eqn-h})).  Proceed by induction on the distance of the vertex $t$ from $t_{0}$ in $\mathbb{T}_{n}$, with $\lambda_{j; t_{0}} = 0$ already established in the proof of Theorem \ref{thm:quantum-fpoly}.  Let $t \frac{k}{\hspace{1cm}} t'$ in $\mathbb{T}_{n}$, and suppose that $\lambda_{j; t} = 0$ for all $j \in [1, n]$.  From the proof of Theorem \ref{thm:quantum-fpoly}, $\lambda_{j; t'} = \lambda_{j; t} = 0$ for $j \neq k$.  Note that $\lambda_{k; t'}$ is given by equations (\ref{lambda2}) and (\ref{eq:lambda-rec}).  To show that $\lambda_{k; t'} = 0$, it suffices to prove that $\rho = \rho_{\bullet} = 0$.  The proof follows by induction after the next lemma is proven.
\end{proof}

\noindent \begin{lemma} \label{lemma:rho-zero} Suppose that for all $j \in [1, n], t \in \mathbb{T}_{n}$, the nonquantum $F$-polynomial $F_{j; t} ^{B^{0}, t_{0}}$ has nonzero constant term.  Then $\Lambda_{t}(\ee_{i}, \ee_{j}) = \Lambda_{0}(\gg_{i; t}, \gg_{j; t})$,  and consequently, $\rho^{t}_{ij}(\tilde{B}^{0}) = 0$ for all $i, j \in [1, m]$.   
\end{lemma}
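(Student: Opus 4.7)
The plan is to induct on the distance $d(t_0, t)$ in $\mathbb{T}_n$ to establish the identity
\[
\Lambda_t(\ee_i, \ee_j) = \Lambda_0(\tilde{\gg}_{i;t}^{\tilde{B}^0; t_0}, \tilde{\gg}_{j;t}^{\tilde{B}^0; t_0}) \qquad \text{for all } i, j \in [1, m].
\]
The consequence $\rho^t_{ij}(\tilde{B}^0) = 0$ is then immediate from the definition of $\rho^t_{ij}(\tilde{B}^0)$ given in the proof of Theorem \ref{thm:quantum-fpoly}. The base case $t = t_0$ is trivial because $\tilde{\gg}_{\ell; t_0} = \ee_\ell$ and $\Lambda_{t_0} = \Lambda_0$.

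For the inductive step let $t \overunder{k}{} t'$. The matrix $E_\epsilon$ in (\ref{eq:Lambda-mutation}) fixes $\ee_i$ for $i \neq k$, and $\tilde{\gg}_{i; t'} = \tilde{\gg}_{i; t}$ for $i \neq k$ by (\ref{hvec-rec1}), so the inductive hypothesis transfers directly to every pair $(i, j)$ with $i, j \neq k$. The case $i = j = k$ is trivial, and by the skew-symmetry of both $\Lambda_{t'}$ and $\Lambda_0$ the remaining cases reduce to $i = k$, $j \neq k$.

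For this case I invoke the two-sign form of the $\tilde{\gg}$-recurrence supplied by Proposition \ref{g-vec-rec-b}. Its applicability rests on sign-coherence of the $c$-vector $c_k^t := (b^{\bullet,t}_{n+1,k}, \ldots, b^{\bullet,t}_{2n,k})$, which is precisely where the hypothesis enters: specializing $u_1 = \cdots = u_n = 0$ in the $F$-polynomial recurrence (\ref{fpoly-rec2}) shows that the nonzero constant term of $F_{k; t'}^{B^0; t_0}$ forces one of the two monomials $\prod_j u_j^{[\pm b^{\bullet,t}_{n+j,k}]_+}$ to be identically $1$, which is exactly sign-coherence of $c_k^t$ (compare Proposition 5.6 of \cite{coefficients}). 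I then choose $\epsilon \in \{+, -\}$ so that $[\epsilon b^{\bullet,t}_{n+\ell, k}]_+ = 0$ for every $\ell \in [1, n]$; the $\tilde{\bb}^j$-sum in (\ref{hvec-rec2}) vanishes, the recurrence collapses to
\[
\tilde{\gg}_{k; t'} = -\tilde{\gg}_{k; t} + \sum_{i=1}^{m} [\epsilon b^t_{ik}]_+ \tilde{\gg}_{i; t},
\]
and pairing both sides with $\tilde{\gg}_{j;t'} = \tilde{\gg}_{j;t}$ via $\Lambda_0$, substituting the inductive hypothesis $\Lambda_0(\tilde{\gg}_{a; t}, \tilde{\gg}_{b; t}) = \Lambda_t(\ee_a, \ee_b)$, and comparing with the mutation rule for $\Lambda_{t'}(\ee_k, \ee_j)$ (which is independent of sign choice by \cite[Proposition~3.4]{quantum}) closes the induction. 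The only substantive step, and the one place where the hypothesis is used, is the extraction of sign-coherence of $c$-vectors from the constant-term assumption; everything else is bookkeeping through the recurrences already assembled in the paper.
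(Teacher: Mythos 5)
Your route is genuinely different from the paper's. The paper never touches sign-coherence: it takes two cluster variables in the same cluster at $t$, writes each as $F_{i;t}(\hat Y)M_0(\gg_{i;t})$ via Theorem \ref{thm:quantum-fpoly}, plays the quasi-commutation relation $X_{i;t}X_{j;t}=q^{\lambda}X_{j;t}X_{i;t}$ against $M_0(\gg_{i;t})M_0(\gg_{j;t})=q^{\lambda'}M_0(\gg_{j;t})M_0(\gg_{i;t})$, and compares \emph{constant terms} of the resulting identity $q^{\lambda'}F_{i;t}P_j=q^{\lambda}F_{j;t}P_i$ (the hypothesis enters through \eqref{fpoly-newton-polytope}, which transfers the nonzero constant term from the classical to the quantum $F$-polynomial). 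Your argument is purely classical bookkeeping through the $\Lambda$- and $\tilde\gg$-mutation rules, with the hypothesis converted into sign-coherence of $c$-vectors; that extraction (specializing $u=0$ in \eqref{fpoly-rec2}) is correct and is indeed FZ's Proposition 5.6.

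There is, however, a real gap at the step ``I invoke the two-sign form of the $\tilde\gg$-recurrence supplied by Proposition \ref{g-vec-rec-b}.'' That proposition only covers ordinary $\gg$-vectors in the \emph{principal} case; for a general $\tilde B^0$ the paper establishes only the $\epsilon=-$ recurrence \eqref{hvec-rec2} for extended $\gg$-vectors. So in the branch where the $c$-vector $c_k^t$ is nonpositive (and nonzero), you cannot simply switch to $\epsilon=+$ to kill the $\tilde\bb^j$-sum: the $\epsilon=+$ version of \eqref{hvec-rec2} for extended $\gg$-vectors is an additional nontrivial identity (its bottom $m-n$ coordinates encode a relation between the coefficient rows of $\tilde B^t$, the $c$-vectors, and the coefficient rows of $\tilde B^0$ that is nowhere proved here). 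Concretely, sticking with $\epsilon=-$ and pairing with $\Lambda_0$, the compatibility relation \eqref{quasicomm1} reduces the obstruction to $\sum_{\ell}[-b^{\bullet,t}_{\ell+n,k}]_+\,g_\ell d_\ell=0$ with $(g_1,\dots,g_n)=\gg_{j;t}$ — exactly the quantity \eqref{eqnlem4} — and when $c_k^t\le 0$ this is a genuine $c$-vector/$g$-vector orthogonality that sign-coherence alone does not give you. The repair is to run your induction only in the principal case (where Proposition \ref{g-vec-rec-b} legitimately provides both signs, and where it does close cleanly), deduce \eqref{eqnlem4} from \eqref{rho-rec2}, and then handle general $\tilde B^0$ by the recurrences of Lemma \ref{lemma:rho-props} — which is precisely how the paper passes from the principal to the general case.
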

\begin{proof} First, assume that $\tilde{B}^{0}$ is principal.  Let 
\begin{eqnarray}
\lambda & = & \Lambda_{t}(\ee_{i}, \ee_{j}) \\
\lambda' & = & \Lambda_{0}(\gg_{i; t}, \gg_{j; t})
\end{eqnarray}
Use the convention that $F_{\ell; t} = 1$ if $\ell \in [n + 1, m]$.  
The expressions for $X_{i; t}$, $X_{j; t}$ from (\ref{fpoly-eqn-g}) imply that 
\begin{eqnarray}
F_{i; t}(\hat{Y})M_{0}(\gg_{i; t}) F_{j; t}(\hat{Y})M_{0}(\gg_{j; t}) = q^{\lambda}F_{j; t}(\hat{Y})M_{0}(\gg_{j; t}) F_{i; t}(\hat{Y})M_{0}(\gg_{i; t}). 
\end{eqnarray}
For some $P_{i}, P_{j} \in \mathcal{F}(R)$, 
\begin{eqnarray} \label{eqnlem1}
F_{i; t}(\hat{Y}) P_{j}(\hat{Y}) M_{0}(\gg_{i; t}) M_{0}(\gg_{j; t}) = q^{\lambda} F_{j; t}(\hat{Y}) P_{i}(\hat{Y}) M_{0}(\gg_{j; t}) M_{0}(\gg_{i; t}).
\end{eqnarray}
Since $M_{0}(\gg_{i; t}) M_{0}(\gg_{j; t}) = q^{\lambda'} M_{0}(\gg_{j; t}) M_{0}(\gg_{i; t}),$ it follows that 
\begin{eqnarray} \label{eqnlem2}
q^{\lambda'} F_{i; t}(\hat{Y}) P_{j}(\hat{Y}) = q^{\lambda} F_{j; t}(\hat{Y}) P_{i}(\hat{Y}).
\end{eqnarray}
Since $\tilde{B}^{0}$ has full rank, $\hat{Y}_{1}, \ldots, \hat{Y}_{n}$ are algebraically independent.  Thus, 
\begin{eqnarray} \label{eqnlem3}
q^{\lambda'}F_{i; t}P_{j} = q^{\lambda}F_{j; t}P_{i}.
\end{eqnarray}
By (\ref{fpoly-newton-polytope}), the fact that $F_{i; t} ^{B^{0}, t_{0}}$ has nonzero constant term implies that same thing about $F_{i; t} ^{B^{0}, D, \Lambda_{0}, t_{0}}$.  Observe that $F_{j; t}$ (resp. $F_{i; t}$) has the same constant term as $P_{j}$ (resp. $P_{i}$).  Considering the constant term of both sides of (\ref{eqnlem3}),  we conclude that $\lambda = \lambda'$.  As a consequence, (\ref{rho-rec2}) implies that for any $t \in \mathbb{T}_{n}$, $i, k \in [1, n]$, $i \neq k$, 
\begin{eqnarray} \label{eqnlem4}
\sum_{\ell = 1}^{n} [-b^{\bullet, t}_{\ell + n, k}]_{+} g_{\ell} d_{\ell} = 0,
\end{eqnarray}
where $\gg_{i; t}^{B^{0}; t_{0}} = (g_{1}, \ldots, g_{n})$.

To prove that $\rho^{t}_{ij}(\tilde{B}^{0}) = 0$ in the general case where $\tilde{B}^{0}$ is any exchange matrix with principal part $B^{0}$, apply induction on the distance of $t$ from $t_{0}$ in $\mathbb{T}_{n}$, and use Lemma \ref{lemma:rho-props} and (\ref{eqnlem4}).
\end{proof}

\begin{remark} It was conjectured that nonquantum $F$-polynomials always have constant term 1 in \cite[Conjecture 5.4]{coefficients}.  This conjecture was proven in \cite{qps2} for the case when $B^{0}$ is skew-symmetric, and also in \cite{fukeller} in the situation where the cluster algebra admits a certain categorification.
\end{remark}

Clearly, one can prove the existence and uniqueness of analogous "right" quantum $F$-polynomials by similar arguments as given in the proof of Theorem \ref{thm:quantum-fpoly} .  However, the next proposition demonstrates that these polynomials can easily be computed once the "left" $F$-polynomials are known.  Following \cite[Section 6]{quantum}, define the $\ZZ$-linear \textit{bar-involution} $X \mapsto \overline{X}$ on a quantum cluster algebra $\mathcal{A}$ with initial quantum seed $(M_{0}, \tilde{B}^{0})$ by setting
\begin{eqnarray}
\overline{q^{r/2}M_{0}(\textbf{c})} = q^{-r/2}M_{0}(\textbf{c}) \hspace{0.5cm} (r \in \ZZ, \textbf{c} \in \ZZ^{m}) 
\end{eqnarray}
We may also define a $\ZZ$-linear \textit{bar-involution} $X \mapsto \overline{X}$ on $R_{B^{0}, D}$ by
\begin{eqnarray}
\overline{q^{r/2}Z^{\textbf{c}}} = q^{-r/2}Z^{\textbf{c}} \hspace{0.5cm} (r \in \ZZ, \textbf{c} \in \ZZ^{n})
\end{eqnarray}
For $X, Y \in \mathcal{A}$ (resp. $X, Y \in R_{B^{0}, D}$), it's easy to verify that $\overline{XY} = \overline{Y}\hspace{0.1cm}\overline{X}$.

\begin{proposition} \label{prop:fpoly-right} Let $j \in [1, n]$, $t \in \mathbb{T}_{n}$.  Then $\overline{F}_{j; t}$ is the unique polynomial in $Z_{1}, \ldots, Z_{n}$ with coefficients in $\ZZ[q^{\pm \frac{1}{2}}]$ such that $X_{j; t} \in \prinA(B^{0}, D, \Lambda, t_{0})$ is given by 
\begin{eqnarray} \label{eq:fpoly-right}
X_{j; t} = M_{0}(\gg_{j; t})\overline{F_{j; t}}(\hat{Y}).
\end{eqnarray}
Furthermore, for any quantum cluster algebra $\mathcal{A}$ whose initial exchange matrix is $\tilde{B}^{0}$, the cluster variable $X_{j; t} \in \mathcal{A}$ is given by
\begin{eqnarray} \label{eqn1}
X_{j; t} = q^{-\lambda^{\tilde{B}^{0}, t_{0}}_{j; t}} M_{0}(\tilde{\gg}^{\tilde{B}^{0}, t_{0}}_{j; t})\overline{F}_{j; t}(\hat{Y}).  
\end{eqnarray}
Setting $q = 1$ and $Z_{i} = u_{i}$ in $\overline{F}_{j; t}$ yields the nonquantum $F$-polynomial $F^{B^{0}; t_{0}}_{j; t}$.
\end{proposition}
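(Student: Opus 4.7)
The plan is to apply the bar-involution to the defining equation \eqref{fpoly-eqn-h} of the left quantum $F$-polynomial from Theorem \ref{thm:quantum-fpoly}, and exploit the bar-invariance of cluster variables in a quantum cluster algebra. Recall that this bar-invariance, $\overline{X_{j;t}} = X_{j;t}$, follows from Berenstein--Zelevinsky \cite{quantum}: the quantum exchange relation in Proposition \ref{prop:quantum-exchrel} expresses $X'_k$ as a sum of two bar-invariant monomials $M(-\ee_k + \sum [\pm b_{ik}]_+ \ee_i)$, and this property is preserved under iterated mutation starting from the manifestly bar-invariant initial cluster.

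The key preliminary step is to verify that the substitution $Z_i \mapsto \hat{Y}_i$ intertwines the two bar-involutions, i.e., $\overline{F(\hat{Y})} = \overline{F}(\hat{Y})$ for every $F \in R_{B^0,D}$. This amounts to noting that the $\ZZ[q^{\pm 1/2}]$-algebra homomorphism $\phi: R_{B^0,D} \to \mathcal{F}$ sending $Z_i$ to $\hat{Y}_i$ is well-defined (since the $\hat{Y}_i$ obey the same quasi-commutation relations as the $Z_i$ by \eqref{yhat-comm1}), and that $\phi$ sends each basis element $Z^{\cc}$ to a bar-invariant element of $\mathcal{A}$. The latter follows because the $q^{1/2}$-power appearing in the definition \eqref{eqn:Z-monomial} of $Z^{\cc}$ exactly matches the one needed to rewrite $\hat{Y}_1^{c_1}\cdots \hat{Y}_n^{c_n}$ as a scalar multiple of $M_0(\sum c_i \tilde{\bb}^i)$, and $M_0(\cdot)$ is bar-invariant by definition of the involution on $\mathcal{A}$.

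Granting these facts, I would now apply the bar-involution to \eqref{fpoly-eqn-h}. Using anti-multiplicativity $\overline{XY} = \overline{Y}\,\overline{X}$, centrality of $q$, bar-invariance of $M_0(\tilde{\gg}_{j;t}^{\tilde{B}^0;t_0})$, and the intertwining property just established,
\begin{equation*}
X_{j;t} \;=\; \overline{X_{j;t}} \;=\; \overline{q^{\lambda_{j;t}}F_{j;t}(\hat{Y})M_0(\tilde{\gg}_{j;t}^{\tilde{B}^0;t_0})} \;=\; q^{-\lambda_{j;t}} M_0(\tilde{\gg}_{j;t}^{\tilde{B}^0;t_0})\,\overline{F_{j;t}}(\hat{Y}),
\end{equation*}
which is \eqref{eqn1}. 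Specializing to the case $\tilde{B}^{0}$ principal (so $\lambda_{j;t}=0$) gives \eqref{eq:fpoly-right}. Uniqueness of $\overline{F_{j;t}}$ as a polynomial in $Z_1,\dots,Z_n$ then follows immediately from the algebraic independence of $\hat{Y}_1,\dots,\hat{Y}_n$ in $\prinA(B^0,D,\Lambda,t_0)$ (a consequence of $\tilde{B}^0$ having full rank in the principal coefficient case): any alternative $G$ with the same property would satisfy $G(\hat{Y}) = \overline{F_{j;t}}(\hat{Y})$, forcing $G = \overline{F_{j;t}}$. For the classical limit, the bar-involution is the identity when $q=1$, so $\overline{F_{j;t}}|_{q=1} = F_{j;t}|_{q=1}$, which equals $F_{j;t}^{B^0;t_0}$ by part (3) of Theorem \ref{thm:quantum-fpoly}.

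The only place where care is genuinely required is the intertwining claim $\overline{F(\hat{Y})} = \overline{F}(\hat{Y})$, because the bar-involution on $R_{B^0,D}$ is defined on basis elements $Z^{\cc}$ rather than on the ordered monomials $Z_1^{c_1}\cdots Z_n^{c_n}$, and one must check that no stray $q^{r/2}$ factors get picked up upon expanding in either basis. Everything else is bookkeeping once bar-invariance of cluster variables and the principal-case formula \eqref{fpoly-eqn-g} are in hand.
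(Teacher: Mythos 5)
Your proposal is correct and follows essentially the same route as the paper: apply the bar-involution to \eqref{fpoly-eqn-h}, invoke bar-invariance of cluster variables (Proposition 6.2 of Berenstein--Zelevinsky), use anti-multiplicativity and bar-invariance of $M_0(\cdot)$, and then specialize to the principal case and to $q=1$. The only difference is that you spell out the intertwining identity $\overline{F(\hat{Y})}=\overline{F}(\hat{Y})$ (via the matching of $q^{1/2}$-powers between $Z^{\cc}$ and $M_0(\sum c_i\tilde{\bb}^i)$), which the paper uses silently; your verification of it is accurate.
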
 
\begin{proof}  By \cite[Proposition 6.2]{quantum}, if $(M, \tilde{B})$ is some quantum seed associated with $\mathcal{A}$, then $M(\textbf{c})$ is invariant under the bar-involution for any $\textbf{c} \in \ZZ^{2n}$.   In particular, cluster variables are invariant under the bar-involution.  Now apply the bar-involution to both sides of (\ref{fpoly-eqn-h}):
\begin{eqnarray} 
X_{j; t} = q^{-\lambda_{j; t}^{\tilde{B}^{0}, t_{0}}}M_{0}(\tilde{\gg}^{\tilde{B}^{0}_{j; t}})\overline{F_{j; t}(\hat{Y})} = q^{-\lambda_{j; t}^{\tilde{B}^{0}, t_{0}}}M_{0}(\tilde{\gg}^{\tilde{B}^{0}; t_{0}}_{j; t})\overline{F_{j; t}}(\hat{Y}). 
\end{eqnarray}
Equation (\ref{eq:fpoly-right}) follows from the fact that $\lambda_{j; t}^{\tilde{B}^{0}, t_{0}} = 0$ and $\tilde{\gg}_{j; t}^{\tilde{B}^{0}; t_{0}} = \gg_{j; t}$ when $\tilde{B}^{0}$ is principal. 
Applying the bar-involution to $F_{j; t}$ merely multiplies the coefficient of each monomial $Z^{\aa}$ in $F_{j; t}$ by $q^{c}$ for some $c \in \frac{1}{2}\ZZ$, so the last assertion is proven.
\end{proof}

For $\aa = (a_{1}, \ldots, a_{n}), \bb = (b_{1}, \ldots, b_{n}), \cc = (c_{1}, \ldots, c_{n}) \in \ZZ^{n}$, define the notation 
\begin{eqnarray}
\aa \cdot \bb \cdot \cc = \sum_{i = 1}^{n} a_{i}b_{i}c_{i} \in \ZZ.
\end{eqnarray}

We obtain the following result relating the $\gg$-vector $\gg_{j; t}$ and the coefficients of $F_{j; t}$.
 
\begin{corollary} \label{cor:fpoly-coeff} Let $j \in [1, n]$, $t \in \mathbb{T}_{n}$.  For $\aa \in \ZZ^{n}$,  let $P_{\aa} \in \ZZ[x, x^{-1}]$ such that $P_{\aa}(q^{\frac{1}{2}})$ is the coefficient of $Z^{\aa}$ in the quantum $F$-polynomial $F_{j; t}$.   Also, let $\dd = (d_{1}, \ldots, d_{n})$.

Then
\begin{eqnarray}
P_{\aa}(q^{\frac{1}{2}}) = q^{-\gg_{j; t} \cdot \aa \cdot \dd}P_{\aa}(q^{-\frac{1}{2}}).
\end{eqnarray}
In particular, if $P_{\aa}(q) = q^{c}$ for some $c \in \frac{1}{2}\ZZ$, then $c = -\frac{\gg_{j; t}\cdot \aa \cdot \dd}{2}$.
\end{corollary}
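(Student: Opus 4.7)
The plan is to derive the symmetry by comparing the ``left'' and ``right'' expressions for $X_{j;t}$ given by Theorem \ref{thm:quantum-fpoly}(1) and Proposition \ref{prop:fpoly-right}, and exploiting the bar-invariance of each basis element $Z^{\aa}$. In $\prinA(B^0, D, \Lambda, t_0)$ we have
\begin{eqnarray*}
X_{j;t} \;=\; F_{j;t}(\hat{Y})\, M_0(\gg_{j;t}) \;=\; M_0(\gg_{j;t})\, \overline{F}_{j;t}(\hat{Y}).
\end{eqnarray*}
My goal is to push $M_0(\gg_{j;t})$ past the $\hat{Y}^{\aa}$ via the quasi-commutation relations, then read off an identity on the coefficients $P_{\aa}$.

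Write $F_{j;t} = \sum_{\aa} P_{\aa}(q^{1/2}) Z^{\aa}$. Since the bar-involution on $R_{B^0, D}$ is defined by $\overline{q^{r/2} Z^{\aa}} = q^{-r/2} Z^{\aa}$, each $Z^{\aa}$ is bar-invariant and hence $\overline{F}_{j;t} = \sum_{\aa} P_{\aa}(q^{-1/2}) Z^{\aa}$. The compatibility relation (\ref{quasicomm1}), specialized to principal $\tilde{B}^0$, gives $\Lambda_0(\gg_{j;t}, \tilde{\bb}^i) = -g_i d_i$ for $\gg_{j;t} = (g_1, \ldots, g_n)$ extended by $n$ zeros, so by (\ref{eq:toric-product})
\begin{eqnarray*}
\hat{Y}_i\, M_0(\gg_{j;t}) \;=\; q^{g_i d_i}\, M_0(\gg_{j;t})\, \hat{Y}_i,
\end{eqnarray*}
and iterating yields $\hat{Y}^{\aa}\, M_0(\gg_{j;t}) = q^{\gg_{j;t} \cdot \aa \cdot \dd}\, M_0(\gg_{j;t})\, \hat{Y}^{\aa}$.

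Substituting into $F_{j;t}(\hat{Y})\, M_0(\gg_{j;t}) = M_0(\gg_{j;t})\, \overline{F}_{j;t}(\hat{Y})$ and cancelling the invertible element $M_0(\gg_{j;t})$ on the left yields
\begin{eqnarray*}
\sum_{\aa} P_{\aa}(q^{1/2})\, q^{\gg_{j;t} \cdot \aa \cdot \dd}\, \hat{Y}^{\aa} \;=\; \sum_{\aa} P_{\aa}(q^{-1/2})\, \hat{Y}^{\aa}.
\end{eqnarray*}
The algebraic independence of $\hat{Y}_1, \ldots, \hat{Y}_n$ (used repeatedly in the proof of Theorem \ref{thm:quantum-fpoly}) lets me match coefficients of each $Z^{\aa}$, giving the claimed identity $P_{\aa}(q^{1/2}) = q^{-\gg_{j;t}\cdot\aa\cdot\dd} P_{\aa}(q^{-1/2})$. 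For the ``in particular'' assertion, substituting $P_{\aa}(q^{1/2}) = q^c$ reduces to $q^{2c} = q^{-\gg_{j;t}\cdot\aa\cdot\dd}$, forcing $c = -\gg_{j;t}\cdot\aa\cdot\dd/2$. The only real bookkeeping hurdle is correctly pinning down the sign $\Lambda_0(\gg_{j;t},\tilde{\bb}^i) = -g_id_i$ from the compatibility condition; once that is in hand, the remainder of the argument is a direct matching of coefficients.
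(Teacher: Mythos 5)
Your proof is correct and follows essentially the same route as the paper: both compare the left form $F_{j;t}(\hat{Y})M_0(\gg_{j;t})$ with the right form $M_0(\gg_{j;t})\overline{F}_{j;t}(\hat{Y})$ from Proposition \ref{prop:fpoly-right}, commute $M_0(\gg_{j;t})$ past the monomials $\hat{Y}^{\aa}$ using the compatibility relation, and match coefficients. The paper phrases the final step as an appeal to the uniqueness of the quantum $F$-polynomial rather than cancelling $M_0(\gg_{j;t})$ and invoking algebraic independence of the $\hat{Y}_i$, but these are the same argument.
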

\begin{proof}
Using (\ref{eq:fpoly-right}) with (\ref{quasicomm2}), 
\begin{eqnarray}
M_{0}(\gg_{j; t})\overline{F}_{j; t}(\hat{Y}) & =  & \sum_{\aa \in \ZZ^{n}} M_{0}(\gg_{j; t})P_{\aa}(q^{-\frac{1}{2}})Z^{\aa} \\
& = & \left(\sum_{\aa \in \ZZ^{n}} q^{- \gg_{j; t} \cdot \aa \cdot \dd}P_{\aa}(q^{-\frac{1}{2}})Z^{\aa}\right) M_{0}(\gg_{j; t}).
\end{eqnarray}
The uniqueness of the quantum $F$-polynomial implies that 
\begin{eqnarray}
F_{j; t} = \sum_{\aa \in \ZZ^{n}} q^{-\gg_{j; t} \cdot \aa \cdot \dd}P_{\aa}(q^{-\frac{1}{2}})Z^{\aa}.
\end{eqnarray}
 The coefficient of $Z^{\aa}$ in this expression is equal to $q^{- \gg_{j; t} \cdot \aa \cdot \dd}P_{\aa}(q^{-\frac{1}{2}})$.
\end{proof}

The last result of this section is a recurrence relation for the quantum $F$-polynomials.   For the remainder of this section, assume that $\prinA = \prinA(B^{0}, D, \Lambda, t_{0})$ for some $n \times n$ skew-symmetric integer matrix $\Lambda$.  Let $\mathcal{R} = \mathbb{Z}[q^{\pm \frac{1}{2}}, Z_{1}^{\pm 1}, \ldots, Z_{n}^{\pm 1}]$.
For $\textbf{a} = (a_{1}, \ldots, a_{n}) \in \mathbb{Z}^{n}$, define a
$\ZZ[q^{\pm \frac{1}{2}}]$-linear operator $L[\textbf{a}]: \mathcal{R} \rightarrow \mathcal{R}$ by setting
\begin{eqnarray} \label{def:L-function}
L[\textbf{a}](Z^{\textbf{b}}) = q^{-\frac{1}{2}(\aa \cdot \bb \cdot \dd)}Z^{\textbf{b}},
\end{eqnarray}
for $\textbf{b} = (b_{1}, \ldots, b_{n}) \in \mathbb{Z}^{n}$.

The next lemma gives some basic properties of the operator $L[\aa]$.

\begin{lemma} Let $\aa, \cc \in \ZZ^{n}$, $F, G \in \mathcal{R}$.  Then 
\begin{enumerate}
\item[(1)] $F' = L[\aa](F)$ is the unique element of $\mathcal{R}$ such that
\begin{eqnarray} \label{L-eqn}
M_{0}(\aa)  F(\hat{Y})   = F'(\hat{Y}) M_{0}(\aa).
\end{eqnarray}
\item[(2)] $L[\aa + \cc] = L[\aa]\circ L[\cc] = L[\cc] \circ L[\aa]$
\item[(3)] $L[\aa](FG) = L[\aa](F)\cdot L[\aa](G)$
\end{enumerate}
\end{lemma}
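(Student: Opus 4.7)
The plan is to establish (1) first by direct computation on monomials, and then deduce (2) and (3) either immediately from the explicit formula or by combining (1) with the uniqueness it asserts. All three statements are $\basering$-linear in $F$ (and bilinear for (3)), so it suffices to verify them on Laurent monomials $Z^{\bb}$.

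For (1), the key computation is to quasi-commute $M_{0}(\aa)$ past each $\hat{Y}_{j}$, where $\aa\in\ZZ^{n}$ is viewed as an element of $\ZZ^{2n}$ by appending $n$ zeros. The compatibility condition for $(\Lambda_{0},\tilde{B}^{0})$ in the principal-coefficient setting yields $\Lambda_{0}(\tilde{\bb}^{j},\ee_{i})=\delta_{ij}d_{j}$ for $i\in[1,n]$, so $\Lambda_{0}(\tilde{\bb}^{j},\aa)=a_{j}d_{j}$. From the toric-frame commutation rule $M_{0}(\tilde{\bb}^{j})M_{0}(\aa)=q^{\Lambda_{0}(\tilde{\bb}^{j},\aa)}M_{0}(\aa)M_{0}(\tilde{\bb}^{j})$ I then read off
$$M_{0}(\aa)\,\hat{Y}_{j}=q^{-a_{j}d_{j}}\,\hat{Y}_{j}\,M_{0}(\aa).$$
Pushing $M_{0}(\aa)$ through the ordered product $\hat{Y}_{1}^{b_{1}}\cdots\hat{Y}_{n}^{b_{n}}$ accumulates the factor $q^{-\aa\cdot\bb\cdot\dd}$, and the scalar prefactor appearing in the definition of $Z^{\bb}$ (and hence in $\hat{Y}^{\bb}$) commutes through trivially. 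This produces an explicit $F'\in\mathcal{R}$ satisfying (\ref{L-eqn}) whose coefficient on $Z^{\bb}$ matches the one prescribed by $L[\aa]$. Uniqueness of $F'$ in (\ref{L-eqn}) comes from the algebraic independence of $\hat{Y}_{1},\ldots,\hat{Y}_{n}$, which was already exploited in the proof of Theorem~\ref{thm:quantum-fpoly}.

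Part (2) is immediate once (1) is in hand: both operators act diagonally on the $\basering$-basis $\{Z^{\bb}\}$ with eigenvalue a power of $q$ whose exponent is linear in the first slot of the product $\aa\cdot\bb\cdot\dd$, so additivity and symmetry follow from bilinearity of this product in $\aa$. For (3), the slickest route is to apply (1) twice and exploit the fact that the substitution map $F\mapsto F(\hat{Y})$ is an algebra homomorphism from $\mathcal{R}$ into the ambient skew-field:
$$M_{0}(\aa)\,F(\hat{Y})\,G(\hat{Y})=L[\aa](F)(\hat{Y})\,M_{0}(\aa)\,G(\hat{Y})=L[\aa](F)(\hat{Y})\,L[\aa](G)(\hat{Y})\,M_{0}(\aa),$$
while on the other hand $M_{0}(\aa)(FG)(\hat{Y})=L[\aa](FG)(\hat{Y})\,M_{0}(\aa)$. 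Cancelling $M_{0}(\aa)$ and invoking algebraic independence of the $\hat{Y}_{i}$ forces $L[\aa](FG)=L[\aa](F)\cdot L[\aa](G)$.

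The only real obstacle is the bookkeeping of $q$-factors in (1): one must track the prefactor in (\ref{eqn:Z-monomial}) carefully, correctly apply the compatibility identity to extract the exponent $-a_{j}d_{j}$ in the commutation of $M_{0}(\aa)$ past $\hat{Y}_{j}$, and confirm that after summing over the $n$ commutations the total exponent matches the one recorded in (\ref{def:L-function}). Once this scalar bookkeeping is done, parts (2) and (3) are essentially formal consequences.
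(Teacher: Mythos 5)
Your proposal is correct and takes essentially the same route as the paper: part (1) is verified on monomials $Z^{\bb}$ via the quasi-commutation of $M_{0}(\aa)$ with the $\hat{Y}_{j}$ (the paper's \eqref{quasicomm2}, itself a restatement of the compatibility condition) and extended by linearity, uniqueness comes from algebraic independence of $\hat{Y}_{1},\ldots,\hat{Y}_{n}$, and part (3) is deduced exactly as in the paper by applying (1) twice to $M_{0}(\aa)F(\hat{Y})G(\hat{Y})$ and cancelling $M_{0}(\aa)$. One bookkeeping remark: your commutation computation correctly yields the total exponent $-\aa\cdot\bb\cdot\dd$, which is what \eqref{L-eqn} forces, but this is twice the exponent $-\frac{1}{2}(\aa\cdot\bb\cdot\dd)$ printed in \eqref{def:L-function} --- the mismatch lies in the paper's stated definition (compare the full exponent $q^{-\gg_{j;t}\cdot\aa\cdot\dd}$ appearing in the proof of Corollary \ref{cor:fpoly-coeff}), so you should not assert without comment that your exponent ``matches'' \eqref{def:L-function} as written.
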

\begin{proof}
For (1), use (\ref{quasicomm2}) to show first that (\ref{L-eqn}) holds when $F = Z^{\bb}$, then extend linearly.  Uniqueness follows from the fact that $\hat{Y}_{1}, \ldots, \hat{Y}_{n}$ are algebraically independent when $\tilde{B}^{0}$ is principal.  Statement (2) is a straightforward check.  For (3), let $H = L[\aa](FG)$, $F' = L[\aa](F)$, and $G' = L[\aa](G)$.  Applying part (1) two different ways gives 
\begin{eqnarray}
H(\hat{Y})M_{0}(\aa) = M_{0}(\aa)F(\hat{Y})G(\hat{Y}) = F'(\hat{Y})G'(\hat{Y})M_{0}(\aa).
\end{eqnarray}
Thus, $H(\hat{Y}) = F'(\hat{Y})G'(\hat{Y})$, which implies (3).
\end{proof}

Suppose $t \frac{k}{\hspace{1cm}} t'$ in $\mathbb{T}_{n}$.  Write $\tilde{B}^{t} = (b_{ij})$ for the exchange matrix at $t$.   For $j \in [1, n]$, $j \neq k$, $\epsilon \in \{ + , - \}$, let 
\begin{eqnarray}
G_{j}^{(\epsilon)} = \prod^{\rightarrow}_{i \in [1, [\epsilon b_{jk}]_{+}]} L[(i - 1)\gg_{j; t}](F_{j; t})
\end{eqnarray}
if $\epsilon b_{jk} \geq 1$; $G_{j}^{(\epsilon)} = 1$ otherwise.  Next, for $j \neq k$, let 
\begin{eqnarray}
\hat{F}_{j; t}^{\{ [\epsilon b_{jk}]_{+} \}} = L\left[-\gg_{k; t} + \displaystyle \sum_{i \in [1, j - 1]} [\epsilon b^{t}_{ik}]_{+} \gg_{i; t}\right](G_{j}^{(\epsilon)}).
\end{eqnarray}
Also, let
\begin{eqnarray}
\hat{F}_{k; t}^{\{-1\}} = (L[-\gg_{k; t}](F_{k; t}))^{-1}.
\end{eqnarray}
For $\epsilon \in \{ +, - \}$, let $\rho^{\epsilon}$ equal

\

$\frac{1}{2}\left(-\sum_{i = 1}^{2n} [\epsilon b_{ik}^{t}]_{+} \rho_{ik}^{t} + \sum_{1 \leq i < j \leq n} [\epsilon b^{t}_{ik}]_{+} [\epsilon b_{jk}^{t}]_{+} \rho_{ji}^{t} + \sum_{1 \leq i, j \leq n} [\epsilon b_{ik}^{t}]_{+} [\epsilon b_{j + n, k}^{t}]_{+} \rho_{n + j, i}^{t} \right).$

\

\noindent Finally, let 
\begin{eqnarray}
\lambda^{\epsilon} = \frac{1}{2}\sum_{i = 1}^{n} [\epsilon b_{n + i, k}^{t}]_{+} g_{i}' d_{i},
\end{eqnarray} 
where $\gg_{k; t'} = (g_{1}', \ldots, g_{n}')$.

\begin{theorem} \label{thm:quantum-fpoly-rec} The quantum $F$-polynomials $F_{j; t}$ are given by the following recurrence relations: 

The initial quantum $F$-polynomials are given by 
\begin{eqnarray} \label{fpoly-rec-init}
F_{j; t_{0}} = 1.
\end{eqnarray}

Suppose $t \frac{k}{\hspace{1cm}} t'$ in $\mathbb{T}_{n}$.  Then 
\begin{eqnarray} \label{fpoly-rec1}
F_{j; t'} = F_{j; t} \mbox{ if } j \neq k.
\end{eqnarray}

Using the notation given above, the quantum $F$-polynomial $F_{k; t'}$ is
\begin{eqnarray} \label{quantum-fpoly-rec}
F_{k; t'} & = & q^{(\rho^{+} - \lambda^{+})} \hat{F}_{k; t}^{\{-1\}} \left( \prod^{\rightarrow}_{i \in [1, n]} \hat{F}_{i; t}^{ \{ [b_{ik}]_{+} \} } \right) Z^{(\sum_{\ell = 1}^{n} [b_{n + \ell, k}]_{+} e_{\ell})} \\
\nonumber & & \hspace{1cm} + q^{(\rho^{-} - \lambda^{-})}\hat{F}_{k; t}^{\{-1\}} \left( \prod^{\rightarrow}_{i \in [1, n]} \hat{F}_{i; t}^{ \{ [-b_{ik}]_{+} \} } \right) Z^{(\sum_{\ell = 1}^{n} [- b_{n + \ell, k}]_{+} e_{\ell})}.
   \end{eqnarray}
\end{theorem}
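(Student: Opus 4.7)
The plan is to mimic the structure of the proof of Theorem \ref{thm:quantum-fpoly}: apply the quantum exchange relation from Proposition \ref{prop:quantum-exchrel} to $X_{k;t'}$, substitute the inductive expressions for the cluster variables at $t$, and then use the operator $L[\aa]$ together with the three properties of the preceding lemma to systematically move every $M_{0}$-factor to the right and every $\hat Y$-factor to the left. By the uniqueness clause of Theorem \ref{thm:quantum-fpoly}(1), the resulting "$\hat Y$-coefficient" of $M_{0}(\gg_{k;t'})$ must be $F_{k;t'}$, which will give the recurrence.

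The initial condition (\ref{fpoly-rec-init}) is immediate from $X_{j;t_{0}} = M_{0}(\ee_{j})$ and $\gg_{j;t_{0}} = \ee_{j}$, while (\ref{fpoly-rec1}) follows from $X_{j;t'} = X_{j;t}$ and $\gg_{j;t'} = \gg_{j;t}$ together with uniqueness. For (\ref{quantum-fpoly-rec}), Proposition \ref{prop:quantum-exchrel} yields
\[
X_{k;t'} = M_{t}(\vv^{+}) + M_{t}(\vv^{-}), \qquad \vv^{\epsilon} = -\ee_{k} + \sum_{i=1}^{m}[\epsilon b_{ik}^{t}]_{+}\ee_{i},
\]
and I will treat each summand separately. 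Splitting $\vv^{\epsilon}$ into its coordinate pieces and repeatedly applying $M_{t}(\cc)M_{t}(\dd) = q^{\Lambda_{t}(\cc,\dd)/2}M_{t}(\cc+\dd)$ (together with $M_{t}(\ee_{i}) = M_{0}(\ee_{i})$ for $i>n$) rewrites
\[
M_{t}(\vv^{\epsilon}) \;=\; q^{\,\rho_{1}^{\epsilon}}\, X_{k;t}^{-1}\!\left(\prod^{\rightarrow}_{i\in[1,n]} X_{i;t}^{[\epsilon b_{ik}^{t}]_{+}}\right) M_{0}\!\Big(\sum_{\ell=1}^{n}[\epsilon b_{n+\ell,k}^{t}]_{+}\,\ee_{n+\ell}\Big),
\]
with $\rho_{1}^{\epsilon} \in \tfrac{1}{2}\ZZ$ recording the accumulated reordering exponent.

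Now invoke the inductive hypothesis (equation (\ref{fpoly-eqn-g})) to replace each $X_{i;t}$ by $F_{i;t}(\hat Y)M_{0}(\gg_{i;t})$. The identity $M_{0}(\aa)F(\hat Y) = L[\aa](F)(\hat Y)M_{0}(\aa)$, combined with parts (2) and (3) of the operator lemma, lets me iteratively collapse $\big(F_{i;t}(\hat Y)M_{0}(\gg_{i;t})\big)^{[\epsilon b_{ik}^{t}]_{+}}$ into $G_{i}^{(\epsilon)}(\hat Y)\,M_{0}([\epsilon b_{ik}^{t}]_{+}\gg_{i;t})$, and then to transport each successive $M_{0}(\gg_{i;t})$ past the remaining $F_{\cdot;t}(\hat Y)$-blocks. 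What emerges on the $\hat Y$-side is precisely $\hat F_{k;t}^{\{-1\}}\prod^{\rightarrow}_{i}\hat F_{i;t}^{\{[\epsilon b_{ik}^{t}]_{+}\}}$, while on the $M_{0}$-side a single vector
\[
-\gg_{k;t} + \sum_{i=1}^{n}[\epsilon b_{ik}^{t}]_{+}\gg_{i;t} + \sum_{\ell=1}^{n}[\epsilon b_{n+\ell,k}^{t}]_{+}\ee_{n+\ell}
\]
appears. By Proposition \ref{g-vec-rec-b} applied with sign $\epsilon$, this vector equals $\gg_{k;t'} + \sum_{j=1}^{n}[\epsilon b_{n+j,k}^{\bullet,t}]_{+}\tilde\bb^{j}$; splitting the corresponding $M_{0}$ as $M_{0}(\sum_{j}[\epsilon b_{n+j,k}^{t}]_{+}\tilde\bb^{j})\cdot M_{0}(\gg_{k;t'})$ and using $M_{0}(\tilde\bb^{j}) = \hat Y_{j}$ produces precisely the monomial $Z^{\sum_{\ell}[\epsilon b_{n+\ell,k}^{t}]_{+}\ee_{\ell}}$ (evaluated at $\hat Y$) at the right end of the $\hat Y$-block, exactly as in (\ref{quantum-fpoly-rec}).

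The main obstacle is verifying that the total accumulated $q$-exponent equals precisely $\rho^{\epsilon}-\lambda^{\epsilon}$. The exponent assembles from four sources: the initial $\rho_{1}^{\epsilon}$ from the multiplication in the toric frame $M_{t}$, the $L$-shifts which do not contribute further scalars directly, the rearrangement of the $M_{0}(\gg_{i;t})$'s into $\prod M_{0}([\epsilon b_{ik}^{t}]_{+}\gg_{i;t})$, and the final splitting that produces $M_{0}(\gg_{k;t'})$. To match the target, I substitute $\Lambda_{t}(\ee_{i},\ee_{j}) = \Lambda_{0}(\gg_{i;t},\gg_{j;t}) + \rho_{ij}^{t}$ (Lemma \ref{lemma:rho-props}) everywhere $\Lambda_{t}$ appears; the $\Lambda_{0}$-pieces cancel systematically against the $\Lambda_{0}$-exponents generated by reordering the $M_{0}(\gg_{\cdot;t})$'s, leaving the $\rho^{\epsilon}$ combination in the statement. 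The surviving $\lambda^{\epsilon}$ arises from commuting the coefficient-variable block $M_{0}(\sum_{j}[\epsilon b_{n+j,k}^{t}]_{+}\tilde\bb^{j})$ past $M_{0}(\gg_{k;t'})$, which by (\ref{quasicomm1}) contributes exactly $\tfrac{1}{2}\sum_{i}[\epsilon b_{n+i,k}^{t}]_{+}g_{i}'d_{i}$. Summing the $\epsilon=\pm$ contributions and invoking the uniqueness clause of Theorem \ref{thm:quantum-fpoly}(1) then yields (\ref{quantum-fpoly-rec}).
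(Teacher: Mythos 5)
Your proposal is correct and follows essentially the same route as the paper's proof: decompose $X_{k;t'}$ via the quantum exchange relation into the two $\epsilon$-terms, substitute the inductive expressions $X_{i;t} = F_{i;t}(\hat Y)M_{0}(\gg_{i;t})$, push the $F$-polynomial factors left using the $L[\aa]$ operator, apply the $\gg$-vector recurrence of Proposition \ref{g-vec-rec-b} to identify the residual $M_{0}$-vector as $\gg_{k;t'}$ plus a combination of the $\tilde\bb^{j}$, and conclude by algebraic independence of the $\hat Y_{j}$. The only cosmetic difference is that you unwind the definition of $\rho^{\epsilon}$ in terms of the $\rho^{t}_{ij}$ bookkeeping inside the proof, whereas the paper packages that computation into the definition of $\rho^{\epsilon}$ preceding the theorem.
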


\begin{remark} 
\begin{enumerate}
\item By Lemma \ref{lemma:rho-zero}, if the $F$-polynomial $F_{j; t}^{B^{0}; t_{0}}$ has nonzero constant term for every $j \in [1, n]$, $t \in \mathbb{T}_{n}$, then $\rho^{\epsilon} = 0$.
\item Setting $q = 1$ and $Z_{i} = u_{i}$ in $\hat{F}_{j; t}^{\{ r \}}$ yields $(F_{j; t}^{B^{0}; t_{0}})^{r}$; under this specialization, the quantum $F$-polynomial recurrence above becomes the $F$-polynomial recurrence given in Proposition \ref{fpoly-rec}.
\end{enumerate}
\end{remark} 

\begin{proof} (\ref{fpoly-rec-init}) and (\ref{fpoly-rec1}) are already known from the proof of Theorem \ref{thm:quantum-fpoly}.  Let $t \frac{k}{\hspace{1cm}} t'$ in $\mathbb{T}_{n}$.    The cluster variable $X_{k; t'} \in \prinA$ can be expressed as 
\begin{eqnarray}
X_{k; t'}  =  \displaystyle \sum_{\epsilon \in \{ + , - \}} M_{t}(-\textbf{e}_{k} + \sum_{i = 1}^{2n} [\epsilon b_{ik}]_{+} \textbf{e}_{i}). 
\end{eqnarray}
For $\epsilon \in \{ + , - \}$, let $\rho_{1}^{\epsilon} \in \frac{1}{2}\mathbb{Z}$ such that 
\begin{eqnarray}
& M_{t}(-\textbf{e}_{k} + \displaystyle \sum_{i = 1}^{2n} [\epsilon b_{ik}]_{+} \textbf{e}_{i}) \hspace{8cm} &  \\
\nonumber & \hspace{2cm} =  q^{\rho_{1}^{\epsilon}}M_{t}(-\textbf{e}_{k}) \displaystyle \left( \prod^{\rightarrow}_{i \in [1, n]} M_{t}([\epsilon b_{ik}]_{+} \textbf{e}_{i}) \right) M_{0}(\sum_{i = 1}^{n} [\epsilon b_{n + i, k}]_{+}\textbf{e}_{n + i}). &
\end{eqnarray}

\

Now $X_{k; t'}$ may be rewritten as 
\begin{eqnarray}
\displaystyle 
\sum_{\epsilon \in \{ + , - \}} q^{\rho_{1}^{\epsilon}}M_{t}(-\textbf{e}_{k})\left( \prod_{i \in [1, n]}^{\rightarrow} M_{t}([\epsilon b_{ik}]_{+} \textbf{e}_{i}) \right) M_{0}(\sum_{i = 1}^{n} [\epsilon b_{n + i, k}]_{+}\textbf{e}_{n + i}).
\end{eqnarray}

\

Using (\ref{fpoly-eqn-g}), $X_{k; t'}$ can be further rewritten as 
\begin{eqnarray*}
\displaystyle \sum_{\epsilon \in \{ + , - \}}  q^{\rho_{1}^{\epsilon}} (F_{k; t}(\hat{Y})M_{0}(\gg_{k; t}))^{-1} \left( \prod_{i \in [1, n]}^{\rightarrow} (F_{i;t}(\hat{Y})M_{0}(\gg_{i; t}))^{[\epsilon b_{ik}]_{+}}\right) M_{0}(\sum_{i = 1}^{n} [\epsilon b_{n + i, k}]_{+}\textbf{e}_{n + i}).  
\end{eqnarray*}
Let 
\begin{eqnarray}
\gg_{k; t}^{(\epsilon)} = -\gg_{k; t} + \sum_{i = 1}^{n} [\epsilon b_{ik}]_{+} \gg_{i; t} + \sum_{i = 1}^{n} [\epsilon b_{n + i, k}]_{+}\textbf{e}_{i + n}.
\end{eqnarray}

Pushing all the quantum $F$-polynomials to the left in the above expression for $X_{k; t'}$, the cluster variable may further be rewritten as 
\begin{eqnarray}
X_{k; t'} = \displaystyle  \sum_{\epsilon \in \{ + , - \}} q^{\rho^{\epsilon}} \hat{F}_{k; t}^{\{ -1 \}}(\hat{Y}) \left(\prod^{\rightarrow}_{i \in [1, n]} \hat{F}_{i; t}^{\{ [\epsilon b_{ik}]_{+} \}}(\hat{Y})\right)M_{0}(\gg_{k; t}^{(\epsilon)}).
\end{eqnarray}
From the recurrence relation for $\gg$-vectors (Proposition \ref{g-vec-rec-b}), it follows that 
\begin{eqnarray}
-\gg_{k; t'} + \gg_{k; t}^{(\epsilon)} = \sum_{i = 1}^{n} [\epsilon b_{n + i, k}]_{+} \tilde{\textbf{b}}^{i}    \hspace{1cm} (\epsilon \in \{ + , - \})
\end{eqnarray}
where $\tilde{\textbf{b}}^{i}$ is the $i$th column of $\tilde{B}^{0}$, the principal matrix with respect to $B^{0}$.  For $\epsilon \in \{ +, - \}$, observe that $\lambda^{\epsilon} \in \frac{1}{2}\mathbb{Z}$ satisfies
\begin{eqnarray}
\lambda^{\epsilon} = \frac{1}{2} \Lambda_{0}(-\gg_{k; t'}, \sum_{i = 1}^{n} [\epsilon b_{n + i, k}]_{+} \tilde{b}^{i} + \gg_{k; t'}), 
\end{eqnarray}
since $\Lambda_{0}(\ee_{j}, \tilde{\bb}^{i}) = -\delta_{ij} d_{i}$ for $i \in [1, 2n]$, $j \in [1, n]$.
This means that
\begin{eqnarray}
M_{0}(-\gg_{k; t'} + \gg_{k; t}^{(\epsilon)}) = q^{\lambda^{\epsilon}}M_{0}(\gg_{k; t}^{(\epsilon)})M_{0}(-\gg_{k; t'}).
\end{eqnarray}
Consequently, 
\begin{eqnarray}
& & F_{k; t'}(\hat{Y}) =  X_{k; t'}M_{0}(-\gg_{k; t'}) \\ 
\nonumber & & \hspace{1.4cm} =   \displaystyle  \sum_{\epsilon \in \{ + , - \}} q^{(\rho^{\epsilon} - \lambda^{\epsilon})} \hat{F}_{k; t}^{\{ -1 \}}(\hat{Y}) \left( \prod_{i \in [1, n]}^{\rightarrow} \hat{F}_{i; t}^{\{[\epsilon b_{ik}]_{+} \}}(\hat{Y}) \right) M_{0}(\sum_{i = 1}^{n} [\epsilon b_{n + i, k}]_{+} \tilde{\bb}^{i}).
\end{eqnarray}
The theorem follows the fact that $\hat{Y}_{1}, \ldots, \hat{Y}_{n}$ are algebraically independent.
\end{proof}

\begin{example}[\emph{Type~$A_2$; cf. \cite[Section~6]{ca1}}]  For $n = 2$, the tree $\TT_{2}$ is an infinite chain.  We call the vertices $\ldots, t_{-1}, t_{0}, t_{1}, t_{2}, \ldots$, and label the edges as follows:
\begin{eqnarray}
\label{eq:TT2}
\cdots
\overunder{2}{} t_{-1}
\overunder{1}{} t_0
\overunder{2}{} t_1
\overunder{1}{} t_2
\overunder{2}{} t_3
\overunder{1}{} \cdots \,.
\end{eqnarray}
Let $B^{0} = \left(  \begin{array}{cc} 0 & 1 \\ -1 & 0 \end{array} \right)$ be the initial $n \times n$ exchange matrix, and $D = \left(  \begin{array}{cc} 2 & 0 \\ 0 & 2 \end{array} \right)$.  Let $\tilde{B}^{0}$ be the principal matrix corresponding to $B^{0}$.  Then we may use matrix mutation (equation (\ref{eq:matrix-mutation})) and the $\gg$-vector recurrences (Proposition \ref{g-vec-rec-b}) to compute $\tilde{B}^{t}$ and the $\gg$-vectors $\gg_{j; t} = \gg_{j; t}^{B^{0}; t_{0}}$ which are given in Table \ref{table:A2-quantum-fpolys}.  Also, the quantum $F$-polynomials $F_{i; t_{0}}$ and $F_{i; t_{1}}$ ($i = 1, 2$) may be computed using Example \ref{example:simple-quantum-fpolys}.

\begin{table}[ht]
\begin{equation*}
\begin{array}{|c|c|cc|cc|}
\hline
&&&&&\\[-4mm]
t & \tilde B^{t} & \hspace{5mm} \gg_{1;t} & \gg_{2;t}& F_{1;t} & F_{2;t} \\[1mm]
\hline
&&&&&\\[-3mm]
0 &
\left[\begin{smallmatrix}0&1\\-1&0\\1&0\\0&1\end{smallmatrix}\right] &
\ee_1&\ee_2 &
1 & 1
\\[4.5mm]
\hline
&&&&&\\[-3mm]
1 & \left[\begin{smallmatrix}0&-1\\1&0\\1&0\\0&-1\end{smallmatrix}\right] &
\ee_1 & -\ee_{2} &
1& qZ_2+1
\\[4.5mm]
\hline
&&&&&\\[-3mm]
2 & \left[\begin{smallmatrix}0&1\\-1&0\\-1&0\\0&-1\end{smallmatrix}\right] &
-\ee_{1} &
-\ee_{2} &
qZ^{\ee_{1} + \ee_{2}} + qZ_1+1 &
qZ_2+1
\\[4.5mm]
\hline
&&&&&\\[-3mm]
3 & \left[\begin{smallmatrix}0&-1\\1&0\\-1&0\\-1&1\end{smallmatrix}\right] &
-\ee_{1}&
\ee_{2} - \ee_{1}&
qZ^{\ee_{1} + \ee_{2}}  + qZ_{1} + 1 &
qZ_1+1
\\[4.5mm]
\hline
&&&&&\\[-3mm]
4 & \left[\begin{smallmatrix}0&1\\-1&0\\1&-1\\1&0\end{smallmatrix}\right] &
\ee_{2} &
\hspace{2mm} \ee_{2} - \ee_{1}&
1 &
qZ_1+1
\\[4.5mm]
\hline
&&&&&\\[-3mm]
5 &
\left[\begin{smallmatrix}0&-1\\1&0\\0&1\\1&0\end{smallmatrix}\right] &
 \ee_2& \ee_1 & 1 & 1
\\[4.5mm]
\hline
\end{array}
\end{equation*}
\smallskip

\caption{Type~$A_2$, quantum $F$-polynomials}
\label{table:A2-quantum-fpolys}
\end{table}

For the remaining quantum $F$-polynomials, we use recurrence relations given in Theorem \ref{thm:quantum-fpoly-rec}.  To compute $F_{1; t_{2}}$, let $t = t_{1}$, $t' = t_{2}$, $k = 1$.  In this case,   $\hat{F}^{\{-1\}}_{1; t_{1}} = 1$,  $\hat{F}_{2; t_{1}}^{\{1\}} = qZ_{2} + 1$, and the recurrence (\ref{quantum-fpoly-rec}) becomes
\begin{eqnarray*}
F_{1; t_{2}} = q\hat{F}_{2; t_{1}}^{\{1\}} Z_{1} + 1 = q(qZ_{2} + 1)Z_{1} + 1 = qZ^{\ee_{1} + \ee_{2}} + qZ_{1} + 1. 
\end{eqnarray*}
To compute $F_{2; t_{3}}$, let $t = t_{2}, t' = t_{3}, k = 2$.  Then $\hat{F}_{2; t_{2}}^{\{-1\}} = (q^{-1}Z_{2} + 1)^{-1}$, $\hat{F}_{1; t_{2}}^{\{1\}}  = q^{-1}Z^{\ee_{1} + \ee_{2}} + qZ_{1} + 1$.  The recurrence (\ref{quantum-fpoly-rec}) in this case yields
\begin{eqnarray*}
F_{2; t_{3}} =  \hat{F}_{2; t_{2}}^{\{-1\}}( \hat{F}_{1; t_{2}}^{\{1\}} + q^{-1}Z_{2})  =  qZ_{1} + 1.
\end{eqnarray*}
For $F_{1; t_{4}}$, let $t = t_{3}$, $t' = t_{4}$, $k = 1$.   Then $\hat{F}_{1; t_{3}}^{\{-1\}} = (q^{-1}Z^{\ee_{1} + \ee_{2}} + q^{-1}Z_{1} + 1)^{-1}$, $\hat{F}_{2; t_{3}}^{\{1\}} = q^{-1}Z_{1} + 1$.  The recurrence in this case is 
\begin{eqnarray}
F_{1; t_{4}} = \hat{F}_{1; t_{3}}^{\{-1\}}(\hat{F}_{2; t_{3}}^{\{1\}} + q^{-1}Z^{\ee_{1} + \ee_{2}}) = 1.
\end{eqnarray}
Finally, for $F_{2; t_{5}}$, let $t = t_{4}$, $t' = t_{5}$, $k = 2$.   Then $\hat{F}_{2; t_{4}}^{\{-1\}} = (q^{-1}Z_{1} + 1)^{-1}$, $\hat{F}_{1; t_{4}}^{\{1\}} = 1$, and the recurrence gives
\begin{eqnarray}
F_{2; t_{5}} = \hat{F}_{2; t_{4}}^{\{-1\}}(\hat{F}_{1; t_{4}}^{\{1\}}  + q^{-1}Z_{1}) = 1. 
\end{eqnarray}

\end{example}

\section{Examples of Quantum $F$-polynomials} \label{section:examples}

For any $n \times n$ integer skew-symmetric matrix $B = (b_{ij})$, we may define a quiver $Q(B)$ on the set of vertices $[1, n]$, with $|b_{ij}|$ arrows from $i$ to $j$ if and only if $b_{ij} < 0$.    Let $B^{0}$ be any $n \times n$ skew-symmetric integer matrix, and let $D = dI_{n}$, where $I_{n}$ is the $n \times n$ identity matrix and $d$ is a positive integer.     Let $Q^{0} = Q(B^{0})$, and fix $T \subset [1, n]$ such that the subgraph of $Q^{0}$ induced by $T$ is a tree.  In particular,  there is at most one edge in $Q^{0}$ connecting any given pair of  vertices in $T$.   Without loss of generality, assume that $T = [1, \ell] \subset [1, n]$.  Furthermore, we may assume that the vertices of $T$ are labeled so that for each $i \in [1, \ell]$, the subgraph of $T$ induced by the set $[1, i]$ is also a tree, and in that tree, the vertex $i$ is a leaf.  

In this section, we associate to $T$ a cluster variable in $\prinA(B^{0}, t_{0})$ and compute the corresponding quantum $F$-polynomial and $\gg$-vector.  The identification of $T$ to such a cluster variable is done via \emph{denominator vectors}.   We recall from \cite{ca1} the definition of these vectors and the recurrence relations from which they may be computed.  Let $\mathcal{A}$ be any cluster algebra whose initial exchange matrix has principal part $B^{0}$.  Suppose $\mathcal{A}$ has initial extended cluster $(x_{1}, \ldots, x_{m})$.  By the Laurent phenomenon, any cluster variable $x_{j; t} \in \mathcal{A}$ may be expressed as 
\begin{eqnarray}
x_{j; t} = \displaystyle \frac{N(x_{1}, \ldots, x_{n})}{x_{1}^{d_{1}} \ldots x_{n}^{d_{n}}  },
\end{eqnarray}
where $N(x_{1}, \ldots, x_{n})$ is a polynomial with coefficients in $\ZZ[x^{\pm 1}_{n + 1}, \ldots, x^{\pm 1}_{m}]$ which is not divisible by any $x_{i}$.  Let $\dd_{j; t}^{B^{0}; t_{0}} = \left[   \begin{array}{c} d_{1} \\ \vdots \\ d_{n}   \end{array}  \right]$, and call this the \emph{denominator vector} of the cluster variable $x_{j; t}$.    Similarly, denominator vectors may be defined in quantum cluster algebras.

The vectors $\dd_{j;t} = \dd_{j;t}^{B^0;t_0}$ are uniquely
determined by the initial conditions
\begin{equation}
\label{eq:denom-cluster-variable-initial}
\dd_{j;t_0}^{B^0;t_0} = -\ee_j
\end{equation}
(where $\ee_1, \dots, \ee_n$ are the standard basis vectors
in~$\ZZ^n$) together with the recurrence relations implied by
the exchange relation~\eqref{eq:exchange-relation}: for $t \overunder{k}{} t'$ in $\mathbb{T}_{n}$, we have 
\begin{equation}
\label{eq:exchange-denominator}
\dd_{j;t'} =
\begin{cases}
\dd_{j;t} & \text{if $j\neq k$;}\\
- \dd_{k;t} +
\max\Bigl(\displaystyle\sum_{i = 1}^{n} [b_{ik}^t]_+ \dd_{i;t},
\sum_{i = 1}^{n} [-b_{ik}^t]_+ \dd_{i;t}\Bigr)
& \text{if $j=k$}
\end{cases}
\end{equation}
Here, the $\max$ operation on vectors is performed component-wise.  Observe that the denominator vector depends on $B^{0}, t_{0}, t, j$, not on the choice of coefficients or quantization.

For $S \subset [1, n]$, write 
\begin{eqnarray}
\ee_{S} = \sum_{i \in S} \ee_{i} \in \ZZ^{n}.
\end{eqnarray}
The next two propositions are an immediate consequence of Proposition 5.7 and Corollary 5.8 of \cite{qps2}: 

\begin{proposition} \label{prop:cluster-vars-trees} There exists a cluster variable $x_{T}$ in $\prinA(B^{0}, t_{0})$ such that the denominator vector of $x_{T}$  is $\ee_{T}$.   Furthermore, this cluster variable may be obtained from the initial cluster by mutating in directions $k = 1, \ldots, \ell$ and taking the $\ell$th cluster variable in the resulting cluster.
\end{proposition}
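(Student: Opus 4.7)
The plan is to proceed by induction on $i \in [1, \ell]$ along the mutation path $t_0 \overunder{1}{} t_1 \overunder{2}{} \cdots \overunder{\ell}{} t_\ell$, tracking the denominator vectors via the recurrence \eqref{eq:exchange-denominator}. At each stage~$i$ the aim is to show $\dd_{i;t_i} = \ee_{[1,i]}$, while also carrying along enough explicit information about the remaining denominator vectors in the cluster at $t_i$ to feed into the next step. The base case $i=1$ is immediate from \eqref{eq:exchange-denominator}: starting from $\dd_{j;t_0} = -\ee_j$, both arguments of the component-wise maximum are non-positive with disjoint positive supports, so the maximum vanishes and $\dd_{1;t_1} = \ee_1$.

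For the inductive step, only directions $1, \ldots, i-1$ have been mutated, so $\dd_{i;t_{i-1}} = -\ee_i$, and the recurrence gives
\begin{equation*}
\dd_{i;t_i} \,=\, \ee_i \,+\, \max\!\Bigl( \sum_{j=1}^{n} [b^{t_{i-1}}_{ji}]_+ \dd_{j;t_{i-1}}, \ \sum_{j=1}^{n} [-b^{t_{i-1}}_{ji}]_+ \dd_{j;t_{i-1}} \Bigr).
\end{equation*}
What remains is to show that this maximum equals $\ee_{[1,i-1]}$. The tree hypothesis enters as follows: since vertex $i$ is a leaf of the subgraph of $Q^0$ on $[1,i]$, it has a unique neighbor $p(i) \in [1, i-1]$ in the original quiver, and this sparsity constrains which entries of column $i$ of $\tilde{B}^{t_{i-1}}$ can be nonzero after the sequence $\mu_1, \ldots, \mu_{i-1}$. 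A sign-coherence argument for this column should then single out one branch of the maximum, whose terms reassemble $\ee_{[1, i-1]}$ once the inductive description of the vectors $\dd_{j;t_{i-1}}$ is substituted.

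The main obstacle is precisely this sign-coherence and bookkeeping analysis: intermediate mutations can both create new nonzero entries in column $i$ and cancel existing ones, in ways that depend sensitively on the orientations of the edges of $T$, and ruling out spurious cancellations is the heart of the matter. A cleaner route, and the one used here, is to invoke the representation-theoretic results of Proposition~5.7 and Corollary~5.8 of \cite{qps2}: there $x_T$ is realized as the cluster character of a specific indecomposable rigid object in the cluster category of $Q^0$, so that its denominator vector $\ee_T$ is built into the construction and the sequence $\mu_1, \ldots, \mu_\ell$ corresponds to a chain of irreducible exchanges adapted to the tree $T$.
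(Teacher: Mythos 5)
Your final route is exactly the paper's: the paper gives no independent argument and simply states that this proposition is an immediate consequence of Proposition~5.7 and Corollary~5.8 of \cite{qps2}, which is the citation you settle on after abandoning the direct induction on denominator vectors. The exploratory first half of your write-up is therefore harmless but unnecessary, and your description of \cite{qps2} as a cluster-category construction is slightly off (it is phrased in terms of quiver representations with potentials), though the substance of the appeal is correct.
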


Let $t_{1}, \ldots, t_{\ell}$ in $\mathbb{T}_{n}$ such that $t_{i - 1} \frac{i}{\hspace{1cm}} t_{i}$ in $\mathbb{T}_{n}$ for $i = 1, \ldots, \ell$.  Then the proposition implies that the cluster variable $x_{i; t_{i}} \in \prinA(B^{0}, t_{0})$ is equal to $x_{[1, i]}$ for each $i$.   Write $F_{T} = F_{\ell; t_{\ell}}^{B^{0}; D; t_{0}}$, $F^{cl}_{T} = F^{B^{0}; t_{0}}_{\ell; t_{\ell}}$, and $\gg_{T} = \gg_{\ell; t_{\ell}}^{B^{0}; t_{0}}$ for the quantum $F$-polynomial, nonquantum (or "classical") $F$-polynomial, and $\gg$-vector corresponding to $T$, respectively.   

\begin{proposition} \label{fpoly-trees} \cite[Corollary 5.8]{qps2} The $F$-polynomial corresponding to $T$ is given by 
\begin{eqnarray}
F^{cl}_{T}(u_{1}, \ldots, u_{n}) = \sum_{S} \prod_{i \in S} u_{i}
\end{eqnarray}
where the summation ranges over all subsets $S \subset T$ such that the following condition holds:
\begin{eqnarray} \label{subsetprop0}
\mbox{if }j \in S \mbox{ and } i \in T \mbox{ such that }j \rightarrow i \mbox{ in }Q^{0}, \mbox{ then } i \in S.
\end{eqnarray}
\end{proposition}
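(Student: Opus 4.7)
My plan is to prove this by induction on $\ell = |T|$, using the $F$-polynomial recurrence (Proposition~\ref{fpoly-rec}). The base case $\ell=1$ is a single application of the recurrence: since $F_{j;t_0}=1$ and the principal-coefficient condition gives $b^{0}_{n+j,1}=\delta_{j,1}$, relation (\ref{fpoly-rec2}) immediately yields $F_{1;t_1}=1+u_1$, matching the sum over the two closed subsets $\emptyset,\{1\}\subset\{1\}$.

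For the inductive step, I would apply (\ref{fpoly-rec2}) at the final edge $t_{\ell-1}\overunder{\ell}{}t_\ell$. The key simplification is that $F_{\ell;t_{\ell-1}}=1$, since vertex $\ell$ has not yet been mutated; this removes the denominator from the recurrence. In parallel I would prove (by the same induction) the auxiliary statement that for each $j<\ell$ the $F$-polynomial $F_{j;t_{\ell-1}}$ equals $F^{cl}_{[1,j]}$, the $F$-polynomial for the subtree on $[1,j]$; this follows because the mutations $\mu_{j+1},\dots,\mu_{\ell-1}$ do not touch position $j$, so $x_{j;t_{\ell-1}}=x_{j;t_j}$ and hence their $F$-polynomials agree. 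The remaining input is the $\ell$th column of $\tilde{B}^{t_{\ell-1}}$. Since $\ell$ is a leaf of $T$ attached to a unique neighbor $p\in T$, and $[1,\ell-1]$ is constructed in leaf-attachment order, a careful tracking through the $\ell-1$ matrix mutations in (\ref{eq:matrix-mutation}) should pin down which entries $b^{t_{\ell-1}}_{j,\ell}$ (for $j\in[1,\ell-1]$) and $b^{\bullet,t_{\ell-1}}_{n+r,\ell}$ are nonzero: in the top block essentially only row $p$ contributes (with sign determined by the direction of the arrow between $p$ and $\ell$ in $Q^0$), while the principal-coefficient block records the preceding mutation sequence. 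Plugging these into the two-term recurrence and comparing with the combinatorial formula would reduce the claim to a combinatorial identity: closed subsets $S\subset T$ split according to whether $\ell\in S$ and/or $p\in S$, with the arrow direction dictating how closedness propagates across that edge.

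The main obstacle is exactly this last bookkeeping: controlling the $\ell$th column of $\tilde{B}^{t_{\ell-1}}$ through $\ell-1$ successive mutations is delicate, since each intermediate step $\mu_i$ can \emph{a priori} alter that column. A cleaner and more robust alternative---presumably the route taken in the cited reference \cite{qps2}---is to identify $x_T$ with an indecomposable rigid representation $M_T$ of the path algebra of $Q^0|_T$ and invoke the Caldero--Chapoton-type formula expressing $F^{cl}_T$ as the generating function $\sum_{N\subset M_T}\chi(\mathrm{Gr}_{\underline{\dim}N}(M_T))\,u^{\underline{\dim}N}$. For a tree quiver every nonempty quiver Grassmannian of $M_T$ is a single point, and the dimension vectors of submodules of $M_T$ are precisely the indicator vectors of the closed subsets appearing in (\ref{subsetprop0}), yielding the formula directly.
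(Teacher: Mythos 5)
The paper does not actually prove this proposition: it is imported verbatim, together with Proposition \ref{prop:cluster-vars-trees}, as "an immediate consequence of Proposition 5.7 and Corollary 5.8 of \cite{qps2}." So the only meaningful comparison is with that source. Your second, fallback route is essentially the argument of \cite{qps2} and is sound in outline: $x_T$ corresponds to the thin representation supported on $T$, every nonempty quiver Grassmannian of a thin representation of a tree is a single point, and the dimension vectors of subrepresentations are exactly the indicator vectors $\ee_S$ of the subsets $S$ satisfying \eqref{subsetprop0}. One caveat: $Q^0$ need not be acyclic (type $\mbox{A}_n$ quivers contain oriented $3$-cycles), so "indecomposable rigid representation of the path algebra of $Q^0|_T$" is not quite the right category --- one must work with representations of the quiver with potential on all of $Q^0$ as in \cite{qps2}; since $M_T$ is thin and supported on the induced tree, the potential acts trivially and your submodule analysis goes through unchanged. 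Note also that the identification of $x_T$ with this representation (equivalently, the claim that the cluster variable reached by mutating in directions $1,\dots,\ell$ has denominator vector $\ee_T$) is itself nontrivial and is exactly what Proposition \ref{prop:cluster-vars-trees} imports from \cite[Proposition~5.7]{qps2}.

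Your primary plan, induction via the recurrence \eqref{fpoly-rec2}, has a genuine unfilled gap precisely where you flag it. Determining the column $\tilde\bb^{\ell;\,t_{\ell-1}}$ after $\ell-1$ mutations is not routine bookkeeping: the intermediate mutations $\mu_j$ with $j$ adjacent to $\ell$ in $Q^0$ (including vertices of $T$ and, potentially, vertices outside $T$ lying on $3$-cycles through $\ell$) really do alter both the sign pattern and the support of that column, and the coefficient rows $b^{\bullet,t_{\ell-1}}_{n+r,\ell}$ are $c$-vector data whose computation along an arbitrary mutation sequence is of essentially the same difficulty as the statement being proved. Without an explicit closed form for this column (or an auxiliary induction controlling it), the "combinatorial identity" you would reduce to cannot be stated, let alone verified. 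The auxiliary claim $F_{j;t_{\ell-1}}=F^{cl}_{[1,j]}$ is fine, granted Proposition \ref{prop:cluster-vars-trees}. As written, then, your first route is a plan with a hole, and your second route is the correct proof --- which is the one the paper outsources to \cite{qps2}.
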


\

For $k \in [1, n]$, define
\begin{eqnarray} \label{def:I-out-in}
I_{\mbox{\tiny{in}}}(k) = \{ i \in T : i \rightarrow k  \mbox{ in } Q^{0}\}, &  I_{\mbox{\tiny{out}}}(k) = \{ j \in T : k \rightarrow j \mbox{ in } Q^{0} \}.
\end{eqnarray}

\begin{proposition} \cite[Remark 5.9]{qps2}  \label{thm:gvec-trees} For $k \in T$, the $k$th component of $\gg_{T}$ is equal to 
\begin{eqnarray}
g_{k} = | I_{\rm out}(k)  |- 1.
\end{eqnarray}
\end{proposition}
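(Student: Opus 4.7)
The plan is to compute $g_k$ by combining two expressions for the cluster variable $x_T \in \prinA(B^0, t_0)$: the formula of Theorem~\ref{thm:cluster-var-formula} involving the $F$-polynomial and $\gg$-vector, and the explicit denominator vector $\ee_T$ coming from Proposition~\ref{prop:cluster-vars-trees}.

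First, I would apply Theorem~\ref{thm:cluster-var-formula} to write
\begin{eqnarray*}
x_T = F_T^{cl}(\hat{y}_1, \ldots, \hat{y}_n) \, x_1^{g_1} \cdots x_n^{g_n},
\end{eqnarray*}
where $\hat{y}_j = y_j \prod_{i=1}^n x_i^{b^0_{ij}}$, and then expand using the subset formula for $F_T^{cl}$ in Proposition~\ref{fpoly-trees}:
\begin{eqnarray*}
x_T = \sum_{S} \Bigl(\prod_{j \in S} y_j\Bigr) \cdot \prod_{k=1}^n x_k^{g_k + \sum_{j \in S} b^0_{kj}},
\end{eqnarray*}
where $S$ ranges over subsets of $T$ satisfying the closure condition~(\ref{subsetprop0}). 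Distinct $S$ produce distinct $y$-monomials, so no cancellation occurs among these summands. Proposition~\ref{prop:cluster-vars-trees} tells us that the denominator vector of $x_T$ equals $\ee_T$, and this translates, for each $k \in T$, into the equation
\begin{eqnarray*}
g_k = -1 - \min_{S} \sum_{j \in S} b^0_{kj},
\end{eqnarray*}
where the minimum runs over closed $S \subseteq T$.

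The heart of the argument is to evaluate this minimum. Since $T$ induces a tree in $Q^0$, any two vertices of $T$ are connected by at most one edge, so $b^0_{kj} \in \{-1, 0, +1\}$ for $j, k \in T$, with $b^0_{kj} = -1$ precisely when $j \in I_{\rm out}(k)$ and $b^0_{kj} = +1$ precisely when $j \in I_{\rm in}(k)$. Hence $\sum_{j \in S} b^0_{kj} = |S \cap I_{\rm in}(k)| - |S \cap I_{\rm out}(k)|$. The natural candidate for the minimizer is the smallest closed subset $S^{\star}$ containing $I_{\rm out}(k)$, namely the set of vertices of $T$ reachable from $I_{\rm out}(k)$ by directed paths within $T$.

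The main obstacle is to verify that $S^{\star}$ avoids $I_{\rm in}(k) \cup \{k\}$. I would deduce this from the tree structure: any directed path in $T$ starting at some $j \in I_{\rm out}(k)$ and ending in $I_{\rm in}(k) \cup \{k\}$ would, when viewed as an undirected walk in the tree $T$, be forced to traverse the edge $\{k, j\}$; but this edge is oriented $k \to j$, so it cannot lie on a directed path leaving $j$. Thus $|S^{\star} \cap I_{\rm in}(k)| = 0$ and $|S^{\star} \cap I_{\rm out}(k)| = |I_{\rm out}(k)|$, giving $\sum_{j \in S^{\star}} b^0_{kj} = -|I_{\rm out}(k)|$. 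This is clearly optimal, since $|S \cap I_{\rm in}(k)| \ge 0$ and $|S \cap I_{\rm out}(k)| \le |I_{\rm out}(k)|$ for every $S$. Substituting into the expression for $g_k$ yields $g_k = |I_{\rm out}(k)| - 1$, as claimed.
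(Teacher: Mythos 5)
Your argument is correct. Note, however, that the paper does not prove this statement at all: it is quoted verbatim from \cite[Remark~5.9]{qps2}, where it is obtained from the representation-theoretic description of $\gg$-vectors (essentially the formula $g_k=\dim\ker(\gamma_k)-\dim M(k)$ that the paper records later as Proposition~\ref{thm:gvec-trees2}, specialized to $k\in T$, where the tree structure forces $\rank(\gamma_k)=0$). What you do instead is derive the formula internally from the two other imported facts, Proposition~\ref{prop:cluster-vars-trees} (denominator vector $\ee_T$) and Proposition~\ref{fpoly-trees} (the subset expansion of $F_T^{cl}$), by feeding them through the separation formula \eqref{eq:cluster-var-fpoly}. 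The key points all check out: since each summand carries the distinct coefficient monomial $\prod_{j\in S}y_j$ with coefficient $1$, no cancellation can raise the minimal $x_k$-exponent, so $-1=g_k+\min_S\sum_{j\in S}b^0_{kj}$ is exact; the identification $b^0_{kj}=-1\Leftrightarrow j\in I_{\rm out}(k)$ and $b^0_{kj}=+1\Leftrightarrow j\in I_{\rm in}(k)$ is right for the paper's quiver convention; and the closure $S^{\star}$ of $I_{\rm out}(k)$ under outgoing arrows indeed misses $I_{\rm in}(k)\cup\{k\}$, because a directed path in the induced tree from $j\in I_{\rm out}(k)$ to $k$ or to a neighbor of $k$ would have to begin by traversing the edge $\{k,j\}$ against its orientation. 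Your route is more elementary and has the virtue of staying entirely within the cluster-algebra formalism already set up in the paper (no quivers with potentials needed), at the cost of relying on the quoted denominator-vector and $F$-polynomial formulas; the cited route computes $\gg_T$ directly from the representation $M$ and does not need the denominator vector as input.
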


A formula for the remaining components of the $\gg$-vector will be given in Proposition \ref{thm:gvec-trees2} in a particular situation.

For $S \subset [1, n]$, let $\phi(S)$ be the number of components in the subgraph of $Q^{0}$ induced by $S$.   

\begin{theorem} \label{thm:quantum-fpoly-trees} The quantum $F$-polynomial $F_{T}$ is given by 
\begin{eqnarray}
F_{T} = \sum q^{\frac{d}{2}\phi(S)}Z^{\ee_{S}}
\end{eqnarray}
where the summation ranges over subsets $S \subset T$ such that (\ref{subsetprop0}) is satisfied.
\end{theorem}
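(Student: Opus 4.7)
The plan is to combine the classical result (Proposition~\ref{fpoly-trees}) with the bar-invariance in Corollary~\ref{cor:fpoly-coeff} to reduce the theorem to the assertion that each coefficient in the quantum $F$-polynomial $F_T$ is a monomial in $q^{\pm 1/2}$ (possibly zero). By Proposition~\ref{fpoly-trees} and part~(3) of Theorem~\ref{thm:quantum-fpoly}, $F_T$ specializes at $q = 1$ to $\sum_{S} \prod_{i \in S} u_i$, the sum taken over subsets $S \subset T$ satisfying \eqref{subsetprop0}. Writing $F_T = \sum_{\aa} P_\aa(q^{1/2})\, Z^\aa$, this means $P_{\ee_S}(1) = 1$ for allowed $S$ and $P_\aa(1) = 0$ for all other $\aa$. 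If every $P_\aa$ is known to be a monomial in $q^{\pm 1/2}$, then the zero ones must vanish identically, and each nonzero $P_{\ee_S}$ has the form $q^{c_S/2}$.

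By Corollary~\ref{cor:fpoly-coeff} (with $\dd = (d,\ldots,d)$), the exponent is $c_S = -d \sum_{i \in S} g_i$. Using Proposition~\ref{thm:gvec-trees} (so $g_i = |I_{\mbox{\tiny{out}}}(i)| - 1$ for $i \in T$) together with condition \eqref{subsetprop0}, the sum $\sum_{i \in S} |I_{\mbox{\tiny{out}}}(i)|$ counts the directed edges of $Q^0$ with both endpoints in $S$, which equals the edge count $E(S)$ of the induced subgraph on $S$. Since $S \subset T$ induces a forest, $\phi(S) = |S| - E(S)$, so $\sum_{i \in S} g_i = E(S) - |S| = -\phi(S)$; hence $c_S = d\phi(S)$, exactly matching the claimed formula.

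The main obstacle is to establish monomiality of every $P_\aa$. I would proceed by induction on $\ell = |T|$ using the quantum $F$-polynomial recurrence of Theorem~\ref{thm:quantum-fpoly-rec} along the path $t_0, t_1, \ldots, t_\ell$ from Proposition~\ref{prop:cluster-vars-trees}. The base case $\ell = 1$ is Example~\ref{example:simple-quantum-fpolys}. For the inductive step, $F_{i;t_{\ell-1}} = F_{T_i}$ has monomial coefficients for $i < \ell$ by induction; $F_{\ell;t_{\ell-1}} = 1$ with $\gg_{\ell;t_{\ell-1}} = \ee_\ell$ collapses $\hat{F}^{\{-1\}}_{\ell;t_{\ell-1}}$ to $1$ in the recurrence; and each $L$-operator sends $Z$-monomials to $Z$-monomials, so every factor $\hat{F}^{\{r\}}_{i;t_{\ell-1}}$ carries monomial $q$-coefficients in front of its $Z$-monomials. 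The subtle point is that the two summands ($\epsilon = \pm$) in the recurrence must not share any $Z^\aa$ monomial, for otherwise their individually monomial coefficients could combine into a non-monomial. This requires careful analysis of the $\ell$-th column of $\tilde{B}^{t_{\ell-1}}$; although $\ell$ is a leaf of $T$ with unique neighbor $p(\ell)$, small examples show that entries of this column indexed by other vertices of $T_{\ell-1}$ can become nonzero after the mutations $\mu_1, \ldots, \mu_{\ell-1}$, so the tree combinatorics alone do not immediately give disjointness. I would argue disjointness of the two summands by pairing each summand's expansion with the allowed subsets $S$ through the known support of the classical $F$-polynomial (which has no cancellations), leveraging positivity of the $q$-deformation inherent in the recurrence to conclude that the two summands contribute disjointly to the ``$\ell \notin S$'' and ``$\ell \in S$'' parts. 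Once monomiality is established, the preceding paragraph pins down the $q$-exponent to be $d\phi(S)/2$ as claimed.
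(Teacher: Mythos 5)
Your overall strategy is the paper's: reduce to showing that every coefficient of $F_{T}$ is a single power of $q^{1/2}$, then use Corollary \ref{cor:fpoly-coeff} and Proposition \ref{thm:gvec-trees} to identify the exponent with $\frac{d}{2}\phi(S)$ (your computation $-\gg_{T}\cdot\ee_{S} = |S| - E(S) = \phi(S)$ is exactly the paper's). The one place where you diverge is the step you flag as the main obstacle, and there the paper's resolution is simpler than what you propose: you do not need the two summands of the recurrence \eqref{quantum-fpoly-rec} to have disjoint $Z$-supports. The recurrence, the inductive hypothesis, and the fact that each operator $L[\hh]$ sends a term $q^{c}Z^{\bb}$ to another such term show only that each coefficient $P_{S}(q^{1/2})$ is a \emph{subtraction-free} Laurent polynomial in $q^{1/2}$, i.e.\ a sum of powers of $q^{1/2}$ with nonnegative integer coefficients. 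Since no such term can cancel upon setting $q=1$, part (3) of Theorem \ref{thm:quantum-fpoly} together with Proposition \ref{fpoly-trees} gives $P_{S}(1)=1$ for admissible $S\subset T$ and $P_{\aa}(1)=0$ for all other exponent vectors; a subtraction-free Laurent polynomial equal to $1$ at $q=1$ is necessarily a single power of $q$, and one equal to $0$ is identically zero. Monomiality therefore follows directly from positivity plus the classical specialization; the disjointness of the two summands is a consequence of this, not a prerequisite, and no analysis of the columns of $\tilde{B}^{t_{\ell-1}}$ is needed. With that substitution your argument closes and agrees with the paper's proof.
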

\begin{proof}
First, we need to show that the quantum $F$-polynomial $F_{T}$ is given by 
\begin{eqnarray} \label{eqn:trees-quantum-fpoly}
F_{T} = \sum_{S} q^{-\frac{d}{2}(\gg_{T}\cdot \ee_{S})} Z^{\ee_{S}},
\end{eqnarray} 
where the summation ranges over $S \subset T$ satisfying (\ref{subsetprop0}).

Apply induction on $\ell$, the number of vertices in $T$.   If $T = \{ 1 \}$, then Proposition \ref{thm:gvec-trees} implies that the first component of $\gg_{T}$ is $-1$.  Since $F_{T} = F^{B^{0}; D; t_{0}}_{1; t_{1}}$, (\ref{eqn:trees-quantum-fpoly}) follows from Example \ref{example:simple-quantum-fpolys}.  Next, assume that (\ref{eqn:trees-quantum-fpoly}) is known for $F_{[1, i]}$, where $i = 1, \ldots, \ell - 1$.   It suffices to prove that 
\begin{eqnarray} \label{eqn:fpoly-trees-first}
F_{T} = \sum_{S} P_{S}(q^{\frac{1}{2}}) Z^{\ee_{S}},
\end{eqnarray}
where the summation ranges over $S \subset T$ such that (\ref{subsetprop0}) holds, and each $P_{S}(q^{\frac{1}{2}})$ is of the form $q^{\lambda}$ for some $\lambda \in \frac{1}{2}\ZZ$.  Then (\ref{eqn:trees-quantum-fpoly})  follows from Corollary \ref{cor:fpoly-coeff} and induction.

 Using $t = t_{\ell - 1}$ and $t' = t_{\ell}$, Theorem \ref{thm:quantum-fpoly-rec} implies that there exists some $\aa, \aa' \in \ZZ^{n}$ and $\lambda, \lambda' \in \frac{1}{2}\ZZ$ such that
\begin{eqnarray}  \label{rec-for-trees}
F_{T} = q^{\lambda}\hat{F}_{1}\ldots \hat{F}_{r}Z^{\aa} + q^{\lambda'}\hat{F}_{1}'\ldots \hat{F}_{s}' Z^{\aa'},
\end{eqnarray}
where the $\hat{F}_{j}$ and $\hat{F}_{k}'$ are each of the form $L[\hh](F_{[1, p]})$ for some $\hh \in \ZZ^{n}$, $p \in [1, \ell]$.  By (\ref{def:L-function}) and the induction hypothesis, the coefficients of any monomial $Z^{\cc}$ ($\cc \in \ZZ^{n}$) in the  $\hat{F}_{j}$ and $\hat{F}_{k}'$ are powers of $q$. Thus, (\ref{rec-for-trees}) implies that $F_{T} = \sum P_{S}(q^{\frac{1}{2}})Z^{\ee_{S}}$, where the sum ranges over $S \subset [1, n]$, and each $P_{S} \in \ZZ[x, x^{-1}]$ is a subtraction-free Laurent polynomial.

Now set $q = 1$ and $Z_{i} = u_{i}$ for all $i \in [1, n]$ in the equation $F_{T} = \sum P_{S}(q^{\frac{1}{2}})Z^{\ee_{S}}$.  By the last part of Theorem \ref{thm:quantum-fpoly}, this gives an expression for $F_{T}^{cl}$.   Note that no monomial in $F_{T}$ with nonzero coefficient disappears under this specialization.   By Proposition \ref{fpoly-trees}, it follows that $Z^{\ee_{S}}$ occurs with nonzero coefficient in $F_{T}$ if and only if $S \subset T$ and  (\ref{subsetprop0}) is satisfied; furthermore, in this case, we have $P_{S}(1) = 1$, which forces $P_{S}(q^{\frac{1}{2}})$ to be a power of $q$, as desired.
This finishes the proof of (\ref{eqn:trees-quantum-fpoly}).

To conclude the proof of the theorem, we need to prove that
\begin{eqnarray} \label{eqn:components}
-\gg_{T} \cdot \ee_{S} = \phi(S)
\end{eqnarray} 
for any $S \subset T$ such that $S$ satisfies  (\ref{subsetprop0}).  Fix such a subset $S$.  By Proposition \ref{thm:gvec-trees}, 
\begin{eqnarray} \label{eqn:g-dot}
-\gg_{T} \cdot \ee_{S} = \sum_{k \in S} (1- |I_{\rm out}(k)|) = |S| - \sum_{k \in S} |I_{\rm out}(k)|.
\end{eqnarray}
Since (\ref{subsetprop0}) holds for $S$,  we have that for $k \in S$, $|I_{\rm out}(k)|$ is the number of $i \in S$ such that $k \rightarrow i$ in $Q^{0}$.    Thus, $\sum_{k \in S} |I_{\rm out}(k)|$ is equal to the number of edges whose endpoints are both in $S$, which is also equal to the number of edges in the subgraph of $Q^{0}$ induced by $S$.  The number of vertices in a tree minus the number of edges is 1, so $|S| - \sum_{k \in S} |I_{\rm out}(k)|$ must equal the number of components in the subgraph of $Q^{0}$ induced by $S$.  \end{proof}

Next, we state a formula for $\gg$-vectors in a particular case.   This formula was given in \cite{qps2} in terms of representations of quivers with potentials. Let $B^{0}$ be an $n \times n$ skew-symmetric matrix with entries from $\{ 0, 1, -1 \}$.      We recall some notation and definitions from \cite{qps2}, omitting certain technical details which are not needed here.   Let $M$ be the quiver representation of $Q^{0}$ such that $M(i) = \CC$ if $i \in T$, and $M(i) = 0$ otherwise.  Also, for the arrow $a: i \rightarrow j$, let $a_{M}: M(i) \rightarrow M(j)$ be the  identity map if  $i, j \in T$, and let $a_{M}$ be the 0-map otherwise.

For $k \in [1, n]$, let 
\begin{eqnarray}
M_{\mbox{\tiny{in}}}(k) = \bigoplus_{i \in I_{\mbox{\tiny{in}}}(k)}  M(i), & M_{\mbox{\tiny{out}}}(k) = \displaystyle \bigoplus_{j \in I_{\mbox{\tiny{out}}}(k)}  M(j),
\end{eqnarray}
where $I_{\rm in}(k)$, $I_{\rm out}(k)$ were defined at (\ref{def:I-out-in}).

A certain linear map $\gamma_{k}: M_{\mbox{\tiny{out}}}(k) \rightarrow M_{\mbox{\tiny{in}}}(k)$ was defined in \cite{qps2}.  This map is given by the matrix $\gamma_{k} = (\gamma_{k}(i, j))$, where the rows of the matrix are indexed by $i \in I_{\mbox{\tiny{in}}}(k)$, the columns are indexed by $j \in I_{\mbox{\tiny{out}}}(k) $, and $\gamma_{k}(i, j)$ is either a nonzero element of $\CC$ if there exists a directed path from $j$ to $i$ in $Q^{0}$ with all vertices contained in $T$, and $\gamma_{k}(i, j) = 0$ otherwise.

\begin{lemma}  \label{lemma:gamma-k}
Let $k \in [1, n]$.  The rank of $\gamma_{k}$ is equal to the maximum number $r \in \ZZ_{\geq 0}$ for which there exist distinct $j_{1}, \ldots, j_{r} \in I_{\rm out}(k)$ and distinct $i_{1}, \ldots, i_{r} \in I_{\rm in}(k)$ such that there is a directed path from $j_{s}$ to $i_{s}$ in $T$ for all $s \in [1, r]$.    (In particular, $\rank(\gamma_{k})$ does not depend on the specific nonzero values in the matrix $\gamma_{k}$.)
\end{lemma}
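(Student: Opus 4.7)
The plan is to prove the claimed equality by separately establishing the two inequalities $\rank(\gamma_k) \le r$ and $\rank(\gamma_k) \ge r$, where $r$ denotes the maximum matching size from the statement. The independence of $\rank(\gamma_k)$ from the specific nonzero values will then be automatic, since the rank will have been identified with a purely combinatorial invariant of the zero/nonzero pattern.

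The upper bound $\rank(\gamma_k) \le r$ is the easy direction and I would dispatch it first. If the rank were greater than $r$, then some $(r+1)\times(r+1)$ submatrix of $\gamma_k$ would have nonzero determinant, and by the Leibniz formula at least one permutation $\sigma$ must contribute a nonzero term, i.e., satisfy $\gamma_k(i_{\sigma(s)},j_s)\ne 0$ for every $s$. This immediately produces distinct $j_1,\dots,j_{r+1}\in I_{\rm out}(k)$ and distinct $i_{\sigma(1)},\dots,i_{\sigma(r+1)}\in I_{\rm in}(k)$ admitting directed paths $j_s\to i_{\sigma(s)}$ in $T$, contradicting the maximality of $r$. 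This argument already shows that the rank cannot exceed the combinatorial quantity, regardless of the specific scalars.

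For the lower bound $\rank(\gamma_k)\ge r$, I would fix a maximum matching $\{(j_s,i_s)\}_{s=1}^r$ and exhibit an $r\times r$ submatrix with nonzero determinant. The central tool is the reachability partial order $\le$ on $T$, where $u\le v$ iff there is a directed path $u\to v$ in the tree; since $T$ induces a tree in $Q^0$, any two comparable vertices are joined by a \emph{unique} directed path in $T$, which makes $\le$ an honest partial order. My plan is to select the maximum matching so that the paths $j_s\to i_s$ are ``as disjoint as possible'' (for instance by minimizing $\sum_s \operatorname{length}(j_s\to i_s)$ among maximum matchings) and then reindex the pairs along a linear extension of $\le$, so that the resulting submatrix becomes lower-triangular with the matched (nonzero) entries on the diagonal; its determinant is then a nonzero product.

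The main obstacle is verifying that such a selection really does force triangularity---equivalently, that no permutation $\sigma\ne\mathrm{id}$ can also yield a nonzero term $\prod_s\gamma_k(i_{\sigma(s)},j_s)$. My intended strategy is induction on $|T|$: remove a leaf $\ell$ of the underlying undirected tree and case-split according to whether $\ell$ belongs to $I_{\rm in}(k)$, to $I_{\rm out}(k)$, or to neither, and whether $\ell$ participates in the chosen matching. The uniqueness of paths in a tree, together with the extremality of the matching, should either reduce the problem to a smaller tree $T\setminus\{\ell\}$ via the inductive hypothesis or produce a leaf-level contradiction with the hypothetical existence of a competing permutation. Getting this induction to close cleanly---particularly in the case where $\ell$ is an internal endpoint of several paths $j_s\to i_s$---is the most delicate step and will require the combinatorial extremality of the matching to rule out the ``crossing'' configurations that would allow cancellations.
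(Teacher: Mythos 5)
Your upper bound $\rank(\gamma_k)\le r$ is correct and is essentially the paper's argument: a nonzero minor forces, via the Leibniz expansion, a system of vertex-disjoint sources and targets joined by directed paths, contradicting maximality of $r$.

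The lower bound, however, has a genuine gap, and it is exactly the step you flag as ``the most delicate'': you never actually prove that some $r\times r$ submatrix has nonzero determinant. Your plan is to choose an extremal matching (minimizing total path length), reindex along a linear extension of the reachability order to get a triangular matrix, and then run an induction on $|T|$ by deleting leaves to rule out competing permutations --- but none of this is carried out, and the case analysis you describe (a leaf that is an internal endpoint of several matching paths, ``crossing'' configurations) is left open. The machinery is also heavier than needed. The paper's proof shows that for \emph{any} matching $\{(j_s,i_s)\}_{s=1}^r$ (no extremal choice, no reindexing), the identity is the \emph{only} permutation $\sigma\in S_r$ with $\gamma_k(i_{\sigma(s)},j_s)\ne 0$ for all $s$: if $\sigma\ne\mathrm{id}$ existed, then traversing an orbit of $\sigma$ and alternating between the matching paths $j_{\sigma^m(1)}\to i_{\sigma^m(1)}$ and the paths $j_{\sigma^m(1)}\to i_{\sigma^{m+1}(1)}$ produces a cycle in the subgraph induced by $T$, contradicting the hypothesis that $T$ induces a tree in $Q^0$. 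Hence $\det\Gamma=\prod_s\gamma_k(i_s,j_s)\ne 0$ immediately. The missing idea in your write-up is that the tree structure alone --- not any extremality of the matching --- kills every off-diagonal permutation term; once you see this, the triangularization and the leaf-deletion induction become unnecessary.
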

\begin{proof} If $r = 0$, then $\gamma_{k} = 0$, and the lemma holds.  Assume for the remainder of the proof that $r \geq 1$.    Let $j_{1}, \ldots, j_{r}, i_{1}, \ldots, i_{r}$ be vertices as in the statement of the lemma.   

We claim that there does not exist $\sigma$ in the symmetric group $S_{r}$, $\sigma \neq \mbox{id}$, such that $\gamma_k(i_{\sigma(s)}, j_{s}) \neq 0$ for all $s \in [1, r]$.  For the sake of contradiction, assume that such a $\sigma$ exists.  Let $p$ be the order of $\sigma$ in $S_{r}$.  Then there exists a directed path from $j_{s}$ to $i_{\sigma(s)}$ in $T$ for all $s \in [1, r]$, so it follows that there is a cycle in $T$ containing the vertices $i_{1}, j_{1}, i_{\sigma(1)}, j_{\sigma(1)}, \ldots, j_{\sigma^{p - 1}(1)}, i_{\sigma^{p}(1)} = i_{1}$, which contradicts that fact that the subgraph of $Q^{0}$ induced by $T$ is a tree.

Let $\Gamma$ be the $r \times r$ submatrix of $\gamma_{k}$ with rows indexed by $i_{1}, \ldots, i_{r}$, and columns indexed by $j_{1}, \ldots, j_{r}$.    Then the determinant of  $\Gamma$ is $\prod_{s = 1}^{r} \gamma_{k}(i_{s}, j_{s})$, which is nonzero since $\gamma_{k}(i_{s}, j_{s}) \neq 0$ for all $s \in [1, r]$.  This proves that $\rank(\gamma_{k}) \geq r$.

Now let $r' = \rank(\gamma_{k})$, and suppose that $\Gamma'$ is an $r' \times r'$ invertible submatrix of $\gamma_{k}$ with rows indexed by $i_{1}', \ldots, i_{r'}' \in I_{\rm in}(k)$ and columns indexed by $j_{1}', \ldots, j_{r'}' \in I_{\rm out}(k)$.  In order for the determinant of $\Gamma'$ to be nonzero, there must exist $\sigma \in S_{r'}$ such that $\gamma_{i_{\sigma(s)}, j_{s}} \neq 0$ for all $s \in [1, r']$.   Thus, there is a directed path from $j_{s}$ to $i_{\sigma(s)}$ in $T$ for each $s \in [1, r']$, which means that $\rank(\gamma_{k}) \leq  r$.
\end{proof}

\begin{proposition} \label{thm:gvec-trees2} The $\gg$-vector $\gg_{T}$ corresponding to $T$ is given by $(g_{1}, \ldots, g_{n})$, where 
\begin{eqnarray}
g_{k} & = & \dim(\ker(\gamma_{k})) - \dim(M(k)) \\
& = & |  I_{\mbox{\tiny{out}}}(k)  | - \rank(\gamma_{k}) - \dim(M(k)).
\end{eqnarray}
for all $k \in [1, n]$.
\end{proposition}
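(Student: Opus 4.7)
The plan is to extract $g_k$ from the explicit cluster-variable formula in the principal-coefficients setting and then reduce to a combinatorial identity about the tree $T$. By Theorem~\ref{thm:cluster-var-formula},
\begin{equation*}
x_T \;=\; F_T^{cl}(\hat{y}_1,\ldots,\hat{y}_n)\,x_1^{g_1}\cdots x_n^{g_n} \qquad\text{in } \prinA(B^{0},t_0),
\end{equation*}
and by Proposition~\ref{fpoly-trees} every coefficient of $F_T^{cl}$ equals~$1$. Substituting $\hat{y}_j = y_j\prod_{i} x_i^{b^{0}_{ij}}$, no cancellation in the $x$-variables can occur, so the exponent of $x_k$ in the denominator of $x_T$ equals $-\min_{S}\bigl(g_k + \sum_{j\in S}b^{0}_{kj}\bigr)$, where $S$ ranges over subsets of $T$ satisfying \eqref{subsetprop0}. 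Proposition~\ref{prop:cluster-vars-trees} identifies this denominator exponent with $(\ee_T)_k = \dim M(k)$. Using that $b^{0}$ has entries in $\{-1,0,1\}$ and that $b^{0}_{kj}=-1$ on $I_{\mathrm{out}}(k)$, $b^{0}_{kj}=+1$ on $I_{\mathrm{in}}(k)$, and $b^{0}_{kj}=0$ elsewhere in $T$, I have $\sum_{j\in S}b^{0}_{kj} = |S\cap I_{\mathrm{in}}(k)| - |S\cap I_{\mathrm{out}}(k)|$, and the previous equation rearranges to
\begin{equation*}
g_k \;=\; \max_{S}\bigl(|S\cap I_{\mathrm{out}}(k)|-|S\cap I_{\mathrm{in}}(k)|\bigr) \;-\; \dim M(k).
\end{equation*}

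The second equality in the proposition is just rank-nullity for $\gamma_k\colon M_{\mathrm{out}}(k)\to M_{\mathrm{in}}(k)$, so what remains is the combinatorial identity
\begin{equation*}
\max_{S}\bigl(|S\cap I_{\mathrm{out}}(k)|-|S\cap I_{\mathrm{in}}(k)|\bigr) \;=\; |I_{\mathrm{out}}(k)|-\rank(\gamma_k).
\end{equation*}
By Lemma~\ref{lemma:gamma-k}, $\rank(\gamma_k)$ is the maximum matching size in the bipartite graph $G$ with parts $I_{\mathrm{out}}(k)$ and $I_{\mathrm{in}}(k)$ and an edge $(j,i)$ whenever a directed path $j\to\cdots\to i$ exists inside $T$. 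I would establish the identity via König's theorem. For the upper bound, I observe that for any $S$ satisfying \eqref{subsetprop0}, the set $C = (I_{\mathrm{out}}(k)\setminus S)\cup(S\cap I_{\mathrm{in}}(k))$ is a vertex cover of $G$: any edge $(j,i)$ of $G$ with $j\in S$ forces $i\in S$ by iterating \eqref{subsetprop0} along the path, hence $i\in S\cap I_{\mathrm{in}}(k)\subset C$. Therefore $|C|\geq\rank(\gamma_k)$, which rearranges to the desired inequality. For the lower bound, I take a minimum vertex cover $C = C_J\sqcup C_I$ of $G$ with $|C_J|+|C_I|=\rank(\gamma_k)$ and let $S$ be the forward closure in $T$ of $I_{\mathrm{out}}(k)\setminus C_J$; the vertex-cover property forces every $i\in I_{\mathrm{in}}(k)$ reachable in $T$ from $I_{\mathrm{out}}(k)\setminus C_J$ to lie in $C_I$, yielding $|S\cap I_{\mathrm{out}}(k)|-|S\cap I_{\mathrm{in}}(k)|\geq|I_{\mathrm{out}}(k)|-|C_J|-|C_I|$.

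The two cases of the proposition unify through $\dim M(k) = (\ee_T)_k$. For $k\in T$ the tree assumption on $T$ additionally forces $\rank(\gamma_k)=0$: a directed path $j\to\cdots\to i$ in $T$ with $j\in I_{\mathrm{out}}(k)$ and $i\in I_{\mathrm{in}}(k)$, completed by the arrows $k\to j$ and $i\to k$ which also lie inside $T$, would yield a cycle in the undirected graph underlying $T$, contradicting the tree hypothesis; the formula then specialises to Proposition~\ref{thm:gvec-trees}. The main technical obstacle is the combinatorial identity above: the "$\leq$" direction is a clean vertex-cover observation, but the "$\geq$" direction demands care in the construction of $S$, where one must exploit the vertex-cover condition to guarantee that the forward closure of $I_{\mathrm{out}}(k)\setminus C_J$ picks up no unwanted elements of $I_{\mathrm{in}}(k)$ outside $C_I$.
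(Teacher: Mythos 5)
Your proof is correct, but it takes a genuinely different route from the paper. The paper disposes of Proposition~\ref{thm:gvec-trees2} in one sentence by citing Theorem~5.1 and Proposition~5.7 of \cite{qps2}: the former is the general representation-theoretic formula $g_{k}=\dim\ker(\gamma_{k})-\dim M(k)$ for $\gg$-vectors attached to representations of quivers with potentials, and the latter identifies $M$ as the representation corresponding to $x_{T}$. You instead extract $g_{k}$ from data already recorded in this paper --- the separation formula \eqref{eq:cluster-var-fpoly}, the denominator vector $\ee_{T}$ from Proposition~\ref{prop:cluster-vars-trees}, and the multiplicity-free $F$-polynomial of Proposition~\ref{fpoly-trees} --- and reduce everything to the identity $\max_{S}\bigl(|S\cap I_{\rm out}(k)|-|S\cap I_{\rm in}(k)|\bigr)=|I_{\rm out}(k)|-\rank(\gamma_{k})$, proved by K\"onig duality via the matching description of $\rank(\gamma_{k})$ in Lemma~\ref{lemma:gamma-k}. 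The steps check out: the $y$-monomials separate the terms of $F^{cl}_{T}(\hat{y}_{1},\ldots,\hat{y}_{n})$, so the exponent of $x_{k}$ in the denominator really is $-\min_{S}(g_{k}+\sum_{j\in S}b^{0}_{kj})$; the vertex-cover bound follows by iterating \eqref{subsetprop0} along a directed path inside $T$; and the forward closure of $I_{\rm out}(k)\setminus C_{J}$ is \eqref{subsetprop0}-closed because appending one arrow to a directed path in $T$ either revisits a vertex already collected or yields a new directed path. What your route buys is independence from the hardest external input (the DWZ $\gg$-vector theorem), replaced by elementary combinatorics, though it still leans on Propositions~\ref{prop:cluster-vars-trees} and \ref{fpoly-trees}, which are themselves imported from \cite{qps2}. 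One small remark: your final paragraph treating $k\in T$ separately is only a consistency check against Proposition~\ref{thm:gvec-trees}; your main computation is already uniform in $k\in[1,n]$, since the denominator exponent equals $\dim M(k)$ in both cases.
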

\begin{proof}
The proposition follows immediately from Theorem 5.1 and Proposition 5.7 of \cite{qps2}.  
\end{proof}

\subsection{Type $\mbox{A}_{n}$}

In this subsection, assume that $B^{0} = (b^{0}_{ij})$ is an $n \times n$ exchange matrix of type $\mbox{A}_{n}$.    Recall that such a matrix $B^{0}$ may be obtained via a sequence of matrix mutations from the $n \times n$ matrix $B = (b_{ij})$, where 
\begin{eqnarray}
b_{ij} = \left\{  \begin{array}{rl} 1 & \mbox{ if } j = i + 1 \\
                                                        -1 & \mbox{ if } j = i - 1 \\
                                                        0 & \mbox{ otherwise} 
                                                        \end{array}   \right.
\end{eqnarray}

We will show that each cluster variable in the cluster algebra $\prinA(B^{0}, t_{0})$ corresponds to an induced chain $C$, which means that Theorem \ref{thm:quantum-fpoly-trees} may be used to compute all quantum $F$-polynomials in the type $\mbox{A}_{n}$ case.  Also, we compute  $\gg$-vectors for type $\mbox{A}_{n}$ using Proposition \ref{thm:gvec-trees2}.  

Denote by $\Phi_{+}(B^{0})$ the collection of subsets $C$ of $[1, n]$ such that the subgraph of $Q^{0}$ induced by $C$ is a chain.   (A \emph{chain} is an alternating sequence $v_{1}, e_{1}, v_{2},  \ldots, e_{p}, v_{p}$ of distinct vertices and edges such that the edge $e_{i}$ has vertices $v_{i}, v_{i + 1}$ for $i = 1, \ldots, p - 1$, and there are no other edges connecting the vertices $v_{1}, \ldots, v_{p}$.)

\begin{proposition} \label{typeAclustervars} The cluster variables in $\mathcal{A}$ which are not in the initial cluster are in bijective correspondence with the elements of $\Phi_{+}(B^{0})$.  To be more specific, if a cluster variable corresponds to the set $C \in \Phi_{+}(B^{0})$, then its denominator vector is $\ee_{C} = \sum_{i \in C} \ee_{i}$. 
\end{proposition}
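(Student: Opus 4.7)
The proof naturally splits into an injection step and a surjectivity step.

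\medskip

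\noindent\textbf{Injection.} For each $C \in \Phi_+(B^0)$, the subgraph of $Q^0$ induced on $C$ is by definition a path. Listing its vertices in path order as $c_1, \ldots, c_\ell$ produces a labeling in which, for every $i$, the set $\{c_1, \ldots, c_i\}$ induces a chain in which $c_i$ is a leaf. This is precisely the labeling convention set up at the beginning of Section~\ref{section:examples}, so after an appropriate relabeling of $[1,n]$ the hypotheses of Proposition~\ref{prop:cluster-vars-trees} are met, and we obtain a cluster variable $x_C \in \prinA(B^0, t_0)$ whose denominator vector is $\ee_C$. Since distinct $C$'s give distinct $0/1$-vectors $\ee_C$, none of which equals any $-\ee_i$, the assignment $C \mapsto x_C$ is an injection from $\Phi_+(B^0)$ into the set of non-initial cluster variables that preserves denominator vectors.

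\medskip

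\noindent\textbf{Surjectivity.} For surjectivity I would invoke the classical polygon realization of type-$\mbox{A}_n$ cluster algebras: cluster variables are in bijection with diagonals of a convex $(n+3)$-gon $P$, clusters correspond to triangulations, and the initial cluster corresponds to some triangulation $T_0$ whose exchange quiver is $Q^0$. This gives $\binom{n+3}{2} - (n+3) = \frac{n(n+3)}{2}$ cluster variables, of which $n$ are initial, leaving $\frac{n(n+1)}{2}$ non-initial ones. For each non-initial diagonal $d$, let $C(d) \subset [1,n]$ denote the set of indices of diagonals of $T_0$ that $d$ crosses. Then $C(d)$ is an induced chain of $Q^0$: consecutive crossed diagonals share a triangle of $T_0$ and hence are joined by an arrow of $Q^0$, while no other pair among the crossed diagonals is adjacent in $Q^0$. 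Conversely, each $C \in \Phi_+(B^0)$ arises from a unique diagonal $d$, namely the one connecting the extremal corners of the triangle strip that $C$ indexes. In the polygon model the denominator vector of the cluster variable attached to $d$ is the $0/1$-vector recording which diagonals of $T_0$ are crossed by $d$, which is exactly $\ee_{C(d)}$. So $d \mapsto C(d)$ is a denominator-vector-preserving bijection between non-initial diagonals and $\Phi_+(B^0)$; combined with the injection of the previous paragraph, this forces $C \mapsto x_C$ to be onto.

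\medskip

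\noindent\textbf{Main obstacle.} The technical core of the plan is the combinatorial identification of crossing sets with induced chains in $Q(T_0)$: adjacency of consecutive crossed diagonals follows from the construction of $Q(T_0)$ out of adjacent triangles, while non-adjacency of non-consecutive crossed diagonals uses that a diagonal cannot cross the same $T_0$-diagonal twice, so the triangle strip threaded by $d$ does not fold back on itself. This is standard in the Fomin--Shapiro--Thurston surface framework and can simply be cited. An alternative, entirely self-contained approach would induct on $n$ via the denominator-vector recurrence~\eqref{eq:exchange-denominator}, showing directly that $|\Phi_+(B^0)| = \frac{n(n+1)}{2}$ is mutation-invariant and that every non-initial denominator vector has the form $\ee_C$ for some $C \in \Phi_+(B^0)$; this works but is substantially more computational.
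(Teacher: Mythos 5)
Your proposal is correct and follows essentially the same route as the paper: the paper also reduces the statement to the polygon model, shows (via the Buan--Vatne characterization of type $\mbox{A}_n$ quivers) that the set of initial diagonals crossed by a non-initial diagonal induces a chain, constructs for each chain the diagonal joining the extremal corners of the corresponding triangle strip, and obtains the denominator vector from Proposition \ref{prop:cluster-vars-trees}. The only cosmetic differences are your extra counting of $\tfrac{n(n+1)}{2}$ non-initial variables (not needed once the diagonal--chain bijection is in hand) and your citing the crossing-number description of denominator vectors where the paper derives it from the explicit mutation sequence.
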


\begin{remark}
For the acyclic case in finite type, it was proven in \cite{ca2} that the cluster variables not in the initial cluster are in bijective correspondence with the positive roots of the given type.  In the acyclic case, Proposition \ref{typeAclustervars} is a consequence of this result.  More recently, Proposition \ref{typeAclustervars} was independently stated and proven in \cite{mw}.
\end{remark} 

To prove the proposition, we will use the following combinatorial description of cluster algebras of type $\mbox{A}_{n}$ given in \cite{ca2}.  In this realization, the cluster variables are in bijective correspondence with the diagonals of the $(n + 3)$-gon $\mathbb{P}_{n + 3}$, and the clusters correspond to maximal sets of noncrossing diagonals, i.e., to triangulations of $\mathbb{P}_{n + 3}$.  (Note that two diagonals cross if they intersect in an interior point.) Let $\mathcal{T} = ( \beta_{1}, \ldots, \beta_{n})$ be a list of pairwise distinct noncrossing diagonals of $\mathbb{P}_{n + 3}$.  Write $\Delta(\mathcal{T})$ for the corresponding set of triangles.  In this setting, cluster mutations are encoded as follows.   Let $k \in [1, n]$.  To mutate $\mathcal{T}$ in direction $k$, let $\Delta_{1}, \Delta_{2}$ be the two triangles in $\Delta(\mathcal{T})$ which have $\beta_{k}$ as a side, and let $a, b$ be the vertices opposite the side $\beta_{k}$ in each of these triangles.  Then $\mu_{k}(\mathcal{T})$ is the list of diagonals obtained from $\mathcal{T}$ by replacing $\beta_{k}$ by the diagonal $\overline{ab}$.

One may associate to $\mathcal{T}$ a quiver $Q(\mathcal{T})$ on the set of vertices $[1, n]$.  For the edges, let $i, j \in [1, n]$, $i \neq j$.  If  there is no triangle in $\mathcal{T}$ such that $\beta_{i}$, $\beta_{j}$ are sides of the triangle, then there is no edge between $i$ and $j$.  Otherwise, suppose that the endpoints of $\beta_{i}$ are $a, b$, and the endpoints of $\beta_{j}$ are $a, c$.  Then $i \rightarrow j$ if $a, b, c$ are in clockwise order, and $i \leftarrow j$ if $a, b, c$ are in counterclockwise order.  The principal part of the exchange matrix $B(\mathcal{T}) = (b_{ij})$ corresponding to $\mathcal{T}$ is given by $b_{ij} = 0$ if there is no edge between $i$ and $j$, $b_{ij} = -1$ if $i \rightarrow j$, and $b_{ij} = 1$ if $i \leftarrow j$.    Note that $Q(B(\mathcal{T})) = Q(\mathcal{T})$.



In \cite{bv}, Buan and Vatne characterize all type $\mbox{A}_{n}$ quivers (i.e., all quivers $Q$ of the form $Q = Q(\mathcal{T})$ where $\mathcal{T}$ is a triangulation of $\mathbb{P}_{n + 3}$).    

\begin{lemma} \cite[Proposition 2.4]{bv}\label{quiverlemma1} Let $Q$ be a quiver with vertex set $[1, n]$.   Then $Q$ is of type $\mbox{A}_{n}$ if and only (1)-(4) hold:
\begin{enumerate}
\item Any induced cycle in $Q$ is an oriented 3-cycle.  (In particular, there are no multiple edges.)
\item The degree of any vertex is at most 4.
\item If a vertex $i$ has degree 4, then two of the edges containing $i$ are in a 3-cycle, and the other two edges containing $i$ are in another 3-cycle.
\item If a vertex $i$ has degree 3, then two of the edges containing $i$ are in a 3-cycle, and the other edge does not belong to any 3-cycle.
\end{enumerate}
In particular, it follows that any induced tree in $Q$ must be a chain.
\end{lemma}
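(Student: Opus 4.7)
The plan is to prove the two directions of the biconditional separately, and then derive the ``in particular'' statement about induced trees.

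For the forward direction, I would show that any quiver $Q(\mathcal{T})$ arising from a triangulation $\mathcal{T}$ of $\mathbb{P}_{n+3}$ satisfies (1)--(4), and then note that the class of quivers satisfying (1)--(4) is closed under the type-$\mbox{A}_{n}$ construction. The key observation is that a diagonal $\beta$ of $\mathbb{P}_{n+3}$ is a side of at most two triangles in $\Delta(\mathcal{T})$, and each such triangle contributes at most two edges at the corresponding vertex of $Q(\mathcal{T})$, immediately yielding the degree bound in (2). A triangle in $\Delta(\mathcal{T})$ all of whose sides are diagonals gives a 3-cycle in $Q(\mathcal{T})$, oriented consistently by the clockwise/counterclockwise rule used to define $Q(\mathcal{T})$; triangles with exactly two diagonal sides give a pair of non-3-cycle edges. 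Sorting by the number of diagonal sides in the two triangles adjacent to $\beta$ gives exactly the four cases of (3), (4), and forces induced cycles to come from triangles themselves, giving (1).

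For the reverse direction, I would induct on $n$. The base cases $n \leq 2$ are direct. For the inductive step, the goal is to find a ``removable'' vertex $v$ whose deletion yields a quiver $Q'$ on $[1,n]\setminus\{v\}$ that (a) still satisfies (1)--(4), and (b) can be realized as $Q(\mathcal{T}')$ for a triangulation $\mathcal{T}'$ of $\mathbb{P}_{n+2}$ by the inductive hypothesis, so that an ``ear'' diagonal representing $v$ can be re-attached to produce $\mathcal{T}$. A natural candidate is a vertex $v$ that is either a leaf, or a degree-$2$ vertex not contained in any $3$-cycle. I would argue by a counting/extremal argument using (1)--(4) that such a vertex must exist in every nonempty $Q$: if every vertex had degree $\geq 3$, then by (3) and (4) every vertex would sit on a $3$-cycle, and a global analysis (e.g., counting $3$-cycles versus vertices, or tracking a ``boundary'' component) would produce a contradiction with (1). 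The main obstacle in the whole argument is precisely this step: verifying the existence of a removable vertex, and then checking that (1)--(4) are preserved after deletion, since conditions (3) and (4) are sensitive to the local structure near $v$.

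For the ``in particular'' claim, I would argue directly from (3) and (4). If $v$ has degree $\geq 3$ in $Q$, then by (3)/(4) the set of neighbors of $v$ contains two vertices $a,b$ with $vab$ a $3$-cycle; the subquiver induced on any set of vertices containing $\{v,a,b\}$ already contains this $3$-cycle and hence cannot be a tree. Therefore any induced tree must have maximum degree $\leq 2$. If $v$ has exactly two neighbors $a,b$ in the induced tree and $vab$ were a $3$-cycle in $Q$, the edge $ab$ would also lie in the induced subquiver, producing a cycle; thus the two neighbors of a degree-$2$ vertex in an induced tree cannot be joined in $Q$. Combining these observations, every induced tree in $Q$ has all vertices of degree $\leq 2$ and no ``triangular shortcut,'' so it is a chain, completing the proof.
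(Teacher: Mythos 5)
First, note that the paper does not prove this lemma at all: the biconditional is quoted verbatim from Buan--Vatne \cite[Proposition 2.4]{bv}, and the only part that could be regarded as the paper's own contribution is the closing remark that an induced tree must be a chain, which is left as an immediate consequence of (1)--(4). Your derivation of that remark is essentially correct: a vertex of degree $\geq 3$ in the induced tree has, by (3)/(4) and a small pigeonhole step you elide (for a degree-$4$ vertex of $Q$, three tree-neighbours must include both members of one of the two $3$-cycle pairs), two tree-neighbours spanning a $3$-cycle, contradicting acyclicity; hence the induced tree has maximum degree $2$ and, being connected and induced, is a chain. Your sketch of the forward direction (triangulation quivers satisfy (1)--(4) by sorting the two triangles adjacent to a diagonal according to how many of their sides are diagonals) is also the standard argument and is fine.

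The reverse direction, however, contains a genuine error, not merely the ``obstacle'' you flag. Your candidate removable vertex is ``a leaf, or a degree-$2$ vertex not contained in any $3$-cycle,'' and you assert such a vertex must exist in every nonempty $Q$ satisfying (1)--(4). This is false: a single oriented $3$-cycle satisfies (1)--(4) (it is the quiver of a cyclic triangulation of the hexagon, hence of type $\mbox{A}_{3}$), yet every vertex has degree $2$ and lies in a $3$-cycle, so it has neither kind of removable vertex. Worse, the candidate is the wrong one for the re-attachment step: an ``ear'' diagonal $\beta$ (one cutting off a triangle with two boundary sides) is a side of exactly one other triangle $\Delta$, and the quiver edges at $\beta$ all come from $\Delta$; hence an ear vertex is either a leaf (if $\Delta$ has one other diagonal side) or a degree-$2$ vertex that \emph{is} in a $3$-cycle (if $\Delta$ has two other diagonal sides). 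A degree-$2$ vertex not in any $3$-cycle corresponds to a diagonal whose two adjacent triangles each contribute one edge, i.e.\ an interior diagonal that cannot be realized as an ear, so even when such a vertex exists your induction cannot re-attach it. The induction should instead remove a leaf or a degree-$2$ vertex lying in a $3$-cycle, prove that one of these always exists when $n\geq 2$ (this is where the real combinatorial work under hypotheses (1)--(4) lies), and verify that (1)--(4) survive the deletion and that the arrow orientations are compatible with re-inserting an ear on the correct side of the corresponding diagonal. As written, the reverse direction does not go through.
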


Let $\mathcal{T}^{0} = \{ \alpha_{1}, \ldots, \alpha_{n} \}$ be the triangulation of $\mathbb{P}_{n + 3}$ corresponding to the cluster at $t_{0}$.   Proposition \ref{typeAclustervars} is an immediate consequence of the following lemma:

\begin{lemma}  \label{lemma:diagonals} The diagonals of $\mathbb{P}_{n + 3}$ which are not in $\mathcal{T}^{0}$ are in bijective correspondence with the elements of $\Phi_{+}(B^{0})$.  Under this correspondence, if $\beta$ is a diagonal not in $\mathcal{T}^{0}$, then the corresponding subset of $[1, n]$ consists precisely of those $i \in [1, n]$ for which $\beta$ crosses $\alpha_{i}$.    Furthermore, if $C = \{ p_{1}, \ldots, p_{j} \} \in \Phi_{+}(B^{0})$ such that there is some edge between $p_{i}$ and $p_{i + 1}$ for $i = 1, \ldots, j - 1$, then the cluster variable corresponding to $C$ can be obtained from the initial cluster by mutating in directions $p_{1}, \ldots, p_{j}$.  The denominator vector of this cluster variable is $\ee_{C}$.
\end{lemma}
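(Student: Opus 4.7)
The plan is to work entirely within the polygon model of type $\mbox{A}_n$ cluster algebras recalled just before the lemma. Write $\mathcal{T}^0 = \{\alpha_1,\ldots,\alpha_n\}$ and recall from the discussion preceding Lemma \ref{quiverlemma1} that two vertices $i,j$ of $Q^0$ are connected by an edge iff $\alpha_i,\alpha_j$ are sides of a common triangle in $\mathcal{T}^0$. For a diagonal $\beta \notin \mathcal{T}^0$, define $\Psi(\beta) := \{ i \in [1,n] : \beta \text{ crosses } \alpha_i \}$. The lemma amounts to showing $\Psi$ is a bijection onto $\Phi_+(B^0)$, identifying the mutation sequence that produces $\Psi^{-1}(C)$, and reading off the denominator vector.

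The main step is to show $\Psi(\beta) \in \Phi_+(B^0)$. I would let $\Delta_0, \Delta_1, \ldots, \Delta_k$ denote the triangles of $\mathcal{T}^0$ traversed by $\beta$ in order, and $\alpha_{i_s}$ the diagonal shared by $\Delta_{s-1}$ and $\Delta_s$, so that $\Psi(\beta) = \{i_1,\ldots,i_k\}$. Since every interior diagonal of a triangulation belongs to exactly two triangles, an elementary observation is that $\alpha_{i_s},\alpha_{i_t}$ share a triangle iff $|s - t| = 1$; equivalently, the edges of $Q^0$ restricted to $\Psi(\beta)$ are exactly $i_1 - i_2 - \cdots - i_k$, which is a chain. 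Injectivity of $\Psi$ then follows because the endpoints of $\beta$ are the vertex of $\Delta_0$ opposite $\alpha_{i_1}$ and the vertex of $\Delta_k$ opposite $\alpha_{i_k}$, both recoverable from $\Psi(\beta)$ together with $\mathcal{T}^0$. For surjectivity, given $C = \{p_1,\ldots,p_j\}\in\Phi_+(B^0)$ with the chain ordering, define $\Delta_s$ ($s\in[1,j-1]$) to be the unique triangle of $\mathcal{T}^0$ containing both $\alpha_{p_s}$ and $\alpha_{p_{s+1}}$, let $\Delta_0,\Delta_j$ be the remaining triangles containing $\alpha_{p_1},\alpha_{p_j}$, and set $\beta(C)$ to be the segment from the vertex of $\Delta_0$ opposite $\alpha_{p_1}$ to the vertex of $\Delta_j$ opposite $\alpha_{p_j}$. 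A short geometric check (using that non-adjacent diagonals in $C$ share no triangle, so the $\Delta_s$ are pairwise distinct and bound a convex strip in $\mathbb{P}_{n+3}$) confirms that $\beta(C)$ is indeed a diagonal with $\Psi(\beta(C)) = C$.

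For the mutation statement, I would induct on $j$, proving that $\mathcal{T}^{(s)} := \mu_{p_s}\cdots\mu_{p_1}(\mathcal{T}^0)$ has $p_s$-th entry equal to $\beta(\{p_1,\ldots,p_s\})$. At step $s$, the $p_s$-th entry of $\mathcal{T}^{(s-1)}$ is still $\alpha_{p_s}$; one side of $\alpha_{p_s}$ is bordered by $\Delta_s$, unchanged from $\mathcal{T}^0$ because none of its sides lie among $\alpha_{p_1},\ldots,\alpha_{p_{s-1}}$, while the other side is bordered in $\mathcal{T}^{(s-1)}$ by a triangle having $\beta(\{p_1,\ldots,p_{s-1}\})$ as one side (by the inductive hypothesis). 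Flipping $\alpha_{p_s}$ across the resulting quadrilateral yields exactly $\beta(\{p_1,\ldots,p_s\})$. I expect the main obstacle here to be the careful bookkeeping of which triangle of $\mathcal{T}^{(s-1)}$ lies on the non-$\Delta_s$ side of $\alpha_{p_s}$; the hypothesis that the vertices of $C$ are ordered so that each $p_s$ is a leaf of the induced subgraph on $\{p_1,\ldots,p_s\}$ is what makes this triangle have the structure the induction requires.

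Finally, the denominator vector claim follows from the standard fact (from \cite{ca2}) that in the polygon model the denominator vector of the cluster variable attached to a diagonal $\beta$ relative to $\mathcal{T}^0$ equals $\sum_{i \in \Psi(\beta)} \ee_i$; this can be verified directly from \eqref{eq:denom-cluster-variable-initial} and \eqref{eq:exchange-denominator} by induction on the number of flips needed to reach $\beta$ from $\mathcal{T}^0$, tracking how diagonal crossings transform under a flip. Applied to $\beta = \beta(C)$ this yields denominator vector $\ee_C$, completing the lemma.
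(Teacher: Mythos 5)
Your proposal is correct and, for the two main parts of the lemma, follows essentially the same route as the paper: you trace $\beta$ through the triangles of $\mathcal{T}^{0}$ to read off the crossed diagonals, reverse the construction by taking the segment joining the opposite vertices of the end triangles of the resulting strip (using convexity of the union of triangles), and prove the mutation statement by induction on flips. Two local differences are worth noting. First, to see that $\Psi(\beta)$ induces a chain, you argue directly that $\alpha_{i_s}$ and $\alpha_{i_t}$ share a triangle iff $|s-t|=1$ (via distinctness of the traversed triangles), whereas the paper only rules out induced cycles and then invokes Lemma \ref{quiverlemma1} to upgrade ``induced tree'' to ``chain''; your version is slightly more self-contained and also gives injectivity of $\Psi$ cleanly. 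Second, for the denominator vector the paper simply cites Proposition \ref{prop:cluster-vars-trees} (which already asserts that the tree $T$, mutated in leaf order, yields denominator vector $\ee_{T}$), while you propose to reprove the crossing-number interpretation of denominator vectors from the recurrences \eqref{eq:denom-cluster-variable-initial}--\eqref{eq:exchange-denominator} by induction on flips. That fact is true (it is essentially Theorem 8.6 of the Fomin--Shapiro--Thurston paper, as the remark following the lemma notes), but ``tracking how diagonal crossings transform under a flip'' is the nontrivial part and you do not carry it out; since Proposition \ref{prop:cluster-vars-trees} is already available in the paper and applies verbatim to the consecutive ordering of the chain, you should cite it rather than re-derive the weaker special case.
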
 

\begin{proof}  First, consider a diagonal $\beta$ of $\mathbb{P}_{n + 3}$ that is not in $\mathcal{T}^{0}$.  Let $\mathcal{T}'$ be the set of diagonals in $\mathcal{T}^{0}$ which $\beta$ intersects.  This set $\mathcal{T}'$ may be constructed as follows: Start with one endpoint of $\beta$.  This endpoint is the vertex of a triangle $\Delta_{1}$ in the initial triangulation such that $\beta$ passes through the interior of the triangle.   The diagonal $\beta$ crosses another diagonal $\alpha_{p_{1}}$ which is a side of the triangle $\Delta_{1}$.  The diagonal $\alpha_{p_{1}}$ is a side of another triangle $\Delta_{2}$ in the initial triangulation.  Either $\beta$ intersects a vertex of $\Delta_{2}$, in which case $\mathcal{T}' = \{ \alpha_{p_{1}} \}$, or $\beta$ crosses another side $\alpha_{p_{2}}$ of $\Delta_{2}$.   In the latter case, $\alpha_{p_{2}}$ is the side of another triangle $\Delta_{3}$ in the initial triangulation, and it follows that either $\mathcal{T}' = \{\alpha_{p_{1}}, \alpha_{p_{2}}\}$, or that $\alpha$ crosses another side $\alpha_{p_{3}} \neq \alpha_{p_{2}}$ of $\Delta_{2}$.  Continuing this process, we get $\mathcal{T}' = \{ \alpha_{p_{1}}, \ldots, \alpha_{p_{j}} \}$, and triangles $\Delta_{1}, \ldots, \Delta_{j}$ such that for each $i = 1, \ldots, j - 1$, the diagonal $\alpha_{p_{i}}$ is a side of the triangles $\Delta_{i}$ and $\Delta_{i + 1}$.  It is clear that the subgraph of $Q^{0}$ induced by the vertices  $p_{1}, \ldots, p_{j}$ does not contain a cycle, since Lemma \ref{quiverlemma1} implies that the vertices in any induced cycle in $Q^{0}$ correspond to the diagonals in a triangle in $\mathcal{T}^{0}$, and $\beta$ cannot cross all of the sides of a triangle.  By  Lemma \ref{quiverlemma1}, the subgraph of $Q^{0}$ induced by $p_{1}, \ldots, p_{j}$ must be a chain.

Next, consider a sequence $p_{1}, \ldots, p_{j}$ of vertices from $[1, n]$ such that the subgraph of $Q^{0}$ induced by these vertices is a chain (with an edge between any two consecutive vertices in the list).  The goal is to find a diagonal $\beta$ of $\mathbb{P}_{n + 3}$ which crosses $\alpha_{p_{1}}, \ldots, \alpha_{p_{j}}$ and no other diagonals in $\mathcal{T}^{0}$.  If $j = 1$, then let $\Delta_{0}$, $\Delta_{1}$ be the triangles in $\Delta(\mathcal{T}^{0})$ which have $\alpha_{p_{1}}$ as a side.  For $j \geq 2$,  any two consecutive diagonals $\alpha_{p_{i}}, \alpha_{p_{i + 1}}$ are the sides of a triangle $\Delta_{i}$ in the initial triangulation; also, there exist triangles $\Delta_{0}$, $\Delta_{j}$ with $\Delta_{0} \neq \Delta_{1}$, $\Delta_{j} \neq \Delta_{j - 1}$, such that  $\alpha_{p_{1}}$ is a side of $\Delta_{0}$, and $\alpha_{p_{j}}$ is a side of $\Delta_{j}$.   Let $a_{0}$ be the vertex of $\Delta_{0}$ which is opposite the side $\alpha_{p_{1}}$.  For $i = 1, \ldots, j$, let $a_{i}$ be the vertex of $\Delta_{i}$ which is opposite the side $\alpha_{p_{i}}$.   

We claim that $\beta = \overline{a_{0}a_{j}}$ is the desired diagonal.   Observe that all of the triangles $\Delta_{0}, \ldots, \Delta_{j}$ are distinct; otherwise, there would be three diagonals from the list $\alpha_{p_{1}}, \ldots, \alpha_{p_{j}}$ as sides of a triangle, which would mean that the vertices $p_{1}, \ldots, p_{j}$ induce a cycle in $Q^{0}$.  By construction, the total set of vertices from the triangles $\Delta_{0}, \ldots, \Delta_{j}$  contains at most $j + 3$ vertices.  Any triangulation of $P =  \mbox{Conv}(\Delta_{0} \cup \ldots \cup \Delta_{j})$ has at most $j + 1$ triangles, so it follows that $\Delta_{0}, \ldots, \Delta_{j}$ is a triangulation of $\mbox{Conv}(\Delta_{0} \cup \ldots \cup \Delta_{j})$. Thus, $P = \Delta_{0} \cup \ldots \cup \Delta_{j}$ is convex.  This means that the only diagonals from $\mathcal{T}^{0}$ that $\beta$ can intersect are in the list $\alpha_{p_{1}}, \ldots, \alpha_{p_{j}}$.  To show that these are exactly the diagonals from $\mathcal{T}^{0}$ which $\beta$ intersects, consider the diagonals of $\mathcal{T}^{0}$ which $\beta$ passes through as one moves from the endpoint $a_{0}$ to the other endpoint $a_{j}$.  If $j = 1$, then $\beta$ intersects only the diagonal $\alpha_{p_{1}}$.  Otherwise, $\beta$ intersects the interior of $\Delta_{1}$, and passes through the interior of another side of $\Delta_{1}$ different from $\alpha_{p_{1}}$.  This side must be $\alpha_{p_{2}}$, since the side of $\Delta_{1}$ different from $\alpha_{p_{1}}$ and $\alpha_{p_{2}}$ is on the boundary of $P$.  Continuing this argument, it is easy to show that $\beta$ intersects $\alpha_{p_{1}}, \ldots, \alpha_{p_{j}}$.

Suppose that the triangulation $\mathcal{T}^{0}$ is mutated in directions $p_{1}, \ldots, p_{j}$, and call the resulting sequence of triangulations $\mathcal{T}^{1}, \ldots, \mathcal{T}^{j}$.  One verifies by induction that when $\mathcal{T}^{i - 1}$ is mutated to $\mathcal{T}^{i}$, the diagonal $\alpha_{p_{i}}$ is flipped to $\overline{a_{0}a_{i}}$.  The assertion about the denominator vector follows from Proposition \ref{prop:cluster-vars-trees}.  \end{proof}  

\begin{remark} The final assertion in Lemma \ref{lemma:diagonals} about the denominator vector is proven in greater generality for all triangulated surfaces in \cite[Theorem 8.6]{triangulated}.
\end{remark}

\begin{example} Let $\mathcal{T}^{0}$ be the triangulation of $\mathbb{P}_8$ given in Figure \ref{fig:T0}.   The quiver $Q^{0} = Q(\mathcal{T}^{0})$ of type $\mbox{A}_{5}$ is given below.

\hspace{5cm} \begin{xy} 0;<1pt,0pt>:<0pt,-1pt>:: 
(0,45) *+{1} ="0",
(45,45) *+{2} ="1",
(68,0) *+{3} ="2",
(92,45) *+{4} ="3",
(115,0) *+{5} ="4",
"0", {\ar"1"},
"1", {\ar"2"},
"3", {\ar"1"},
"2", {\ar"3"},
"4", {\ar"2"},
\end{xy}

\vspace{0.5cm}

Then the diagonals $\overline{af}$ and $\overline{dg}$ correspond to cluster variables in $\prinA(B^{0}, t_{0})$ with denominator vectors $\ee_{1} + \ee_{2} + \ee_{3} + \ee_{5}$ and $\ee_{3} + \ee_{4}$, respectively.

\begin{figure}
  \includegraphics[width=3in]{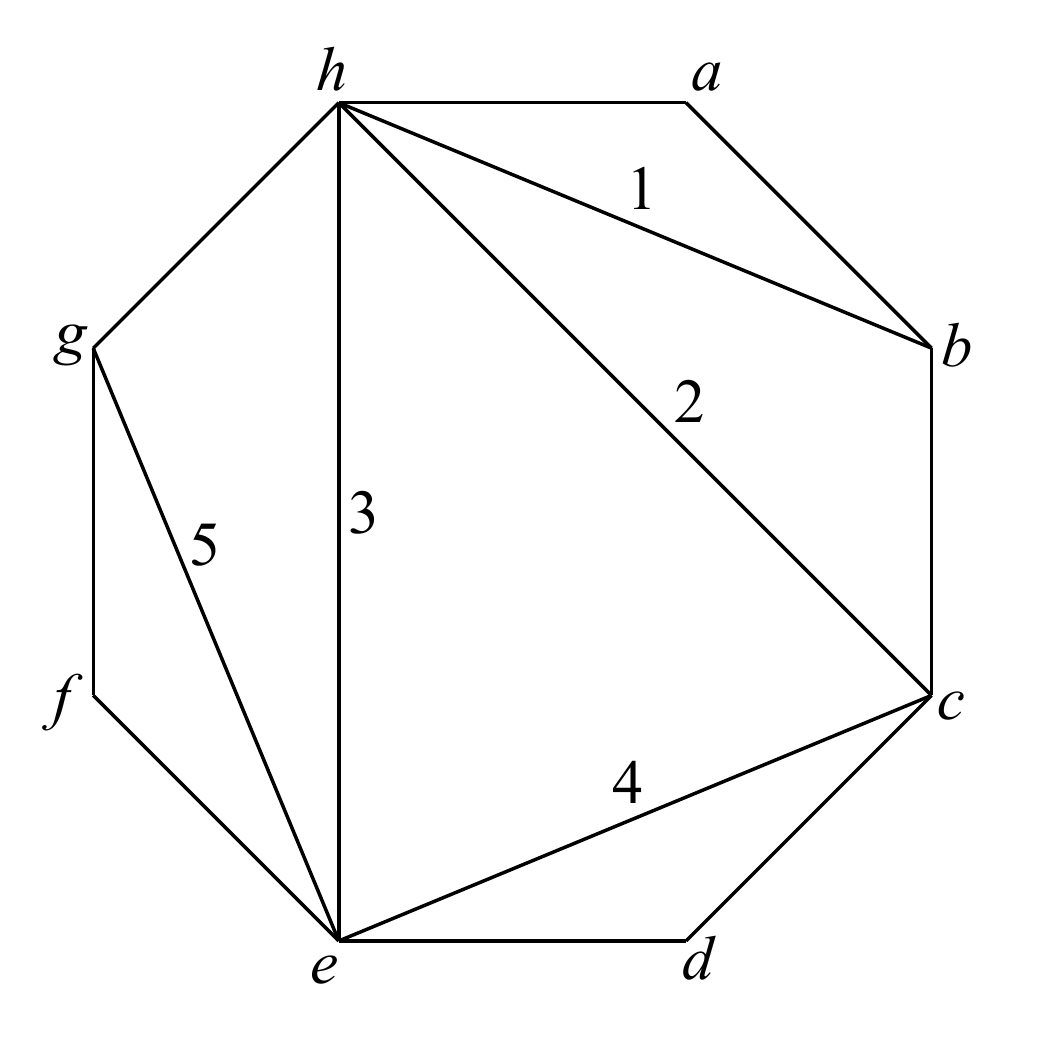}
  \caption{}
  \label{fig:T0}
\end{figure}
\end{example}

\begin{proposition} \label{typeA-gvec}The $\gg$-vector of the cluster variable corresponding to the set $C  \in \Phi_{+}(B^{0})$ is $(g_{1}, \ldots, g_{n})$, where 
\begin{itemize}
\item if $k \in C$, then $g_{k} = |  I_{\mbox{\tiny{out}}}(k) | - 1$;
\item if $k \notin C$, the subgraph of $Q^{0}$ induced by  $C \cup \{ k \}$ is a chain, and $k \rightarrow j$ in $Q^{0}$ for some $j \in C$, then $g_{k} = 1$;
\item otherwise, $g_{k} = 0$.
\end{itemize}
\end{proposition}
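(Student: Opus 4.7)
The plan is to apply Proposition \ref{thm:gvec-trees2}, which gives
\[
g_k \;=\; |I_{\mathrm{out}}(k)| \;-\; \rank(\gamma_k) \;-\; \dim M(k),
\]
where $\dim M(k) = 1$ if $k \in C$ and $0$ otherwise. I will treat the three bullets as separate cases, computing $|I_{\mathrm{out}}(k)|$ and $\rank(\gamma_k)$ directly using Lemma \ref{lemma:gamma-k} together with the chain structure of $C$ and the type-$\mbox{A}_n$ constraints supplied by Lemma \ref{quiverlemma1}.

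For $k \in C$ I first argue $\rank(\gamma_k)=0$. Since $C$ induces a chain in $Q^{0}$, every $C$-neighbor of $k$ is one of its (at most two) chain-neighbors, and any directed path in $C$ from $j$ to $i$ with $j,i$ on opposite sides of $k$ in the chain must pass through $k$. A path starting at $j \in I_{\mathrm{out}}(k)$ would need to begin with an arrow $j \to k$, contradicting the fact that $k \to j$. So no qualifying paths exist, and $g_k = |I_{\mathrm{out}}(k)| - 1$. In the second bullet I show that $k$ has a unique neighbor in $C$: otherwise two edges from $k$ to $C$ together with the chain path in $C$ between their endpoints would produce a cycle in the induced subgraph on $C \cup \{k\}$, contradicting that this subgraph is a chain. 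The unique neighbor must be the $j$ from the hypothesis, giving $I_{\mathrm{out}}(k) = \{j\}$, $I_{\mathrm{in}}(k) = \emptyset$, and $g_k = 1$.

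The ``otherwise'' case is the most involved. If $k$ has no edges to $C$, or if $C \cup \{k\}$ is a chain with $k$ only receiving an arrow from $C$, then $|I_{\mathrm{out}}(k)| = 0$ and $g_k = 0$. Otherwise $C \cup \{k\}$ is not a chain, so $k$ has $r \geq 2$ neighbors in $C$, say $c_{i_1}, \ldots, c_{i_r}$ with $i_1 < \cdots < i_r$. For each consecutive pair in this list, the cycle $c_{i_s} - c_{i_s+1} - \cdots - c_{i_{s+1}} - k - c_{i_s}$ is induced: no chord lies in the chain $C$, and $k$ has no neighbor strictly between $c_{i_s}$ and $c_{i_{s+1}}$ by the choice of consecutive indices. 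Lemma \ref{quiverlemma1}(1) forces $i_{s+1} = i_s + 1$, so the neighbors of $k$ in $C$ form a consecutive run $c_a, c_{a+1}, \ldots, c_{a+r-1}$.

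The main obstacle is ruling out $r \geq 3$. For this I would invoke the triangulation model of Lemma \ref{lemma:diagonals}: the 3-cycles $\{k, c_a, c_{a+1}\}$ and $\{k, c_{a+1}, c_{a+2}\}$ of $Q^{0}$ would correspond to two distinct triangles of $\mathcal{T}^{0}$ each containing the diagonals $\alpha_k$ and $\alpha_{c_{a+1}}$, but in any triangulation two diagonals can share at most one triangle (the two would-be triangles share a common pair of vertices along two shared sides, forcing them to coincide). Hence $r = 2$ and $\{k, c_s, c_{s+1}\}$ is an oriented 3-cycle. The orientation of the chain edge between $c_s$ and $c_{s+1}$ then determines which of the two lies in $I_{\mathrm{out}}(k)$ and which lies in $I_{\mathrm{in}}(k)$, and that same chain edge supplies the directed path demanded by Lemma \ref{lemma:gamma-k}, yielding $|I_{\mathrm{out}}(k)| = \rank(\gamma_k) = 1$ and hence $g_k = 0$.
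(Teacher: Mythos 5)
Your proof is correct and follows essentially the same route as the paper: both reduce the claim to Proposition \ref{thm:gvec-trees2} and then use the structure of type $\mbox{A}_{n}$ quivers (Lemma \ref{quiverlemma1}) to analyze the neighbors of $k$ in $C$ case by case, and your use of the rank formula together with Lemma \ref{lemma:gamma-k} is equivalent to the paper's direct computation of $\dim\ker(\gamma_{k})$. The only divergence is in excluding three or more neighbors of $k$ in $C$, where you pass to the triangulation model (implicitly using the standard fact that oriented $3$-cycles of $Q(\mathcal{T}^{0})$ correspond to triangles of $\mathcal{T}^{0}$ with all sides diagonals), whereas the paper stays in the quiver and deduces the same exclusion from the degree conditions (3)--(4) of Lemma \ref{quiverlemma1}; both arguments are valid.
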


\begin{remark} Proposition \ref{typeA-gvec} was independently stated and proven in \cite{mw}, and generalized to other classical types.  For finite type, formulas for (nonquantum) $F$-polynomials and $\gg$-vectors were given in \cite{coefficients} in the bipartite case and in \cite{yangzel} for the acyclic case.
\end{remark}

\begin{proof} The first part follows from Proposition \ref{thm:gvec-trees}.  Using Proposition \ref{thm:gvec-trees2} and the notation preceding it (with $T$ replaced by $C$), it suffices to compute $g_{k} = \dim(\ker(\gamma_{k}))$ for $k \in [1, n] - C$.  Fix such an index $k$. 

First, assume that there exists $j \in C$ such that $k \rightarrow j$ in $Q^{0}$, and the subgraph of $Q^{0}$ induced by $C \cup \{k \}$ is a chain. Then $M_{\mbox{\tiny{in}}}(k) = 0$, so $\ker(\gamma_{k}) = M_{\mbox{\tiny{out}}}(k) = M(j)$, which means that $\dim(\ker(\gamma_{k})) = 1$.

Next, suppose that there exists $j \in C$ such that $k \rightarrow j$ in $Q^{0}$, and the subgraph of $Q^{0}$ induced by $C \cup \{k \}$ is not a chain (which means that it contains a cycle by Lemma \ref{quiverlemma1}).  By the same lemma, the only induced cycles in $Q^{0}$ are directed 3-cycles, so it follows that there exists $p \in C$ such that $k \rightarrow j \rightarrow p \rightarrow k$ in $Q^{0}$.   It may also be deduced from the same lemma that  there is no edge between $k$ and another vertex $j' \in C - \{ j, p \}$.  Thus, $M_{\rm out}(k) = M(j)$ and $M_{\rm in}(k) = M(p)$.  Since $\gamma_{k} $ is an isomorphism between $M(j)$ and $M(p)$,   it follows that  $\dim(\ker(\gamma_{k})) = 0$.

Finally, suppose that there is no $j \in C$ such that $k \rightarrow j$ in $Q^{0}$.  Then $M_{\mbox{\tiny{out}}}(k) = 0$, so $\ker(\gamma_{k}) = 0$.  
\end{proof}

\begin{example}
Let $B^{0}$ be the following initial exchange matrix of type $\mbox{A}_{4}$:
\begin{eqnarray}
B^{0} = \left( \begin{array}{rrrr}
0 & 1 & -1 & 0 \\
-1 & 0 & 1 & 0 \\
1 & -1 & 0 & -1 \\
0 & 0 & 1 & 0
\end{array} \right)
\end{eqnarray}
Then $Q^{0} = Q(B^{0})$ is the quiver below:

\hspace{5cm} \begin{xy} 0;<1pt,0pt>:<0pt,-1pt>:: 
(0,36) *+{1} ="0",
(25,0) *+{2} ="1",
(50,36) *+{3} ="2",
(100,36) *+{4} ="3",
"1", {\ar"0"},
"0", {\ar"2"},
"2", {\ar"1"},
"2", {\ar"3"},
\end{xy}

\vspace{0.5cm}
 Also, let $D = 2I_{4}$, where $I_{4}$ is the $4 \times 4$ identity matrix.  The complete list of denominator vectors corresponding to cluster variables not in the initial cluster is given in Table \ref{table:F-polys}.  The corresponding $\gg$-vectors and quantum $F$-polynomials are computed using Proposition \ref{typeA-gvec} and Theorem \ref{thm:quantum-fpoly-trees}.  To obtain the $F$-polynomial, plug $q = 1$ and $Z_{i} = u_{i}$ for $i \in [1, 4]$ into the corresponding quantum $F$-polynomial.

\end{example}

\begin{table}
\begin{equation*}
\begin{array}{|c|c|c|}
\hline
&&\\[-4mm]
\mbox{Denominator} & \gg\mbox{-vector}& \mbox{Quantum } \\
\mbox{vector} &    &  F\mbox{-polynomial} \\
\hline
&&\\[-4mm]
\ee_{1} & -\ee_{1} + \ee_{2} &  qZ_{1} + 1 \\[1mm]
\hline
&&\\[-4mm]
\ee_{2} & \ee_{3} - \ee_{2} & qZ_{2} + 1 \\[1mm]
\hline
&&\\[-4mm]
\ee_{3} & \ee_{1} - \ee_{3} &  qZ_{3} + 1 \\[1mm]
\hline 
&&\\[-4mm]
\ee_{4} & \ee_{3} - \ee_{4} &  qZ_{4} + 1 \\[1mm]
\hline
&&\\[-4mm]
\ee_{1} + \ee_{2} & -\ee_{1} &  qZ^{\ee_{1} + \ee_{2}} + qZ_{1} + 1 \\[1mm]
\hline
&&\\[-4mm]
\ee_{1} + \ee_{3} & -\ee_{3} &  qZ^{\ee_{1} + \ee_{3}} + qZ_{3} + 1 \\[1mm]
\hline 
&&\\[-4mm]
\ee_{2} + \ee_{3} & -\ee_{2} &  qZ^{\ee_{2} + \ee_{3}} + qZ_{2} + 1 \\[1mm] 
\hline
&&\\[-4mm]
\ee_{3} + \ee_{4} & \ee_{1} - \ee_{4} &  qZ^{\ee_{3} + \ee_{4}} + qZ_{4} + 1 \\[1mm]
\hline
&&\\[-4mm]
\ee_{1} + \ee_{3} + \ee_{4} &  - \ee_{4} &  qZ^{\ee_{1} + \ee_{3} + \ee_{4}} + qZ^{\ee_{3} + \ee_{4}}  + qZ_{4} + 1 \\[1mm]
\hline
&&\\[-4mm]
\ee_{2} + \ee_{3} + \ee_{4} & -\ee_{2} + \ee_{3} - \ee_{4} & qZ^{\ee_{2} + \ee_{3} + \ee_{4}} + q^{2}Z^{\ee_{2} + \ee_{4}} + qZ_{2} + qZ_{4} + 1 \\[1mm]
\hline
\end{array}
\end{equation*}
\caption{}
\label{table:F-polys}
\end{table}

\section*{Acknowledgments}
The author would like to thank her advisor Andrei Zelevinsky for his many helpful comments and suggestions.

\end{document}